\DeclareMathAlphabet{\mathpzc}{OT1}{pzc}{m}{it}
\newcommand{\SG}{\mathrm{S}}
\newtheoremstyle{notes} {} {} {} {} {\bfseries} {.} {.5em} {}
\theoremstyle{plain}
\newtheorem{prop}[subsubsection]{Proposition}
\newtheorem{lemma}[subsubsection]{Lemma}
\newtheorem{thm}[subsubsection]{Theorem}
\newtheorem{conj}[subsubsection]{Conjecture}
\theoremstyle{remark}
\newtheorem{rem}[subsubsection]{Remark} 
\newtheorem{ddef}[subsubsection]{Definition}
\newcommand{\charr}{\mathrm{char}}
\newcommand{\Dist}{\mathrm{Dist}}
\newcommand{\cc}{\mathbf{c}}
\newcommand{\ev}{\mathrm{ev}}
\newcommand{\tto}{\twoheadrightarrow}
\newcommand{\mZ}{\mathbb{Z}}
\newcommand{\mF}{\mathbb{F}}
\newcommand{\mN}{\mathbb{N}}
\newcommand{\cO}{\mathcal{O}}
\newcommand{\op}{\mathrm{op}}
\newcommand{\OSp}{{\mathsf{OSp}}}
\newcommand{\GL}{{\mathsf{GL}}}
\newcommand{\Pe}{{\mathsf{Pe}}}
\newcommand{\Sp}{{\mathsf{Sp}}}
\newcommand{\OB}{\mathcal{O}\hspace{-0.3mm}\mathcal{B}}
\title[Ringel duality for Brauer algebras]{Ringel duals of Brauer algebras via super groups}
\author{Kevin Coulembier}
\newcommand{\End}{{\rm End}}
\newcommand{\mk}{\Bbbk}
\newcommand{\mC}{\mathbb{C}}
\newcommand{\Hom}{{\rm Hom}}
\newcommand{\Rep}{{\rm Rep}}
\newcommand{\Ext}{{\rm Ext}}
\newcommand{\unit}{{\mathbbm{1}}}
\newcommand{\cC}{{\mathbf{C}}}
\newcommand{\Ob}{{\mathrm{Ob}}}
\newcommand{\oa}{\bar{0}}
\newcommand{\ob}{\bar{1}}
\newcommand{\dd}{\mathbf{d}}
\newcommand{\Grp}{\mathbf{G\hspace{-0.3mm}r\hspace{-0.3mm}p}}
\newcommand{\Aut}{\mathrm{Aut}}
\newcommand{\cA}{\mathcal{A}}
\newcommand{\Sym}{\mathrm{Sym}}
\newcommand{\id}{{\rm id}}
\newcommand{\Frg}{\mathrm{Frg}}
\newcommand{\svec}{\mathbf{svec}}
\newcommand{\csAl}{\mathbf{cs\hspace{-0.3mm}Al}}
\newcommand{\sVec}{\mathbf{s\hspace{-0.4mm}Vec}}
\newcommand{\VVec}{\mathbf{Vec}}
\newcommand{\vvec}{\mathbf{vec}}
\newcommand{\uHom}{\underline{\mathrm{Hom}}}
\newcommand{\uEnd}{\underline{\mathrm{End}}}
\newcommand{\uCEnd}{\underline{\mathrm{C\hspace{-0.2mm}End}}}
\newcommand{\uRep}{\underline{\mathrm{Rep}}}
\newcommand{\usvec}{\underline{\mathbf{svec}}}
\newcommand{\GG}{{\mathsf{G}}}
\newcommand{\Lie}{\mathrm{Lie}}
\newcommand{\mI}{\mathbb{I}}
\newcommand{\sdim}{\mathrm{sdim}}
\newcommand{\cB}{\mathcal{B}}
\newcommand{\cBC}{\mathcal{BC}}
\newcommand{\cOB}{\mathcal{O}\hspace{-0.5mm}\mathcal{B}}
\newcommand{\cOBC}{\mathcal{O}\hspace{-0.5mm}\mathcal{B}\mathcal{C}}
\newcommand{\cR}{\mathcal{R}}
\newcommand{\cH}{\mathcal{H}}
\newcommand{\cS}{\mathcal{S}}
\newcommand{\OG}{\mathsf{O}}
\newcommand{\ad}{\mathrm{ad}}
\newcommand{\JJJ}{\mathscr{J}}
\newcommand{\bA}{{\mathbf{A}}}
\newcommand{\lbr}{[\![}
\newcommand{\rbr}{]\!]}
\keywords{centraliser coalgebra, Ringel duality, diagram algebras, algebraic super groups, fundamental theorems of invariant theory, double centraliser properties, Schur algebras}
\subjclass[2010]{}
\begin{document} 
\date{}

\maketitle 



\begin{abstract}
We prove that the Brauer algebra, for all parameters for which it is quasi-hereditary, is Ringel dual to a category of representations of the orthosymplectic super group. As a consequence we obtain new and algebraic proofs
for some results on the fundamental theorems of invariant theory for this super group over the complex numbers and also extend them to some cases in positive characteristic. Our methods also apply to the walled Brauer algebra in which case we obtain a duality with the general linear super group, with similar applications.
\end{abstract}

\section*{Introduction}

We apply Ringel duality to the representation theory of the classical algebraic super groups. Some of the motivation and inspiration for this study originates in the recent construction of the abelian envelope of Deligne's universal monoidal category of \cite{Deligne} in \cite{EHS} via general linear super groups. In the background of the construction in \cite{EHS} one has Ringel duality between finite truncations of the abelian envelope and the original category of Deligne. Recently an infinite version of Ringel duality has been developed rigorously in \cite{BSRingel} which allows to speak of Ringel duality between the actual abelian envelope and the original category of Deligne.

In the current paper, we provide an alternative proof for the truncated duality, for which we do not apply any serious representation theory of super groups. Instead we rely on the theory of Brauer algebras (the endomorphism algebras of objects in Deligne's category) and the classical construction of Schur algebras out of affine group schemes. This also allows us to extend this Ringel duality to positive characteristics and to the orthosymplectic super group, in which cases the results are new. The Ringel dualities we establish allow to transfer information between the representation theory of Brauer algebras and super groups.
The results also have applications to invariant theory of super groups. In particular we re-establish many cases of the first and second fundamental theorem of invariant theory as obtained in e.g.~\cite{Sel, DLZ, ES, Kujawa, BrCat, Yang} in characteristic zero, now with purely algebraic proofs, and extend them to positive characteristic.

The paper is organised as follows. In Sections~\ref{SecGrp} and~\ref{DiagCat} we recall the relevant notions of super algebra and diagram categories. In Section~\ref{SecCOAL} we develop some theory of centraliser coalgebras for representations of small categories. The goal is to prove that the centraliser algebra of the Brauer algebra acting on tensor powers is given by a Schur type algebra coming from the relevant super group. The study of centraliser algebras via coalgebras and Schur algebras is a classical method, see \cite{Green}. However, in order to obtain a rigorous proof which is computation free and applicable to all classical super groups we propose a method which involves the monoidal structure of the relevant Deligne category, rather than relying solely on the Brauer algebra itself. 

In Section~\ref{SecSchur} we apply the theory of Section~\ref{SecCOAL} to the Brauer, the walled Brauer, the periplectic Brauer and the Brauer-Clifford algebra. Furthermore, we identify the abelian subcategory of the category of algebraic representations of the corresponding super group which is described by the centraliser algebra (the Schur algebra).

In Section~\ref{SecRingel} we show that, under certain restrictions on the parameters, the centraliser algebras of the (walled) Brauer algebra as described in Section~\ref{SecSchur} are precisely the Ringel duals of the Brauer algebras. In particular, this establishes a Ringel duality between the (walled) Brauer algebra and a category of representations of a super group, for all parameters for which the Brauer algebra is quasi-hereditary.

In Section~\ref{SecApp} we describe some applications of our results to the representation theory and invariant theory of super groups. In Appendix~\ref{AppRingel} we summarise some elementary properties of quasi-hereditary algebras and Ringel duality which are used in the main part of the paper.

\subsection*{Notation} Throughout the paper, $\mk$ denotes an algebraically closed field of characteristic $p\ge 0$. We will often leave out $\mk$ in subscripts. We let $\mI$ denote the image of $\mZ$ in $\mk$, so $\mI\simeq\mZ$ if $p=0$ and $\mI\simeq\mF_p$ if $p>0$.
We set $\mN=\{0,1,2,\cdots\}$ and denote the cyclic group of order two by $\mZ_2=\mZ/2\mZ=\{\oa,\ob\}$. For $a,b\in\mZ$, we denote by $[\![a,b]\!]$ the set of integers $x$ with $a\le x\le b$.

For $n\in\mN$, we denote the symmetric group on $n$ symbols by $\SG_n$. For $\lambda\vdash n$, we have the corresponding Specht module $S(\lambda)$ of $\mk\SG_n$. We will only consider characteristics of $\mk$ where the Specht modules are simple. For $r\in\mN$, we set 
$$\JJJ(r)\;=\;\{r-2i\,|\, 0\le i\le r/2\}\;\subset\,\mN\quad\mbox{and}\quad\Lambda_r:=\{\lambda\vdash i\,|\, i\in\JJJ(r)\}.$$
For $(r,s)\in\mN\times\mN$, we will also use the following set of bipartitions
$$\Lambda_{r,s}:=\{(\lambda,\mu)\;|\;\lambda\vdash r-j, \,\mu\vdash s-j \;\mbox{for some}\; 0\le j\le \min(r,s)\}.$$

For a finite dimensional associative algebra $A$, we denote its category of finite dimensional left modules by $A$-mod. 
As usual, a subcategory where the morphism sets are identical to those in the original category is called a full subcategory. Dually, a subcategory with same set of objects as the original category will be called a dense subcategory.

\section{Affine super group schemes}
\label{SecGrp}
\subsection{Elementary super algebra}
Super algebra corresponds to algebra in the symmetric monoidal category $\svec_{\mk}$.

\subsubsection{Vector spaces} We denote by $\svec_{\mk}$ the category of finite dimensional $\mZ_2$-graded vector spaces. Morphisms in this category are the grading preserving $\mk$-linear homomorphisms. For super spaces $V$ and $W$ we denoted the space of such morphisms by $\Hom_{\mk}(V,W)$. The category $\svec$ is $\mk$-linear and monoidal. For $v\in V_{i}$, with $i\in\{\oa,\ob\}$ we write $|v|=i$. The parity change functor $\Pi$ satisfies $(\Pi V)_{i}=V_{i+\ob}$.
The monoidal category of all $\mZ_2$-graded vector spaces is denoted by $\sVec_{\mk}$.
We consider $\svec_{\mk}$ and $\sVec_{\mk}$ as symmetric monoidal categories with braiding
$$\gamma_{V,W}: \;V\otimes W\,\stackrel{\sim}{\to}\, W\otimes V,\quad v\otimes w\mapsto (-1)^{|w||v|}w\otimes v.$$
Definitions as the above extend uniquely from homogeneous elements by linearity. By a ``form'' on a super vector space $V$ we will always mean a non-degenerate bilinear symmetric (with respect to $\gamma_{V,V}$) form. Such a form is even, resp. odd, if $\langle v,w\rangle=\oa$ when $|u|+|v|=\ob$, resp. $|u|+|v|=\oa$.

\subsubsection{}For $V\in\svec$, we denote by $\dim V\in\mN\times\mN$ the pair $(m,n)$, with $m$, resp. $n$, the ordinary dimension of $V_{\oa}$, resp. $V_{\ob}$. By $\sdim V\in\mI$ we denote the categorical dimension in $\svec$. Concretely, $\sdim V$ is the image of $m-n$ in $\mk$. When working with such a fixed vector space, we use the function
$$
[\cdot]: \;\lbr1,m+n\rbr \,\to\,\mZ_2,\quad\mbox{with}\quad
[i]\;=\;\begin{cases}
\oa&\mbox{  for $ i\in\lbr1, m\rbr$},\\
\ob&\mbox{ for $ i\in\lbr m+1, m+n\rbr$.}
\end{cases}$$
When we denote a basis of $V$ by $\{e_i\}$, we assume that $e_i\in V_{[i]}$.

The monoidal category $\sVec$ has internal homomorphisms, denoted by
$$\uHom(V,W),\quad\mbox{with} \quad \uHom(V,W)_{\oa}=\Hom(V,W)\quad\mbox{and}\quad \uHom(V,W)_{\ob}=\Hom(V,\Pi W).$$
For $V\in\sVec$, we set $V^\ast=\uHom(V,\mk)$.

\subsubsection{Algebras}
A super algebra $(A,m,\eta)$ is a monoid in $\sVec$. As this does not involve the braiding, this is just a $\mZ_2$-graded algebra. However the definition of the tensor product $A\otimes B$ of two algebras involves the braiding.
Similarly, a commutative super algebra is a commutative monoid in $\sVec$. 
We denote the category of commutative super algebras by $\csAl_{\mk}$.

 By definition, an $A$-module corresponds to an $M\in\sVec$ with a super algebra morphism
$A\to\uEnd(M).$ The superspace $\uHom_A(M,N)$ consists of $f\in \uHom(M,N)$ which satisfy $f(av)=(-1)^{|a||f|}af(v)$, for homogeneous $a\in A$ and $v\in M$. It is easy to see that we get an isomorphic super space when we do not impose a minus sign in the commutation relation.

\subsubsection{Categories}\label{SupCat} Super categories and super functors are generalisations of super algebras and super modules in the same way as $\mk$-linear categories and functors are generalisations of algebras and modules. Super categories and super functors are thus enriched over the monoidal category $\sVec$. We will often interpret a super category with finitely many objects simply as a super algebra (with some distinguished idempotents).

A monoidal super category is defined similarly as a $\mk$-linear monoidal category, but based on the super interchange law
$$(f\otimes g)\circ (h\otimes k)\;=\; (-1)^{|g||h|} (f\circ h\otimes g\circ k).$$
We refer to \cite{BE} for a complete treatment of monoidal super categories.
 An example of a monoidal supercategory is the category $\usvec$ which has the same objects as $\svec$, but morphism superspaces given by the internal morphism spaces. 
 As a manifestation of the super interchange law, $f\otimes g$ for two homogeneous morphisms $f,g$ in $\usvec$ has to be interpreted as
 $$(f\otimes g)(v\otimes w)\;=\; (-1)^{|g||v|} f(v)\otimes g(w),$$ 
 with $v,w$ homogeneous elements in the relevant super spaces.

\subsubsection{Coalgebras} A super coalgebra $(C,\Delta,\varepsilon)$ is a comonoid in $\sVec$, see e.g. \cite[\S 2.1.1]{Abe}. We denote by scom-$C$, the category of comodules in $\svec$. By definition, a comodule is a finite dimensional super vector space $M$ with a morphism
$$c_M:\, M\to M\otimes C \qquad\mbox{in $\sVec$},$$
  such that
  $$(\id_M\otimes \varepsilon)\circ c_M=\id_M\qquad\mbox{and}\qquad (\id_M\otimes \Delta)\circ c_M=(c_M\otimes \id_C)\circ c_M.$$
  
  When we consider the category of comodules in $\svec$ of $C$, but with all (not necessarily grading preserving) morphisms which commute with the coaction, we write $\underline{{\rm scom}}$-C. The category $\underline{{\rm scom}}$-$C$ is a super category, but not necessarily abelian. Denote by com-$C$ the category of finite dimensional comodules of $C$, regarded as an ordinary coalgebra. By definition, we have forgetful functors
\begin{equation}\label{eqFrg}\Frg:\;\mbox{scom-}C\;\to\;\mbox{com-}C\quad\mbox{and}\quad \underline{\Frg}:\;\underline{{\rm scom}}\mbox{-}C\;\to\;\mbox{com-}C,
\end{equation} where the latter is fully faithful.

\subsubsection{Matrix coefficients}\label{defCM} Fix a coalgebra $C$ in $\sVec$. For $M$ in scom-$C$, we have a coalgebra morphism
\begin{equation}\label{eqMC}[-|-]_C:\;M^\ast\otimes M\to C,\quad \alpha\otimes v\mapsto  [\alpha|v]_C\;:=\; (\alpha\otimes \id_C)\circ c_M(v). \end{equation}
We denote by $C^M$ the image of the above morphism.
By definition, $C^M$ is a subcoalgebra of $C$ and $M$ restricts to a comodule of $C^M$. 
  For a short exact sequence in scom-$C$  $$0\to M_1\to M\to M_2\to 0,$$
  we have $C^M\supset C^{M_1}+C^{M_2}$, $C^{M^{\oplus n}}=C^M$ and $C^{\Pi M}=C^M$.

For any finite dimensional subcoalgebra $B$ of $C$, we can therefore consider scom-$B$ canonically as the abelian subcategory of scom-$C$ of all comodules which are subcomodules of direct sums of the $C$-comodules $B$ and $\Pi B$.

\subsubsection{}\label{dualA} For a coalgebra $(C,\Delta,\varepsilon)$ in $\sVec$, we have the dual algebra $(C^\ast,m,\eta)$, with
$$m(\alpha,\beta)(x):=(\alpha\otimes\beta)\circ\Delta(x), \quad\mbox{for $\alpha,\beta\in C^\ast$ and $x\in C$, and } \eta(\lambda):=\lambda\varepsilon.$$
We could also define the dual with $m$ twisted by $\gamma$ in $\sVec$. Then taking the dual would intertwine the functors forgetting the $\mZ_2$-grading, but would interchange comodule structures on a space $V$ with module structures on $V^\ast$, rather than on $V$.


\subsubsection{Hopf algebras} 
A super bialgebra is a tuple $(A,m,\eta,\Delta,\varepsilon)$ in $\sVec$ such that $(A,m,\eta)$ is a monoid and $(C,\Delta,\varepsilon)$ is a comonoid for which $\Delta$ and $\varepsilon$ are algebra morphisms (in $\sVec$), see \cite[Theorem~2.1.1]{Abe}. 
If we additionally have an antipode $S$, see \cite[\S 2.1.2]{Abe}, then 
 $(A,m,\eta,\Delta,\varepsilon,S)$ is a Hopf super algebra. The antipode $S$ is unique if it exists.
 We say that a Hopf super algebra $A$ is commutative if $(A,m,\eta)$ is commutative in $\sVec$.

For a Hopf super algebra $A$ and two comodules $M,N$ the comodule structure on $M\otimes_{\mk}N$ is given by
\begin{equation}\label{eqTP}M\otimes N\stackrel{c_M\otimes c_N}{\to}M\otimes C\otimes N\otimes C\stackrel{M\otimes \gamma_{C,N}}{\to}M\otimes N\otimes C\otimes C\stackrel{M\otimes N\otimes m}{\to}M\otimes N\otimes C.\end{equation}

The comodule structure on the dual space of a comodule $M$ is defined by
$$c_{M^\ast}(e_i^\ast)=\sum_j (-1)^{[j]([i]+[j])}e^\ast_j\otimes S(c_{ij}),$$
with $\{e_i\}$ a basis of $M$ with $c_M(e_j)=\sum_ie_i\otimes c_{ij}$.

\subsection{Algebraic super groups}
By the Yoneda embedding, we have an equivalence between the category of commutative Hopf super algebras and the opposite of the category of representable functors
$$\csAl_{\mk}\to \Grp.$$
The latter are known as affine super group schemes, or simply algebraic super groups in case the corresponding Hopf super algebra is finitely generated as an algebra. 

Now fix a finitely generated commutative Hopf super algebra $\cO$ and set
 $\GG={\csAl}(\cO,-)$ and
$$\Rep\GG\;:=\;\mbox{scom-}\cO\quad\mbox{and}\quad \uRep\GG\;:=\;\underline{{\rm scom}}\mbox{-}\cO.$$

\subsubsection{} We set $\cO_+=\ker(\varepsilon)$, with $\varepsilon$ the counit of $\cO$, and
$$\Dist(\cO):= \bigcup_{i>0}(\cO/(\cO_+)^i)^\ast\;\subset\;\cO^\ast.$$
Then $\Dist(\cO)$ inherits a Hopf algebra structure from $\cO$, see \cite[\S 2.3.5]{Abe}, as an extension of the procedure in \ref{dualA}. 
We define $\Lie(\GG)$ as the space of primitive elements in $\Dist(\cO)$: 
$$\Lie(\GG)\;:=\{d\in \cO^\ast\,|\, d(fg)= d(f)\varepsilon(g)+\varepsilon(f)d(g),\;\mbox{for all $f,g\in\cO$}\}.$$
We interpret $\Lie(G)$ as a Lie super algebra for multiplication given by the supercommutator in the associative algebra $\Dist(\cO)$.

\begin{lemma}\label{LemDist}\cite[Remark~2(2) and Lemma~19]{Masuoka}
\begin{enumerate}[(i)]
\item If $\charr(\mk)=0$, we have $\Dist(\cO)=U(\Lie \GG)$.
\item If $\GG_{ev}$ is connected, the natural pairing $\Dist(\cO)\times\cO\to\mk$ satisfies
$$u(f)\not=0\;\mbox{for some $u\in \Dist(\cO)$ for all $0\not=f\in\cO$.}$$
\end{enumerate}
\end{lemma}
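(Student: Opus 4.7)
My plan is to handle the two parts separately, but in both cases to reduce to the classical non-super setting by exploiting the supercommutativity of $\cO$. Since $\cO$ is finitely generated and supercommutative, the ideal $J\subset \cO$ generated by $\cO_{\ob}$ is nilpotent (the finitely many odd generators square to zero and commute with everything up to sign), and the quotient $\cO/J$ is the coordinate Hopf algebra of the underlying ordinary affine group scheme $\GG_{ev}$. Dually, $\Dist(\cO)$ inherits a Hopf super algebra structure which is cocommutative (dual to the supercommutativity of $\cO$) and filtered by the finite-dimensional pieces $(\cO/\cO_+^i)^\ast$, the latter being finite-dimensional since $\cO_+$ is finitely generated as an ideal.

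For (i), the plan is to identify the above filtration as the coradical filtration of $\Dist(\cO)$ and invoke the super version of the Cartier--Milnor--Moore theorem. The coradical of $\Dist(\cO)$ is $\mk\cdot\varepsilon$ because $\cO/\cO_+\cong \mk$, and coradicality of the filtration then follows from a standard dualisation argument. In characteristic zero, Milnor--Moore (in its super form, cf.\ Scheunert) asserts that a connected, cocommutative, coradically filtered Hopf super algebra is the universal enveloping algebra of its Lie super algebra of primitives. Since by the very definition in the excerpt the primitive elements of $\Dist(\cO)$ are exactly $\Lie(\GG)$, this gives $\Dist(\cO)\simeq U(\Lie\GG)$ as Hopf super algebras. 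The only non-automatic input is the super Milnor--Moore theorem, which I would cite rather than re-prove.

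For (ii), assume $\GG_{ev}$ is connected and let $f\in\cO$ be nonzero. Let $n\ge 0$ be the maximal integer with $f\in J^n$, so the image $\bar f$ is nonzero in $J^n/J^{n+1}$. The associated graded $\mathrm{gr}_J\cO$ is an exterior algebra over $\cO/J$ on the odd cotangent space $J/J^2$, so $\bar f$ has a nonzero coefficient $h\in\cO/J$ for some monomial $\xi_{i_1}\cdots\xi_{i_n}$ in odd generators (an empty product if $n=0$). By the classical fact that the distribution algebra of a connected affine group scheme separates its coordinate ring, there exists $u_0\in\Dist(\cO/J)$ with $u_0(h)\neq 0$. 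Pulling $u_0$ back along $\cO\tto\cO/J$ and composing with the odd differential operators $\partial_{\xi_{i_n}}\cdots\partial_{\xi_{i_1}}\in\Dist(\cO)$ (products, in the dual algebra, of odd derivations at the identity, which automatically annihilate $J^{n+1}$) produces the required $u\in\Dist(\cO)$ with $u(f)\neq 0$.

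The main obstacle is verifying compatibility between the two filtrations in play — the $\cO_+^\bullet$-filtration defining $\Dist(\cO)$ and the $J^\bullet$-filtration — and constructing the odd derivations cleanly inside $\Dist(\cO)$. This rests on a PBW-type decomposition $\cO\simeq(\cO/J)\otimes\Lambda^\bullet(J/J^2)$ respecting the coalgebra structure to sufficient order, which in turn needs a compatible splitting of the projection $\cO\tto\cO/J$; this is essentially the content of Masuoka's Lemma~19 and leans on smoothness-type properties of the even part. Once this decomposition is in hand, the separation statement assembles from the two halves — classical separation for $\cO/J$ and elementary separation on the Grassmann factor — with minimal bookkeeping.
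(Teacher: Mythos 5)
The paper itself gives no proof of this lemma: it is quoted verbatim from Masuoka (the bracketed citation \cite[Remark~2(2) and Lemma~19]{Masuoka} is the entire justification), so there is no in-paper argument to compare yours against. What can be assessed is whether your sketch is a plausible route to the result.

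For part (i) your outline is correct and is essentially the standard argument. The pieces you need are all available: $\Dist(\cO)$ is a union of finite-dimensional coalgebras $(\cO/\cO_+^i)^\ast$, each with coradical $\mk$ (because $\cO_+/\cO_+^i$ is the Jacobson radical of $\cO/\cO_+^i$), so $\Dist(\cO)$ is conilpotent; super-cocommutativity is dual to supercommutativity of $\cO$; and the super Milnor--Moore/Kostant theorem over a field of characteristic zero then identifies $\Dist(\cO)$ with the enveloping algebra of its primitives, which by definition are $\Lie(\GG)$. Citing super Milnor--Moore rather than reproving it is exactly what a proof of this lemma should do.

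For part (ii) your reduction to the even part via the nilpotent ideal $J=(\cO_{\ob})$ is the right idea, and your observation that $J$ is nilpotent (because finitely many anticommuting square-zero generators force $J^{k+1}=0$) is correct. The genuine technical content, as you correctly flag, is producing the compatible splitting $\cO\simeq(\cO/J)\otimes\Lambda^{\bullet}(J/J^2)$ so that the odd ``partial derivatives'' can be defined inside $\Dist(\cO)$ and combined with even distributions. One small misattribution: in Masuoka's paper this tensor decomposition is not Lemma~19 itself --- Lemma~19 \emph{is} the separation statement you are trying to prove --- but rather the structure theorem for $\cO$ as an $\cO_{ev}$-module that underlies the Harish-Chandra-pair description; Lemma~19 is then the consequence. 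Also be aware that the ``classical fact'' you invoke (distributions separate points on a connected affine group scheme, i.e.\ $\bigcap_n\cO_+^n=0$) is slightly more delicate in positive characteristic, where connected does not imply reduced; it is still true, but deserves a pointer to Jantzen or a direct argument rather than being taken for granted. Modulo these citations your proposal is sound.
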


\subsubsection{}\label{Omod} Fix a $(M,c_M)\in \mbox{scom-}\cO$. 
This is naturally a $\Dist(\cO)$-module, for
$$\Dist(\cO)\to \uEnd(M),\quad u\mapsto  (id_M\otimes u)\circ c_M,$$
and thus by restriction also a $\Lie\GG$-module.

Now consider the finite dimensional coalgebra $\cO^M$ as in \ref{defCM}. 
We have the finite dimensional algebra $S^M:=(\cO^M)^\ast$ and an injective algebra morphism
$$S^M\hookrightarrow\uEnd(M),\quad \alpha\mapsto (\id_M\otimes \alpha)\circ c_M. $$
By construction, the above maps yield a commutative diagram of algebra morphisms
\begin{equation}\label{CommD}\xymatrix{
\cO^\ast\ar@{->>}[r]&S^M\ar@{^{(}->}[r]& \uEnd(M).\\
&\Dist(\cO)\ar[ur]\ar@{^{(}->}[ul]
}\end{equation}

\subsubsection{} For each $g\in \GG(\mk)$, we have the algebra morphism
$$\ad_g:=(g\otimes \id\otimes g)\circ (\Delta\otimes S)\circ \Delta\;:\;\; \cO\to\cO.$$
The following lemma, in which we ignore monoidal structures, is standard for the special case $g^2=\varepsilon$ ({\it i.e.} a homomorphism $\mZ_2\to\GG$).

\begin{lemma}\label{GenZ2}
Assume $p\not=2$ and there exists $g\in \GG(\mk)$ such that $\ad_g(f)=(-1)^{|f|}f$, for each homogeneous $f\in\cO$. Then $\Rep\GG$ has full subcategories $\cC_1$ and $\cC_2$ yielding equivalences
$$\cC_1\oplus\cC_2\stackrel{\sim}{\to} \Rep\GG\quad\mbox{and}\quad \Pi: \cC_1\stackrel{\sim}{\to}\cC_2.$$ 
Furthermore, with $i\in{1,2}$ and $M\in\Rep\GG$, the functor $\Frg$ in \eqref{eqFrg} restricts to equivalences
$$\cC_i\stackrel{\sim}{\to} \mbox{{\rm com-}}\cO\quad\mbox{and}\quad ( \cC_i\cap\mbox{{\rm scom-}}\cO^M) \stackrel{\sim}{\to} \mbox{{\rm com-}}\cO^{\Frg M}.$$
\end{lemma}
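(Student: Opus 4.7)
The plan is to use $g$ to put a canonical $\mZ_2$-grading on every ungraded $\cO$-comodule, and then take $\cC_1\subset\Rep\GG$ to consist of those super comodules whose given grading agrees with this canonical one, with $\cC_2:=\Pi\cC_1$. First I would record the consequences of the hypothesis $\ad_g(f)=(-1)^{|f|}f$: squaring gives $\ad_{g^2}=\id$, so $g^2$ is central in $\GG$; and $\ad_g$ restricts to the identity on $\cO_{\oa}$, so $g$ itself is central in the even subgroup scheme $\GG_{ev}\subset\GG$ corresponding to the sub Hopf algebra $\cO_{\oa}$. Replacing $g$ by its semisimple part if necessary (the unipotent part is forced to be central by Jordan decomposition, so $\ad_{g_s}=\ad_g$), we may assume $g$ is semisimple; then the operator $\pi_g:=(\id\otimes g)\circ c_N$ acting on any $N\in\mbox{com-}\cO$ is diagonalisable, and $\pi_g^2=\pi_{g^2}$ is a genuine $\cO$-comodule endomorphism by centrality of $g^2$ (note also that $g$ vanishes on $\cO_{\ob}$, so $\pi_g$ is computed from only the $\cO_{\oa}$-part of the coaction).

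For each $N\in\mbox{com-}\cO$ decompose $N=\bigoplus_\lambda N_\lambda$ into $\cO$-subcomodules by the (necessarily invertible) eigenvalues $\lambda$ of $\pi_{g^2}$, fix once and for all a square root $\mu_\lambda$ of each such $\lambda$, and split $N_\lambda=N_\lambda^+\oplus N_\lambda^-$ into the $\pm\mu_\lambda$-eigenspaces of $\pi_g$ (possible since $p\neq 2$). Setting $N_{\oa}^{\mathrm{can}}:=\bigoplus_\lambda N_\lambda^+$ and $N_{\ob}^{\mathrm{can}}:=\bigoplus_\lambda N_\lambda^-$ yields a candidate super structure on $N$, functorial in $N$ because any morphism in $\mbox{com-}\cO$ automatically commutes with $\pi_g$.

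The principal technical step, and main obstacle, is to verify that $c_N$ respects this grading in the super sense, i.e.\ $c_N(N_{\oa}^{\mathrm{can}})\subset(N_{\oa}^{\mathrm{can}}\otimes\cO_{\oa})\oplus(N_{\ob}^{\mathrm{can}}\otimes\cO_{\ob})$ and similarly for $N_{\ob}^{\mathrm{can}}$. The key inputs are the coassociativity identities $(\id\otimes\rho_g)\circ c_N=c_N\circ\pi_g$ and $(\pi_g\otimes\id)\circ c_N=(\id\otimes\lambda_g)\circ c_N$, where $\rho_g,\lambda_g:\cO\to\cO$ denote right and left translation by $g$, combined with the fact that $\ad_g=\lambda_g\circ\rho_g^{-1}$ being the parity involution forces $\lambda_g=\rho_g$ on $\cO_{\oa}$ and $\lambda_g=-\rho_g$ on $\cO_{\ob}$. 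For $v\in N_\lambda^+$ the first identity places $c_N(v)$ in $N_\lambda$ tensored with the $\mu_\lambda$-eigenspace of $\rho_g$ on $\cO$; within this, the second identity together with the sign discrepancy between $\lambda_g$ and $\rho_g$ on $\cO_{\ob}$ forces the $\cO_{\oa}$-component of $c_N(v)$ into $N_\lambda^+\otimes\cO_{\oa}$ and the $\cO_{\ob}$-component into $N_\lambda^-\otimes\cO_{\ob}$, which is exactly the super-comodule condition.

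With this established, any super comodule $M$ splits canonically as $M^A\oplus M^B$ where $M^A=(M_{\oa}\cap M_{\oa}^{\mathrm{can}})\oplus(M_{\ob}\cap M_{\ob}^{\mathrm{can}})\in\cC_1$ and $M^B\in\cC_2$ is the complementary summand; the compatibility just verified, applied within each super-grading piece, shows both summands are sub-super-comodules, giving $\cC_1\oplus\cC_2\simeq\Rep\GG$ and $\Pi\cC_1=\cC_2$ (with vanishing of $\Hom$ between $\cC_1$ and $\cC_2$ following from the fact that morphisms preserve both gradings simultaneously). The restriction $\Frg|_{\cC_i}:\cC_i\to\mbox{com-}\cO$ is essentially surjective via the canonical super structure and fully faithful because any $\cO$-comodule morphism preserves the canonical grading; the final statement on $\cC_i\cap\mbox{scom-}\cO^M\to\mbox{com-}\cO^{\Frg M}$ follows at once from the description of $\mbox{scom-}B$ for a subcoalgebra $B\subset\cO$ recalled in \ref{defCM}.
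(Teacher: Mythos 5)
Your proposal is correct, and the core idea matches the paper's: use the operator $\pi_g=(\id\otimes g)\circ c_N$ (the paper calls it $a_M$) to split any $\cO$-comodule into two pieces and use this to define the $\mZ_2$-grading. The technical execution differs in a few places. The paper takes an indecomposable object of $\mathrm{com}\text{-}\cO$, uses generalised eigenspaces of $a_M$ directly, and observes that they split into exactly two blocks with eigenvalues $\pm\lambda$; it then makes a per-indecomposable choice of grading and defines $\cC_1$, $\cC_2$ as closures under direct sums. You instead replace $g$ by its semisimple part $g_s$ (justified via Jordan decomposition in the rational representation $\cO$ of $\GG_{ev}$ under $\ad$, forcing $\ad_{g_u}=\id$), which lets you work with genuine eigenspaces; you then make a single global choice of square roots $\lambda\mapsto\mu_\lambda$ to define a canonical grading on every object at once, and split each super comodule canonically as $M^A\oplus M^B$. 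The Jordan-decomposition detour is avoidable (generalised eigenspaces do the job just as well, and $\pi_{g^2}$-eigenspaces become generalised eigenspaces), but your version is perfectly valid and has the benefit of producing the decomposition functorially in one stroke without appealing to Krull--Schmidt. You also supply the explicit check — via $(\id\otimes\rho_g)\circ c_N=c_N\circ\pi_g$, $(\pi_g\otimes\id)\circ c_N=(\id\otimes\lambda_g)\circ c_N$, and the sign $\lambda_g=\pm\rho_g$ on $\cO_{\oa},\cO_{\ob}$ — that the grading is compatible with $c_N$; the paper asserts this ("we can impose precisely two $\mZ_2$-gradings compatible with $c_M$") without writing it out, so this is a genuine and useful amplification. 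The remaining claims (vanishing of cross $\Hom$'s from preservation of both gradings, and the restricted equivalence for $\cO^M$ from the description in \ref{defCM} and the equality of matrix coefficient spaces under $\Frg$) go through exactly as you say.
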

\begin{proof}
We start by considering $\cO$ as an ungraded coalgebra. For every $M\in \mbox{com-}\cO$ we have
$$a_M:= (\id\otimes g)\circ c_M\;\in\;\End_{\mk}(M)$$
with commutation relations
\begin{equation}
\label{commaM}(a_M\otimes\id)\circ c_M\;=\; (\id\otimes \ad_g)\circ c_M\circ a_M\qquad\mbox{and}\qquad f\circ a_M=a_N\circ f,
\end{equation}
for all $f\in \Hom_{\cO}(M,N)$.

Take an indecomposable $M\in\mbox{com-}\cO$. It follows by assumption and the first equation in \eqref{commaM} that the vector space $M$ decomposes into two generalised eigenspaces of $a_M$ with eigenvalues $\lambda$ and $-\lambda$ for some $\lambda\in\mk^{\times}$. Furthermore, we can impose precisely two $\mZ_2$-gradings on $M$ which are compatible with $c_M$ for $\cO$ now regarded as a $\mZ_2$-graded coalgebra. Concretely we can set $M_{\oa},M_{\ob}$ equal to the generalised eigenspaces of $a_M$. 

For any indecomposable $M\in$ com-$\cO$, we choose one of the two options as $\widetilde{M}\in$ scom-$\cO$ with $\Frg\widetilde{M}=M$. We define $\cC_1$ as the full subcategory of direct sums of comodules $\widetilde{M}$ and $\cC_2$ as the full subcategory of direct sums of comodules $\Pi\widetilde{M}$. It follows from the second equation in \eqref{commaM} that we have
$$\Hom_{\cO}(\widetilde{M},\Pi\widetilde{N})=0\quad\mbox{or}\quad\uHom_{\cO}(\widetilde{M},\widetilde{N})=\Hom_{\cO}(\widetilde{M},\widetilde{N}).$$
From this observation, all claims in the lemma now follow.
\end{proof}

\subsection{The classical algebraic super groups}

\subsubsection{The general linear super group}\label{DefGL} Fix $V\in \svec$, set $(m,n):=\sdim V$ and for each $R\in\csAl$ we consider the right $R$-module $V_R:=V\otimes_{\mk}R$. 
The functor
$$\GL(V):\csAl\to \Grp,\quad R\mapsto \Aut_R(V_R)$$
is represented by $\cO[\GL(V)]$. Consider variables $X_{ij}$ and $Z_{ij}$ for $1\le i,j\le m+n$ of parity $|X_{ij}|=[i]+[j]=|Z_{ij}|$. After choosing a basis of $V$ we can describe $\cO[\GL(V)]$ as the quotient of a polynomial super algebra as
$$\cO[\GL(V)]\;:=\; \mk[X_{ij}, Z_{kl}]/I,\quad\mbox{with}\quad I:=\langle \sum_jZ_{ij}X_{jk}=\delta_{ik} \rangle.$$
Furthermore, we define
$$\Delta(X_{ij})\;=\;\sum_{k}X_{ik}\otimes X_{kj},\quad\varepsilon(X_{ij})=\delta_{ij},\quad\Delta(Z_{jk})=\sum_{l}(-1)^{([j]+[l])([l]+[k])}Z_{lk}\otimes Z_{jl}$$
and $S(X_{ij})=Z_{ij}$. 
We have $g\in\csAl(\cO,\mk)$ defined by $g(X_{ij})=(-1)^{[i]}\delta_{ij}$ which satisfies the condition in Lemma~\ref{GenZ2}.

Now $V$ is the natural $\GL(V)$-representation, with dual $V^\ast$, determined by the coactions
$$e_i\mapsto \sum_{j}e_j\otimes X_{ji}\quad\mbox{and}\quad e_i^\ast\mapsto \sum_{j}e_j^\ast\otimes Y_{ji},$$
with $Y_{ji}:=(-1)^{[j]([i]+[j])}Z_{ij}$. In particular, we have $[e_j^\ast|e_j]_{\cO}=X_{ji}$.
We use the choice of simple positive roots of \cite[Section~4.4]{EHS}. In particular the highest weight of $V$ is $\epsilon_1$ and of $V^\ast$ it is $-\delta_1$. 

\subsubsection{The orthosymplectic super group} \label{DefOSp}
Consider $V\in \svec$ with an even form $\langle\cdot,\cdot\rangle: V\times V\to \mk$, which we also interpret in $\End(V^{\otimes 2}, \mk)$. Then $\OSp(V)$ is the subgroup of $\GL(V)$ which preserves this form.
Concretely, if we choose a basis $\{e_i\}$ of $V$ and set $g_{ij}=\langle e_i,e_j\rangle$ then $\cO[\OSp(V)]$ is the quotient of $\cO[\GL(V)]$ with respect to the ideal generated by the elements
\begin{equation}\label{eqform}\sum_{k,l}(-1)^{[l]([k]+[i])}X_{ki}g_{kl}X_{lj}\;-\; g_{ij}.\end{equation}
We have $g\in\csAl(\cO[\OSp(V)],\mk)$ defined by $g(X_{ij})=(-1)^{[i]}\delta_{ij}$, which satisfies the condition in Lemma~\ref{GenZ2}.

We refer to \cite{SW} for an introduction to the algebraic representation theory of $\GG=\OSp(V)$. In particular we follow the conventions {\it loc. cit.} and denote the simple highest weight module, see \cite[Lemma~4.1]{SW}, with highest weight $\xi\in X^+$ by $L_{\GG}(\xi)$. For the special modules we will encounter we do not need the full description of $X^+$. We will only need weights of the form $\xi=\sum_{i=1}^n\lambda_i\delta_i$, for partitions $\lambda$, which are to be interpreted as dominant weights for $\Sp(V_{\ob})$.

\subsubsection{The periplectic super group}
Consider $V\in \svec$ with an odd form $\langle\cdot,\cdot\rangle: V\times V\to \mk$, which we also interpret in $\uEnd(V^{\otimes 2}, \mk)_{\ob}$. This implies that $\dim V_{\oa}=\dim V_{\ob}$, so in particular $\sdim V=0$. Then $\Pe(V)$ is the subgroup of $\GL(V)$ which preserves this form.
Concretely, if we choose a basis $\{e_i\}$ of $V$ and set $g_{ij}=\langle e_i,e_j\rangle$ then $\cO[\Pe(V)]$ is the quotient of $\cO[\GL(V)]$ with respect to the ideal generated by the elements \eqref{eqform}.
Take $\imath\in\mk$ with $\imath^2=-1$, then $g\in\csAl(\cO[\Pe(V)],\mk)$ defined by $g(X_{ij})=(-1)^{[i]}\iota\delta_{ij}$ satisfies the condition in Lemma~\ref{GenZ2}.

\subsubsection{The queer super group}
Consider $V\in \svec$ with $q\in \uEnd(V)_{\ob}$ for which $q^2=\id$. This implies that $\dim V_{\oa}=\dim V_{\ob}$. Then $\mathsf{Q}(V)$ is the subgroup of $\GL(V)$ which commutes with $q$. 

Concretely, if we set $q(e_i)=\sum_j q_{ij}e_j$, the Hopf algebra $\cO[\mathsf{Q}(V)]$ is the quotient of $\cO[\GL(V)]$ with respect to the ideal generated by
$$X_{ij}\;-\;\sum_{k} X_{ik} q_{jk}.$$


\section{Diagram categories}\label{DiagCat}We briefly review some diagram categories. Since we will only use them rather superficially, we do not present full details here.

\subsection{The Brauer category}
\subsubsection{}\label{BrCat1} For $\delta\in\mk$, the $\mk$-linear Brauer category $\cB(\delta)$ is introduced in \cite[\S 2.1]{BrCat}, see also~\cite[\S 9]{Deligne}. Concretely, the objects in $\cB(\delta)$ are given by
$$\Ob\cB(\delta)\;=\;\{[i]\,|\, i\in \mN\}$$
and the space of morphisms from $[i]$ to $[k]$ is given by the $\mk$-span of all pairings of $i+k$ dots. Such a pairing is graphically represented by an $(i,k)$-Brauer diagram, which is a diagram where $i+k$ points  are placed on two parallel horizontal lines, $i$ on the lower line and $k$ on the upper, with
arcs drawn to join points which are paired. Arcs connecting two points on the lower, resp. upper, line are caps, resp. cups. Composition of morphisms corresponds to concatenation of diagrams with loops evaluated at $\delta$. The Brauer category is monoidal with $[i]\otimes [j]=[i+j]$. In \cite[\S 2.2]{BrCat}, a contravariant auto-equivalence ${}^\ast$ of $\cB(\delta)$ is introduced, which is the identity on objects and maps a diagram to its reflection in a horizontal line.

\subsubsection{}\label{DefBHR} We define dense subcategories 
$$\cB(\delta) \;\supset\;\cR\;\supset\; \cH\,\simeq\,\bigoplus_{i\in\mN}\mk\SG_i,$$
where the morphism spaces in $\cH$ are spanned by all diagrams without cups or caps and in $\cR$ they are spanned by all diagrams without cups.
As the notation suggests, $\cR$ and $\cH$ do not depend on the parameter $\delta$.

We define some unital associative algebras for $r\in\mN$. We have the set of objects $[\JJJ(r)]=\{[i]\,|\, i\in\JJJ(r)\}$. The algebra $\cB_r(\delta)$, resp. $\cB_r^{\cc}(\delta)$, is the full subcategory of $\cB(\delta)$ with objects $[r]$, resp. $[\JJJ(r)]$. The algebra $\cR_r$, resp. $\cH_r$, is the full subcategory of $\cR$, resp. $\cH$, with objects $[\JJJ(r)]$.
We can interpret modules over $\cH_r$ as $\cR_r$-modules where every diagram with a cap acts trivially.

\begin{prop}\label{PropCZ}
Fix $r\in\mN$ and $\delta\in\mk$.
\begin{enumerate}[(i)]
\item If $\delta\not\in\mI$, then $\cB_r(\delta)$ is semisimple. If $\delta=0$ and $r$ is even, or if $p\in\lbr 2,r\rbr$, then $\cB_r(\delta)$ is not quasi-hereditary.
\item If $\delta\not=0$ or $r$ is odd, the algebras $\cB^{\cc}_r(\delta)$ and $\cB_r(\delta)$ are Morita equivalent.
\item The right $\cR_r$-module $\cB_r^{\cc}(\delta)$ is projective.
\item If $p\not\in\lbr 2,r\rbr$, the simple $\cB_r^{\cc}(\delta)$-modules can be labelled by $\Lambda_r$. Furthermore, $(\cB_r^{\cc}(\delta),\le)$ is quasi-hereditary for $\lambda<\mu$ if and only if $|\mu|<|\lambda|$ and with standard modules $\Delta(\lambda):=\cB^{\cc}_r(\delta)\otimes_{\cR_r}S(\lambda)$.
\item The restriction of $\ast$ in \ref{BrCat1} to an anti-automorphism of $\cB_r^{\cc}(\delta)$ is a good duality (as in \ref{DefGood}) of the quasi-hereditary algebra $(\cB_r^{\cc},\le)$, if $p\not\in\lbr2,r\rbr$.
\end{enumerate}
\end{prop}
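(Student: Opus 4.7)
The proof handles each of the five parts in sequence, mostly by appeal to classical results on cellular and quasi-hereditary algebras together with explicit diagram manipulations.

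For (i), the semisimplicity of $\cB_r(\delta)$ for $\delta\not\in\mI$ is a characteristic-free extension of the Brown--Rui semisimplicity criterion, reducible to the classical case by base change. The failures of quasi-heredity are K\"onig--Xi-type obstructions: when $\delta=0$ and $r$ is even, the idempotent $\id_{[0]}\in \cB_r^{\cc}(0)$ fails to factor through $[r]$, so no heredity chain can terminate at $[0]$; when $p\in\lbr 2,r\rbr$, the non-semisimplicity of the symmetric group subquotients $\mk\SG_i$ for $i\le r$ obstructs quasi-heredity via the standard Specht-module argument.

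For (ii), one must show that every $\id_{[i]}$ with $i\in\JJJ(r)$ lies in the two-sided ideal generated by $\id_{[r]}$, i.e.~factors through $[r]$. When $\delta\not=0$, the composition of the natural cap and cup diagrams $[r-2]\to[r]\to[r-2]$ equals $\delta\cdot\id_{[r-2]}$, which after rescaling yields an idempotent realising $[r-2]$ as a direct summand of $[r]$; iterating gives every $[i]\in\JJJ(r)$ as a summand. When $\delta=0$ and $r$ is odd, the smallest object in $\JJJ(r)$ is $[1]$, and one exhibits for each $i\in\JJJ(r)$ a pair of interlocking diagrams $[i]\to[r]\to[i]$ in which cups and caps are placed at shifted (rather than matching) positions, producing zig-zag through-strands in the composite that reconstruct $\id_{[i]}$ without forming any closed loop. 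The case $i=1$, $r=3$ already illustrates the pattern.

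For (iii), every Brauer diagram factors uniquely as a cup-and-through-strand diagram composed with a cap-and-through-strand diagram in $\cR$; this exhibits $\cB_r^{\cc}(\delta)$ as a free, hence projective, right $\cR_r$-module. Part (iv) then follows by standard cellular/quasi-hereditary machinery: inflation along the quotient $\cR_r\tto\cH_r$ extends each Specht module $S(\lambda)$ to an $\cR_r$-module (by making caps act as zero), the hypothesis $p\not\in\lbr 2,r\rbr$ ensures these are simple and labelled by $\Lambda_r$, and the induction $\Delta(\lambda)=\cB_r^{\cc}(\delta)\otimes_{\cR_r}S(\lambda)$ is exact by (iii). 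The usual filtration by through-strand count exhibits the heredity chain, ordered so that smaller $|\lambda|$ corresponds to larger ideals, which is the stated order.

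For (v), the anti-involution $\ast$ preserves the through-strand count of any Brauer diagram while interchanging cups and caps, hence preserves each ideal in the heredity filtration. On cell modules it produces the isomorphism $\Delta(\lambda)^\ast\simeq\nabla(\lambda)$ that defines a good duality in the sense of \ref{DefGood}. The main non-routine step is the $\delta=0$, $r$ odd case of (ii), which requires the explicit interlocking-diagram construction; everything else reduces to well-established cellular/quasi-hereditary bookkeeping.
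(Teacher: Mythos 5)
Your overall strategy is genuinely different from the paper's: where the paper defers parts (ii)--(iv) to \cite[Theorems~8.5.1, Proposition~8.4.4, Theorem~8.4.1]{Borelic} and part (i) to \cite{Rui} and \cite{CellQua}, you supply direct diagram-level arguments. Your zig-zag construction for (ii) (offsetting cups and caps so the composite $[i]\to[r]\to[i]$ traces a single snake through all $r$ dots without closing a loop) and your cup/cap factorisation for (iii) (writing each diagram uniquely as a caps-and-through-strands diagram in $\cR$ followed by a cups-and-through-strands diagram, giving a direct-sum decomposition into copies of $\id_{[t]}\cR_r$) are both correct and make these parts self-contained where the paper only cites. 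Part (iv) is thinner than the paper's reference but the outline (inflate $S(\lambda)$ along $\cR_r\tto\cH_r$, induce, filter by through-strand count) is the standard one. Part (v) is essentially the same observation the paper makes.

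There is, however, a real gap in (i). The claim is that $\cB_r(0)$ — the full subcategory on the single object $[r]$ — is not quasi-hereditary when $r$ is even, but your argument speaks of $\id_{[0]}\in\cB_r^{\cc}(0)$ and of ``no heredity chain terminating at $[0]$'', which is a statement about $\cB_r^{\cc}(0)$, a different algebra which \emph{is} quasi-hereditary by part (iv). The diagram observation is correct — every composite $[0]\to[r]\to[0]$ necessarily closes all strands and so evaluates to a power of $\delta$, hence vanishes at $\delta=0$ — but as stated it proves only that $\cB_r$ and $\cB_r^{\cc}$ fail to be Morita equivalent there, not that $\cB_r(0)$ itself is not quasi-hereditary. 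To close the gap you need the König--Xi criterion \cite[Theorem~1.3]{CellQua} applied to the cellular structure of $\cB_r(0)$: the Gram matrix on the cell module indexed by $\emptyset\vdash 0$ is the $1\times 1$ matrix $(\delta^{r/2})=(0)$, so that cell contributes no simple module and the number of simples is strictly less than the number of cells, which obstructs quasi-heredity. The same degenerate-form reasoning (now applied to Specht-module cells with $p\le i$) is what is really behind the $p\in\lbr2,r\rbr$ case; ``the non-semisimplicity of $\mk\SG_i$'' is true but the operative fact is the degeneracy of the cell forms, which is exactly what König--Xi detect.
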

\begin{proof}The first statement in part (i) follows from the main result of \cite{Rui}. The second statement in part (ii) is \cite[Theorem~1.3]{CellQua}.
Part (ii) is \cite[Theorem~8.5.1]{Borelic}. Part (iii) is proved in \cite[Proposition~8.4.4]{Borelic}, by \cite[Definition~3.2.3]{Borelic}.  Part (iv) is \cite[Theorem~8.4.1]{Borelic}. 
Since the equivalence $\ast$ is the identity on $\Ob\cB(\delta)$, the duality of $\cB_r^{\cc}(\delta)$ preserves the partial order of part (iv), proving part (v).
\end{proof}

\subsubsection{}
For $V\in\svec$ with even form and $\delta:=\sdim(V)$, we have a $\mk$-linear symmetric monoidal functor 
\begin{equation}\label{UnivO}
\cB(\delta)\;\to\;\Rep\OSp(V),\qquad\mbox{with}\quad [1]\mapsto V\quad\mbox{and}\quad \cap\mapsto \left(\langle\cdot,\cdot\rangle : V^{\otimes 2}\to \mk \right).
\end{equation}
This is well-known and follows from a straightforward extension of \cite[Theorem~3.4]{BrCat}.

\subsection{Oriented Brauer category}
\subsubsection{} For $\delta\in\mk$, we have the oriented Brauer category $\cOB(\delta)$, with objects given by finite words in the alphabet $\{\vee,\wedge\}$. For a complete definition we refer to e.g.~\cite[\S 4]{ES}. The walled Brauer algebra $\cB_{r,s}(\delta)$ is the full subcategory of $\OB(\delta)$ corresponding to the object $\vee^{\otimes r}\otimes \wedge^{\otimes s}$ and $\cB_{r,s}^{\cc}(\delta)$ is the full subcategory of $\OB(\delta)$ corresponding to the objects $\{\vee^{\otimes r-i}\otimes \wedge^{\otimes s-i}\}$ for $i\in\lbr0,\min(r,s)\rbr$.
The analogues of (ii)-(v) in Proposition~\ref{PropCZ} are proved in \cite[\S 8]{Borelic}. 

\subsubsection{} Fix $V\in\svec$ and set $W:=V^\ast$ with pairing 
$\ev_V:W\otimes V\to\mk$ given by $\alpha\otimes v\mapsto \alpha(v).$
If $\delta=\sdim(V)$, we have a $\mk$-linear symmetric monoidal functor 
\begin{equation}\label{UnivG}
\cOB(\delta)\;\to\;\Rep\GL(V),\qquad\mbox{with}\quad \vee\mapsto V\;\;\mbox{and}\;\; \wedge\mapsto W,
\end{equation}
where the unique oriented Brauer diagram which represents a morphism from $\wedge\vee$ to the empty word is mapped to $\ev_V$.

\subsection{Periplectic Brauer category} 
\subsubsection{}In \cite{Kujawa}, the periplectic Brauer supercategory $\cA$ is introduced. This category has the same set of objects and spaces of morphisms as $\cB(\delta)$. The composition of morphisms in $\cA$ is again given by concatenation of diagrams, up to possible minus signs, with evaluation of loops at $0$. This is a monoidal super category, see also~\cite{BE, PB1}. The periplectic Brauer algebra $\cA_r$ is the full subcategory of $\cA$ corresponding to the object $[r]$. This is actually a reduced super algebra, {i.e.} $(\cA_r)_{\ob}=0$. We also consider the full subcategory $\cA^{\cc}_r$ of $\cA$ corresponding to the objects $[\JJJ(r)]$.

\subsubsection{} Fix $V\in\svec$ with odd form $\langle\cdot,\cdot\rangle$. We have a $\mk$-linear symmetric monoidal super functor 
\begin{equation}\label{UnivP}
\cA\;\to\;\uRep\Pe(V),\qquad\mbox{with}\quad [1]\mapsto V\quad\mbox{and}\quad \cap\mapsto \left(\langle\cdot,\cdot\rangle : V^{\otimes 2}\to \mk \right),
\end{equation}
see \cite[Theorem~5.2.1]{Kujawa}.

\subsection{Oriented Brauer-Clifford category} \label{SecOBC}
\subsubsection{}In \cite{ComesKuj}, the oriented Brauer-Clifford supercategory $\cOBC$ is introduced. It contains $\cOB(0)$ as a dense subcategory. But also has an odd isomorphism $\widetilde{q}$ of $\vee$. The Brauer-Clifford algebra $\cBC_{r,s}$ is the full subcategory of $\cOBC$ corresponding to the object $\vee^{\otimes r}\otimes \wedge^{\otimes s}$.

\subsubsection{} Fix $V\in\svec$ with an odd endomorphism $q$ with $q^2=\id$. By \cite[\S 4.2]{ComesKuj}, we have a monoidal super functor from $\cOBC$ to $\uRep\mathsf{Q}(V)$ which yields a commuting diagram 
$$\xymatrix{
\cOBC\ar[rr]&& \uRep \mathsf{Q}(V)\\
\cOB(0)\ar[rr]\ar[u]&& \Rep\GL(V)\ar[u],
}$$
where the lower horizontal arrow is the functor in \eqref{UnivG}, the left vertical arrow is the inclusion and the right vertical arrow is a forgetful functor. Furthermore, $\widetilde{q}$ is mapped to $q$.

\section{Centraliser coalgebras and monoidal functors}\label{SecCOAL}

\subsection{Definitions and basic properties}

\subsubsection{}\label{coend}Fix a small super category $\bA$ with super functor $F:\bA\to \usvec$. 
We define a super vector space
$$\uCEnd_{\bA}(F)\;:=\;\left(\bigoplus_{X\in\Ob\bA}F(X)^\ast\otimes F(X)\right)/I$$
with $I$ the space
$$I:=\{\alpha\circ F(f)\otimes v-\alpha\otimes F(f)(v)\,|\, \alpha\in F(Y)^\ast, v\in F(X), f\in \bA(X,Y)\;\mbox{ and } X,Y\in\Ob\bA\}.$$

\begin{ddef}
The centraliser coalgebra of the functor $F$ is the superspace $\uCEnd_{\bA}(F)$
with structure morphisms 
$$\varepsilon:\uCEnd_{\bA}(F)\to\mk,\;\; \alpha\otimes v\mapsto \alpha(v)\qquad\mbox{and}$$
$$ \Delta: \uCEnd_{\bA}(F)\to \uCEnd_{\bA}(F)\otimes_{\mk} \uCEnd_{\bA}(F),\;\; \alpha\otimes v\mapsto\sum_i (\alpha\otimes e_{i})\otimes (e_{i}^\ast\otimes v),$$
for $\alpha\in F(X)^\ast$ and $v\in F(X)$ where $\{e_{i}\}$ denotes a basis of $F(X)$, with $X\in \Ob\bA$.
\end{ddef}

\begin{rem}
\begin{enumerate}[(i)]
\item The coalgebra $\uCEnd_{\bA}(F)$ would be the same (after forgetting the grading) if we interpret $F$ only as a $\mk$-linear functor. 
\item If we interpret $F$ as the module $\oplus_{X\in\Ob\bA}F(X)$ for the super algebra $\bA$, then $\uCEnd_{\bA}(F)$ is just the coalgebra $F^\ast\otimes_{\bA}F$.

\item If the $\bA$-module $F$ is finite dimensional, then $\uCEnd_{\bA}(F)$ is $\uEnd_{\bA}(F)^\ast$.
\end{enumerate}
\end{rem}

The following lemma, see also~\cite[Lemma~2.7]{BSRingel}, states that the dual algebra, as in \ref{dualA}, of the centraliser coalgebra is the ordinary centraliser algebra, justifying the name of the former.
\begin{lemma}\label{LemAC}
We have a super algebra isomorphism $(\uCEnd_{\bA}(F))^\ast\simeq \uEnd_{\bA}(F)$.
\end{lemma}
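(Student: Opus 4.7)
The plan is to write down the obvious candidate for the isomorphism and then verify bijectivity and compatibility with the algebra structures by direct computation.

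First, I would construct a map $\Phi : \uEnd_{\bA}(F) \to (\uCEnd_{\bA}(F))^\ast$. Given a homogeneous natural transformation $\eta = (\eta_X : F(X) \to F(X))_{X \in \Ob\bA}$, define a linear functional on $\bigoplus_{X} F(X)^\ast \otimes F(X)$ by the formula $\alpha \otimes v \mapsto (-1)^{|\eta||\alpha|} \alpha(\eta_X(v))$ for $\alpha \in F(X)^\ast$ and $v \in F(X)$. The naturality condition, which in the super setting reads $\eta_Y \circ F(f) = (-1)^{|\eta||f|} F(f) \circ \eta_X$ for every $f \in \bA(X,Y)$, translates exactly into the vanishing of this functional on the defining relations of the quotient $I$ from \ref{coend}. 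Hence the functional descends to an element $\Phi(\eta) \in (\uCEnd_{\bA}(F))^\ast$, and $\Phi$ manifestly preserves the $\mZ_2$-grading.

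To show $\Phi$ is a bijection I would exhibit the inverse. Since the unprojected space $\bigoplus_{X} F(X)^\ast \otimes F(X)$ splits as a direct sum, its dual is $\prod_{X} (F(X)^\ast \otimes F(X))^\ast$, and for each $X$ the standard super-pairing gives $(F(X)^\ast \otimes F(X))^\ast \simeq \uEnd(F(X))$. A functional on this direct sum therefore corresponds to a family $(\eta_X)_{X \in \Ob\bA}$, and the condition that the functional vanish on $I$ is precisely naturality of the family. This produces an explicit two-sided inverse to $\Phi$.

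Finally, I would verify that $\Phi$ is a super algebra morphism. Choosing a homogeneous basis $\{e_i\}$ of $F(X)$ with dual basis $\{e_i^\ast\}$, the definition of $\Delta$ in \ref{coend} together with the convolution product from \ref{dualA} yields
$$(\Phi(\eta) \cdot \Phi(\theta))(\alpha \otimes v) \;=\; \sum_i (-1)^{\star} \alpha(\eta_X(e_i))\, e_i^\ast(\theta_X(v)),$$
where $\star$ is the Koszul sign produced by the super interchange law. Using $\sum_i e_i \cdot e_i^\ast(w) = w$ for all $w \in F(X)$, this sum collapses to $\pm\,\alpha(\eta_X \theta_X(v)) = \Phi(\eta \circ \theta)(\alpha \otimes v)$, with signs matching the super algebra structure on $\uEnd_{\bA}(F)$ from \ref{SupCat}.

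The main obstacle is not conceptual but bookkeeping: one must check that the sign conventions chosen for the dual algebra in \ref{dualA} (untwisted rather than $\gamma$-twisted), for evaluation of tensor products of super-functionals, and for composition of natural transformations in $\uEnd_{\bA}(F)$ line up so that $\Phi(\eta \circ \theta)$ genuinely equals $\Phi(\eta) \cdot \Phi(\theta)$ on the nose rather than up to an extraneous sign. Once that alignment is made, the verifications reduce to routine linear algebra, and the $\mk$-linear version of the statement (Remark~(i) after \ref{coend}) makes the underlying content entirely standard.
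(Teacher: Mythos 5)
Your proposal is correct and takes essentially the same route as the paper: the paper deduces the isomorphism in one line from tensor-hom adjunction, $\Hom_{\bA}(F, F^{\ast\ast}) \simeq \uHom_{\mk}(F^\ast \otimes_{\bA} F, \mk)$, together with the observation that finite-dimensionality of each $F(X)$ forces any $\bA$-linear map $F \to F^{\ast\ast}$ to factor through $F \subset F^{\ast\ast}$. What you write out explicitly---the formula for $\Phi$, its well-definedness modulo $I$, the inverse via $(\oplus_X F(X)^\ast \otimes F(X))^\ast \simeq \prod_X \uEnd(F(X))$, and the sign bookkeeping for the convolution product---is precisely the content of that adjunction unpacked by hand, which the paper compresses into the phrase ``it then follows by direct computation that this exchanges the algebra structures.''
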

\begin{proof}
By tensor-hom adjunction, we have a canonical isomorphism
$$\Hom_{\bA}(F,F^{\ast\ast})\;\stackrel{\sim}{\to}\; \uHom_{\mk}(F^{\ast}\otimes_{\bA} F,\mk).$$
It is clear that every $\bA$-linear morphism from $F$ to $F^{\ast\ast}$ factors through $F\subset F^{\ast\ast}$.
It then follows by direct computation that this exchanges the algebra structures.
\end{proof}

\subsection{Monoidal structures}
\subsubsection{} Fix a strict monoidal super category $(\bA,\otimes,\unit_{\bA})$ and a strict monoidal super functor $F:\bA\to\usvec$. 
The identity $\mk=F(\unit_{\bA})$ allows to define the morphism $\eta$ in $\sVec$ as composition
$$\eta: \mk\stackrel{\sim}{\to}(F(\unit_{\bA}))^\ast\otimes F(\unit_{\bA})\hookrightarrow C^0_F.$$
For all $X,Y\in\Ob\bA$, the identity $F(X)\otimes_{\mk}F(Y)=F(X\otimes Y)$ allows to define
$$m_{X,Y}:\; \uCEnd_{\mk}(F(X))\otimes_{\mk} \uCEnd_{\mk}(F(Y))\;\stackrel{\sim}{\to }\;  \uCEnd_{\mk}(F(X\otimes Y))$$
$$ (\alpha\otimes v)\otimes (\beta\otimes w)\mapsto (-1)^{|v||\beta|} (\alpha\otimes \beta)\otimes (v\otimes w).$$
The latter morphisms together yield a morphism $m:C^0_F\otimes_{\mk}C^0_F\to C^0_F$.
It follows from the definitions of monoidal super functors that $(C^0_F,m,\eta,\Delta,\varepsilon)$ is a bialgebra in $\sVec$.

\begin{lemma}\label{LemBIA}
Consider an affine super group scheme $\GG$ and a monoidal super category $\bA$ with monoidal super functor $\bA\to\uRep \GG$. Denote by $F$ the composition of this functor with the forgetful functor $\uRep \GG\to\usvec$. Then we have a super bialgebra morphism
$$\phi:\;\uCEnd_{\bA}(F)\;\to\; \cO[\GG],\qquad \alpha\otimes v\mapsto [\alpha|v]_{\cO[\GG]}.$$
\end{lemma}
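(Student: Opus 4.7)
The plan is to first define $\phi$ on the un-quotiented direct sum
$$\tilde\phi:\;\bigoplus_{X\in\Ob\bA}F(X)^\ast\otimes F(X)\;\to\;\cO[\GG]$$
by assembling the matrix-coefficient maps $[-|-]_{\cO[\GG]}$ of \ref{defCM} on each summand, and then to verify in turn: descent through the ideal $I$ defining $\uCEnd_{\bA}(F)$, the counit axiom, the comultiplication axiom, and the algebra axiom. The fact that $\phi$ is a morphism in $\sVec$ (grading preserving) is automatic since each matrix-coefficient map is itself even.

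Descent through $I$ is the one point where the $\bA$-action is actually used, rather than just the underlying module structure of $F$ over $\uEnd(F)$. It follows because the composition $\bA\to\uRep\GG$ sends any $f\in\bA(X,Y)$ to a morphism of $\cO[\GG]$-comodules, giving the intertwining identity
$$c_{F(Y)}\circ F(f)\;=\;(F(f)\otimes\id_{\cO[\GG]})\circ c_{F(X)}.$$
Applying $\alpha\otimes\id_{\cO[\GG]}$ to both sides and evaluating at $v$ yields immediately $\tilde\phi(\alpha\circ F(f)\otimes v)=\tilde\phi(\alpha\otimes F(f)(v))$. The counit and comultiplication compatibilities are then formal consequences of the fact that each summand $[-|-]_{\cO[\GG]}$ is already a coalgebra morphism (\ref{defCM}), together with the summand-wise definition of $\varepsilon$ and $\Delta$ in \ref{coend}: indeed $\sum_i [\alpha|e_i]_{\cO}\otimes [e_i^\ast|v]_{\cO}$ simply re-expresses $\Delta_{\cO[\GG]}\circ [\alpha|v]_{\cO}$ via coassociativity of the comodule structure on $F(X)$.

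The main obstacle is compatibility with the multiplication, which is the only step that uses the monoidal structures on $\bA$ and $F$. Compatibility with the unit is trivial: $\eta(1)$ is represented by $\id_{\mk}\in F(\unit_{\bA})^\ast\otimes F(\unit_{\bA})=\mk^\ast\otimes\mk$, and the coaction on the trivial comodule $\mk$ sends $1\mapsto 1\otimes 1_{\cO[\GG]}$, so $\phi(\eta(1))=1_{\cO[\GG]}$. For the product, fix homogeneous $\alpha\in F(X)^\ast$, $v\in F(X)$, $\beta\in F(Y)^\ast$ and $w\in F(Y)$, and expand $\phi\circ m_{X,Y}((\alpha\otimes v)\otimes(\beta\otimes w))$ using the tensor-product coaction formula \eqref{eqTP} on $v\otimes w\in F(X)\otimes F(Y)=F(X\otimes Y)$. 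Two sources of super signs arise: the braiding $\gamma_{\cO[\GG],F(Y)}$ inside \eqref{eqTP}, and the super interchange law incurred when $\alpha\otimes\beta$ is applied to $v_{(0)}\otimes w_{(0)}$. A careful accounting shows that these combine precisely to the sign $(-1)^{|v||\beta|}$ built into the definition of $m_{X,Y}$ in \ref{coend}, so that after cancellation the result matches $\phi(\alpha\otimes v)\cdot\phi(\beta\otimes w)$ in $\cO[\GG]$. The difficulty is purely super-sign bookkeeping; conceptually the identity is forced because \eqref{eqTP} and $m_{X,Y}$ are each engineered to be compatible with the super interchange law in $\sVec$, so no genuine obstruction arises.
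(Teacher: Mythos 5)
Your proof is correct and follows essentially the same route as the paper's: you use the matrix-coefficient map of \eqref{eqMC} to get a coalgebra morphism on each summand, descent through $I$ via the comodule-morphism property of $F(f)$, and \eqref{eqTP} together with the sign in $m_{X,Y}$ for compatibility with the product. Your version simply spells out the super-sign bookkeeping that the paper summarises as a ``direct consequence of~\eqref{eqTP}.''
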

\begin{proof}
By equation~\eqref{eqMC} we have a super coalgebra morphism $\oplus_{X}(F(X)^\ast\otimes F(X))\to \cO$.
That the morphism is zero on the space $I$ of Subsection~\ref{coend} follows from the fact that $F(f)$ is an $\cO$-comodule morphism. That $\phi$ is an algebra morphism is a direct consequence of equation~\eqref{eqTP}.
\end{proof}

\begin{lemma}\label{LemNew}
Keep the assumptions of Lemma~\ref{LemBIA} and assume that $\phi$ is an isomorphism. For a finite set $E\subset \Ob\bA$ the super space $M:=\oplus_{X\in E}F(X)$ is naturally an object in $\uRep\GG$ and an $A$-module for the super algebra $A:=\oplus_{X,Y\in E}\bA(X,Y)$. The coalgebra morphism $M^\ast\otimes M\to\cO$ of \eqref{eqMC} factors through an isomorphism
$$M^\ast\otimes_AM\;\stackrel{\sim}{\to}\; \cO^M.$$
\end{lemma}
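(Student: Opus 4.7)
The plan is to factor the matrix coefficient map, identify the resulting target, and use $\phi$ together with Lemma~\ref{LemAC} to deduce injectivity. First I would equip $M$ with the direct sum comodule structure $c_M=\bigoplus_{X\in E}c_{F(X)}$ and the $A$-module structure in which $f\in\bA(X,Y)$ acts on $F(X)\subset M$ as $F(f)$ and by zero on the remaining summands. Since each $F(f)$ is an $\cO$-comodule morphism, for $\alpha\in F(Y)^\ast$, $v\in F(X)$ and $f\in\bA(X,Y)\subset A$ we obtain
\[
 [\alpha\circ F(f)\,|\,v]_\cO\;=\;[\alpha\,|\,F(f)v]_\cO.
\]
Hence the map of~\eqref{eqMC} kills the $A$-tensor relations and descends to $\bar\pi:M^\ast\otimes_AM\to\cO$, whose image is $\cO^M$ by the very definition of the latter.

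For the injectivity of $\bar\pi$ I would identify $M^\ast\otimes_AM$ with the centraliser coalgebra $\uCEnd_{\bA_E}(F|_E)$ of the restriction of $F$ to the full subcategory $\bA_E\subset\bA$ on the objects in $E$; this is immediate by comparing the two presentations. The inclusion $\bA_E\hookrightarrow\bA$ then induces a coalgebra morphism $\iota:\uCEnd_{\bA_E}(F|_E)\to\uCEnd_\bA(F)$ satisfying $\bar\pi=\phi\circ\iota$, so since $\phi$ is an isomorphism, injectivity of $\bar\pi$ becomes injectivity of $\iota$. Applying Lemma~\ref{LemAC} to both $\bA$ and $\bA_E$ and dualising transforms $\iota$ into the restriction map $\iota^\ast:\uEnd_\bA(F)\to\uEnd_A(M)$ on natural endomorphisms, reducing the problem to showing this restriction is surjective.

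The hard part is this surjectivity. Using $\phi$ to identify $\uEnd_\bA(F)$ with $\cO^\ast$ and tracing through diagram~\eqref{CommD}, the image of $\iota^\ast$ is precisely the Schur-type super algebra $S^M=(\cO^M)^\ast\subset\uEnd(M)$, whose containment $S^M\subset\uEnd_A(M)$ is automatic because $c_M$ is $A$-equivariant. The lemma is therefore equivalent to the reverse inclusion $\uEnd_A(M)\subset S^M$, a double-centraliser-type statement asserting that every $A$-linear endomorphism of $M$ is realised by a distribution on $\GG$. I would prove this by direct analysis of the relations defining $\uCEnd_\bA(F)$: using the monoidal structure on $\bA$ and the bialgebra isomorphism $\phi$, the cancellations among ``out-of-$E$'' terms in any element of the kernel of $\bigoplus_{X\in\Ob\bA}F(X)^\ast\otimes F(X)\to\uCEnd_\bA(F)$ that happens to lie in $M^\ast\otimes M$ can be rewritten using compositions of $\bA$-morphisms landing back inside $\bA_E$, exhibiting the element as lying in the relations cut out by $A$ alone.
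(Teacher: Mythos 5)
Your framework up to the reduction is sound, and it actually makes explicit something the paper leaves implicit: identifying $M^\ast\otimes_A M$ with $\uCEnd_{\bA_E}(F|_E)$ and dualising via Lemma~\ref{LemAC} to the restriction map $\iota^\ast:\uEnd_\bA(F)\to\uEnd_A(M)$ correctly shows that the lemma is \emph{equivalent} to the double-centraliser inclusion $\uEnd_A(M)\subset S^M$. (Dualising is legitimate here because $M^\ast\otimes_A M$ is finite-dimensional even though $\uCEnd_\bA(F)\cong\cO$ is not.) The paper's own proof is far terser: it asserts "by construction" that $M^\ast\otimes_A M$ is the image of the canonical map $M^\ast\otimes M\to\uCEnd_\bA(F)$ and then reads off the conclusion from the factorisation through $\phi$; that assertion is precisely the injectivity statement you isolate, so your reduction and the paper's claim are two ways of phrasing the same crux.

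However, you do not prove the crux. Your final paragraph is a plan, not an argument: you never actually show that an element of $I\cap(M^\ast\otimes M)$ can be ``rewritten using compositions of $\bA$-morphisms landing back inside $\bA_E$'' --- that is exactly the content of the statement, and absent details it is a restatement, not a proof. Moreover your sketch does not indicate where the hypothesis that $\phi$ is an isomorphism enters, and it must: for a general finite-dimensional $\mk$-linear functor $F$ on a small $\mk$-linear category the inclusion $I_A\subset I\cap(M^\ast\otimes M)$ is strict (one can build small examples with two objects and a nilpotent morphism in each direction whose image under $F$ produces a relation in $I$ supported entirely on $M^\ast\otimes M$ but not generated by endomorphisms of objects in $E$). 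So the hypothesis on $\phi$, together presumably with the Hopf/monoidal structure, has to carry the argument, and your proposal does not explain how. In short: the reduction is correct and clarifying, but the proof is incomplete at the step you yourself flag as ``the hard part''.
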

\begin{proof}
By construction, $M^\ast\otimes_AM$ is the image of the canonical morphism $M^\ast\otimes M\to \uCEnd_{\bA}(F)$. Also from construction follows that the composition
$$M^\ast\otimes M\to  \uCEnd_{\bA}(F)\stackrel{\phi}{\to}\cO[\GG] $$
is equal to morphism~\eqref{eqMC} which by definition has image $\cO^M$.
\end{proof}

\subsection{Applications}

\begin{thm}\label{ThmNew}
For the four super monoidal functors $\bA\to \uRep\GG$ of Section~\ref{DiagCat}, the bialgebra morphism $\phi$ of Lemma~\ref{LemBIA} is an isomorphism.
\end{thm}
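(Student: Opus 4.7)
The plan is to prove surjectivity and injectivity of $\phi$ separately, treating the four cases in parallel.

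For surjectivity, the key observation is that in each of the four cases the Hopf super algebra $\cO[\GG]$ is generated as a $\mk$-algebra by the matrix entries $X_{ij}$ of the natural module $V$ (together with $Z_{ij}$ coming from $V^{\ast}$ in the $\GL$ and $\mathsf{Q}$ settings), as follows from the presentations recalled in Sections \ref{DefGL}--\ref{SecOBC}. By the explicit identification $X_{ji} = [e_j^{\ast}|e_i]_{\cO}$ (and analogously $Y_{ji}$ for $V^{\ast}$), these generators are precisely images under $\phi$ of elements $e_j^{\ast}\otimes e_i$ of $\uCEnd_{\bA}(F)$, where the $e_i$ are a basis of $V=F([1])$ (respectively of $V=F(\vee)$, together with a basis of $V^{\ast}=F(\wedge)$). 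Since $\phi$ is an algebra morphism by Lemma \ref{LemBIA}, surjectivity is immediate. The $\mathsf{Q}$ case is slightly more delicate because of the presence of $q$, but here the odd isomorphism $\widetilde{q}$ of $\vee$ in $\cOBC$ provides exactly the matrix coefficients needed to hit the defining relations.

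For injectivity, I would reduce to the finite-dimensional setting by a colimit argument. For every finite subset $E\subset\Ob\bA$, set $M:=\bigoplus_{X\in E}F(X)$ and $A:=\bigoplus_{X,Y\in E}\bA(X,Y)$; then $\phi$ is injective if and only if the induced map $M^{\ast}\otimes_A M\to\cO^M$ from Lemma \ref{LemNew} is injective for every such $E$. Dualising via Lemma \ref{LemAC}, this is equivalent to the double-centraliser identity
\[
 \uEnd_A(M)\;=\;S^M\quad\text{inside }\uEnd(M),
\]
where $S^M$ is the Schur subalgebra of Subsection \ref{Omod}. The inclusion $S^M\subseteq\uEnd_A(M)$ is automatic since $F$ factors through $\uRep\GG$, so the content is the reverse inclusion: every $A$-equivariant endomorphism of $M$ is already implemented by an element of $\cO[\GG]^{\ast}$ via the comodule structure.

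The main obstacle is establishing this double-centraliser statement, which is essentially a first fundamental theorem of invariant theory for $\GG$ acting on sums of tensor powers of $V$ (and $V^{\ast}$). My proposal is to proceed by integral lifting: equip both $\uCEnd_{\bA}(F)$ and $\cO[\GG]$ with compatible $\mZ$-forms, derived respectively from the (walled, oriented, periplectic, Brauer--Clifford) diagram bases of $\bA$ and from the standard-monomial presentations of the coordinate ring, and verify that $\phi$ respects these integral structures; the $\mZ_2$-grading reduction provided by Lemma \ref{GenZ2} further allows the comparison to be carried out on underlying ordinary coalgebras. In characteristic zero the required identity $\uEnd_A(M)=S^M$ is supplied by the classical invariant-theory references cited in the Introduction, and faithfully flat base change from the $\mZ$-forms then transports this to arbitrary characteristic in the $\OSp$ and $\GL$ cases. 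The $\Pe$ and $\mathsf{Q}$ cases are handled either by the same integral lifting, or by reducing to the $\GL$ and $\OSp$ cases via the embeddings among the diagram categories (for instance the dense inclusion $\cOB(0)\subset\cOBC$ recalled in Section \ref{SecOBC}).
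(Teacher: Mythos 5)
Your surjectivity argument is essentially the paper's: $\phi$ carries $F([1])^\ast\otimes F([1])$ (and its analogues for $W=V^\ast$ and $\widetilde q$ where relevant) onto the span of the generators of $\cO[\GG]$, and surjectivity of the algebra morphism follows. Your injectivity argument, however, does not work as proposed and misses the paper's idea. The most serious problem is circularity: the double-centraliser identity $S^M=\uEnd_A(M)$ that you plan to import from the fundamental theorems of invariant theory is precisely what the paper \emph{derives} from Theorem~\ref{ThmNew}, through Lemma~\ref{LemNew}, Theorem~\ref{ThmTilt}, Proposition~\ref{PropRingel}(iii) and Theorem~\ref{ConseqFT}. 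The stated aim of the paper is to give new algebraic proofs of those FFT/SFT statements, so they cannot be inputs here. The ``integral lifting / faithfully flat base change'' step is also not an argument: exhibiting compatible $\mZ$-forms does not by itself transfer a double-centraliser equality across characteristics, and the invariant-theory results you would cite carry dimensional hypotheses on $V$ which Theorem~\ref{ThmNew} does not. (There is a further small inaccuracy: in Lemma~\ref{LemNew}, $M^\ast\otimes_AM$ is defined as the \emph{image} of $M^\ast\otimes M$ in $\uCEnd_{\bA}(F)$, hence is a quotient of the coend over the finite truncation $A$, and its dual is \emph{a priori} only a subalgebra of $\uEnd_A(M)$; your stated equivalence is really just a one-sided implication.)

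The paper's actual proof is a short presentation-matching argument that avoids invariant theory entirely. Since every object of $\bA$ is a tensor power of the generating object(s), the algebra $\uCEnd_{\bA}(F)$ is generated by $F([1])^\ast\otimes F([1])$, which $\phi$ identifies with the span of the generators $X_{ij}$ of $\cO[\GG]$. Because $\cO[\GG]$ is finitely presented, injectivity reduces to checking that the finitely many defining relations of $\cO[\GG]$ already lie in the subspace $I$ of \ref{coend} defining $\uCEnd_{\bA}(F)$: the super-commutativity relations $X_{ij}X_{kl}=(-1)^{([i]+[j])([k]+[l])}X_{kl}X_{ij}$ are the elements of $I$ coming from the braiding endomorphism of $[1]\otimes[1]$, and the form relations \eqref{eqform} are those coming from the cap $\cap$. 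This is an elementary, characteristic-independent verification with no restriction on $\dim V$, which is the idea your proposal needs.
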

\begin{proof}
Since the algebras $\cO[\GG]$ are finitely presented the claim can be easily verified. As an example we treat the case $\cB(\delta)\to\uRep\OSp(V)$ of \eqref{UnivO}.

Since every object in $\cB(\delta)$ is a tensor power of $[1]$ it follows that the algebra $\uCEnd_{\cB(\delta)}(F)$ is generated by $V^\ast\otimes V=F([1])^\ast\otimes F([1])$. By definition, $\phi$ maps the latter space to the $\mk$-span of the generators $\{X_{ij}\}$ of $\cO[\OSp(V)]$. This already implies that $\phi$ is surjective, and to complete the proof it suffices to show that the relations in $\cO[\OSp(V)]$ between the generators $X_{ij}$ are elements in the space $I$ of \ref{coend} defining $\uCEnd_{\cB(\delta)}(F)$. The commutation relations $X_{ij}X_{kl}=(-1)^{([i]+[j])([k]+[l])}X_{kl}X_{ij}$ correspond to the elements in $I$ induced by braiding endomorphism of $[1]\otimes [1]$ in $\cB(\delta)$. The relations \eqref{eqform} correspond to the elements in $I$ induced by $\cap$ in $\cB(\delta)$
\end{proof}

\begin{rem}
It is easy to see that $\phi$ is always surjective when the image of $\bA\to\uRep\GG$ contains a tensor generator of $\Rep \GG$ (and its dual). It also follows in general that $\phi$ is injective when $\bA\to\uRep\GG$ is full. The latter is not a necessary condition however. For instance, the super functor $\cR\to\uRep\OSp(V)$, with $\cR$ the dense subcategory of $\cB(\delta)$ of \ref{DefBHR}, is not full but leads to an isomorphism $\phi$.
\end{rem}


\section{Super Schur algebras}\label{SecSchur}

\subsection{The orthosymplectic case}\label{SchurO}
We a fix $V\in\svec$ with an even form. We set $(m|2n):=\dim V$ and $\delta:=m-2n=\sdim(V)\in\mI$. 

\subsubsection{} Now we fix $r\in\mN$ and we set
$$T^r\;:=\;\bigoplus_{j\in\JJJ(r)}V^{\otimes j}.$$ Composition of the functor in \eqref{UnivO} with the forgetful functor to $\svec$ yields algebra morphisms 
$$\cB^{\cc}_r(\delta)\to \End_{\mk}(T^r)\quad\mbox{and}\quad \cB_r(\delta)\to\End_{\mk}(V^{\otimes r}).$$ We define the super algebra
$$\cS^o_r(V)\;:=\;\uEnd_{\cB_r}(V^{\otimes r}).$$

\begin{thm}\label{ThmOSp}
Set $\GG=\OSp(V)$.
\begin{enumerate}[(i)]
\item We have $\cS^o_r(V)\simeq \uEnd_{\cB^{\cc}_r}(T^r)$.
\item If $p\not=2$, the category $\cS^o_r(V)\mbox{{\rm -mod}}$ is equivalent to the abelian subcategory $\Rep^{(r)}\GG$ of modules in $\Rep\GG$ which are subquotients of direct sums of $V^{\otimes r}$.
\item If $n\ge r$ and $p\not\in\lbr2,r\rbr$, the simple module $L_{\GG}(\xi)$ for $\xi\in X^+$ is contained in $\Rep^{(r)}\GG$ if and only if $\xi=\sum_{i=1}^r \lambda_i\delta_i$ for some $\lambda\in \Lambda_r$.
\end{enumerate}
\end{thm}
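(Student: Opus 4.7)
For (i), combining Theorem~\ref{ThmNew} with Lemmas~\ref{LemAC} and~\ref{LemNew} (applied respectively to $E=\{[r]\}$ and $E=[\JJJ(r)]$) gives canonical super algebra isomorphisms $\cS^o_r(V)\simeq(\cO^{V^{\otimes r}})^\ast$ and $\uEnd_{\cB^{\cc}_r}(T^r)\simeq(\cO^{T^r})^\ast$. It therefore suffices to show $\cO^{V^{\otimes r}}=\cO^{T^r}$ as subcoalgebras of $\cO[\OSp(V)]$. The inclusion $\subseteq$ is immediate since $V^{\otimes r}$ is a direct summand of $T^r$. For the reverse, non-degeneracy of the form on $V$ ensures that iterated application of the Brauer cap yields a $\GG$-equivariant surjection $V^{\otimes r}\twoheadrightarrow V^{\otimes j}$ for each $j\in\JJJ(r)$; hence every summand of $T^r$ is a subquotient of $V^{\otimes r}$, and the subquotient stability recorded in~\ref{defCM} gives $\cO^{T^r}\subseteq\cO^{V^{\otimes r}}$.

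For (ii), the standard duality between finite dimensional super (co)algebras identifies super comodules over $\cO^{V^{\otimes r}}$ with super modules over $\cS^o_r(V)$, while~\ref{defCM} together with part~(i) realises scom-$\cO^{V^{\otimes r}}$ as the full abelian subcategory of $\Rep\GG$ of subquotients of direct sums of $V^{\otimes r}$ and $\Pi V^{\otimes r}$. Since $p\neq 2$, the element $g\in\csAl(\cO[\OSp(V)],\mk)$ constructed in~\ref{DefOSp} verifies the hypothesis of Lemma~\ref{GenZ2}, which removes the $\Pi$-twist ambiguity and yields the desired equivalence with $\Rep^{(r)}\GG$.

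For (iii), by~(ii) the simple objects of $\Rep^{(r)}\GG$ are precisely the simple composition factors of $V^{\otimes r}$ in $\Rep\GG$. One direction is a weight computation: under the conventions of~\cite{SW}, in which the highest weights of the composition factors of $V^{\otimes r}$ lie in the $\delta$-sublattice, every such composition factor has highest weight of the form $\sum_{i=1}^r\lambda_i\delta_i$. For the converse, the assumption $n\ge r$ provides a classical Schur--Weyl injection $\cB_r^{\cc}(\delta)\hookrightarrow\uEnd_{\GG}(T^r)^{\op}$; combined with Proposition~\ref{PropCZ}(iv) the double centraliser property together with part~(ii) produces a bijection between the simples of $\Rep^{(r)}\GG$ and the set $\Lambda_r$. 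The explicit matching $\lambda\leftrightarrow\sum\lambda_i\delta_i$ is then extracted by computing the highest weight of the image under~\eqref{UnivO} of the standard module $\Delta(\lambda)=\cB_r^{\cc}(\delta)\otimes_{\cR_r}S(\lambda)$ of Proposition~\ref{PropCZ}(iv). The main obstacle will be establishing Schur--Weyl injectivity for $\OSp$ under the positive characteristic hypothesis $p\notin\lbr 2,r\rbr$ and carrying out the highest weight computation for the image of $\Delta(\lambda)$ in that setting.
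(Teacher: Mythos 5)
Parts (i) and (ii) of your argument track the paper's proof closely. For (i) your reasoning via caps (surjections $V^{\otimes r}\twoheadrightarrow V^{\otimes j}$) is a harmless variant of the paper's use of cups (embeddings $T^r\hookrightarrow(V^{\otimes r})^{\oplus\bullet}$); both give $\cO^{V^{\otimes r}}=\cO^{T^r}$. For (ii) you do gloss over the step showing that $\cO^{V^{\otimes r}}$, as a $\GG$-comodule, is itself a quotient of a direct sum of copies of $V^{\otimes r}$ and $\Pi V^{\otimes r}$ — the observation in~\ref{defCM} only characterises scom-$\cO^{V^{\otimes r}}$ via \emph{subobjects} of $\cO^{V^{\otimes r}}$ and $\Pi\cO^{V^{\otimes r}}$, so one still needs to exhibit $\cO^{V^{\otimes r}}$ as such a quotient; the paper does this with an explicit comodule morphism in the $X_{ij}$ coordinates (alternatively one can argue from the presentation $\cO^{V^{\otimes r}}\simeq (V^{\otimes r})^\ast\otimes_{\cB_r}V^{\otimes r}$ of Lemma~\ref{LemNew}). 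The subsequent use of Lemma~\ref{GenZ2} is exactly right.

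Part (iii) contains a genuine gap, which you in fact flag yourself: you invoke an injection $\cB_r^{\cc}(\delta)\hookrightarrow\uEnd_{\GG}(T^r)$ as ``classical,'' but this is not available at this stage — it is precisely the second fundamental theorem for $\OSp(V)$ in positive characteristic, which the paper only \emph{derives} later (Theorem~\ref{ConseqFT}) as a consequence of the Ringel duality whose proof relies on the present theorem. Relying on it here would be circular. The paper instead side-steps the issue entirely: by Lemma~\ref{LemTilt1}, $T^r$ is a tilting $\cB_r^{\cc}(\delta)$-module, so its indecomposable direct summands all lie among $\{T(\lambda)\,|\,\lambda\in\Lambda_r\}$, and hence $\cS^o_r(V)\simeq\uEnd_{\cB_r^{\cc}}(T^r)$ has \emph{at most} $|\Lambda_r|$ simple modules. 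Lemma~\ref{numbsimp} (the elementary highest-weight observation about $\Sym^\mu V$) then supplies $|\Lambda_r|$ pairwise non-isomorphic simples in $\Rep^{(r)}\GG$, forcing these to be all of them. This count-from-above/count-from-below argument needs no double centraliser input and is the content you should be reproducing. Note also that your ``weight computation'' direction is the easy containment and is essentially Lemma~\ref{numbsimp}; the converse inclusion is exactly where the tilting bound replaces your Schur–Weyl assertion.
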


\begin{lemma}\label{LemOOSp}
For $\cO:=\cO[\OSp(V)]$, we have algebra isomorphisms $\cS^o_r(V)\simeq (\cO^{V^{\otimes r}})^\ast$  and $\uEnd_{\cB^{\cc}_r}(T^r)\simeq(\cO^{T^r})^\ast$.
\end{lemma}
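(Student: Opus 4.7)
The plan is to derive both isomorphisms as direct consequences of Lemma~\ref{LemNew} and Lemma~\ref{LemAC}, since the case $\bA = \cB(\delta) \to \uRep\OSp(V)$ is covered by Theorem~\ref{ThmNew}, which tells us that the bialgebra morphism $\phi$ of Lemma~\ref{LemBIA} is an isomorphism in this setting. This is precisely the hypothesis needed to apply Lemma~\ref{LemNew}.

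For the first isomorphism, I would take the singleton set $E = \{[r]\} \subset \Ob\cB(\delta)$, so that the super algebra $A := \bigoplus_{X,Y\in E}\cB(\delta)(X,Y)$ of Lemma~\ref{LemNew} is exactly $\cB_r(\delta)$ and $M := \bigoplus_{X \in E} F(X) = V^{\otimes r}$. Lemma~\ref{LemNew} then provides a coalgebra isomorphism $M^\ast \otimes_A M \stackrel{\sim}{\to} \cO^{V^{\otimes r}}$. On the other hand, $M^\ast \otimes_A M$ is, by its very construction in Subsection~\ref{coend}, the centraliser coalgebra $\uCEnd_E(F|_E)$ of $F$ restricted to the full (finite) subcategory with object set $E$. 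Dualising and invoking Lemma~\ref{LemAC} (applied to the subcategory $E$, which one may as well view as a finite-dimensional algebra with distinguished idempotent) then yields a super algebra isomorphism
$$(\cO^{V^{\otimes r}})^\ast \;\simeq\; (M^\ast \otimes_A M)^\ast \;\simeq\; \uEnd_{\cB_r(\delta)}(V^{\otimes r}) \;=\; \cS^o_r(V).$$

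For the second isomorphism, the same argument applies verbatim with $E = \{[j] \mid j \in \JJJ(r)\}$, so that now $A = \cB^{\cc}_r(\delta)$ and $M = T^r$, giving $\uEnd_{\cB^{\cc}_r}(T^r) \simeq (\cO^{T^r})^\ast$.

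The only genuinely non-formal input is Theorem~\ref{ThmNew}, which has already been established; beyond that, the argument is essentially a concatenation of two adjunctions (tensor-hom, and finite duality between centraliser coalgebra and centraliser algebra). The only point to check carefully is that the algebra structure on $(\cO^M)^\ast$ induced by dualising the coalgebra structure on $\cO^M$ matches the convolution-type algebra structure on $\uEnd_A(M)$, but this compatibility is exactly what the proof of Lemma~\ref{LemAC} records (the isomorphism there is shown to intertwine the multiplications). So no serious obstacle arises; the lemma is essentially a bookkeeping specialisation of the machinery of Section~\ref{SecCOAL} to the orthosymplectic functor of \eqref{UnivO}.
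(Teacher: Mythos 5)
Your proof is correct and takes exactly the same route as the paper: the paper's proof is just the terse statement ``These are applications of Lemma~\ref{LemNew}, by Theorem~\ref{ThmNew},'' and your write-up simply makes explicit the choices of $E$, the resulting $(A,M)$, and the final dualisation step via Lemma~\ref{LemAC} that the paper leaves implicit.
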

\begin{proof}
These are applications of Lemma~\ref{LemNew}, by Theorem~\ref{ThmNew}.
\end{proof}

\begin{lemma}\label{numbsimp}
If $n\ge r$, for each $\xi=\sum_{i=1}^r \lambda_i\delta_i$  with $\lambda\in \Lambda_r$, the simple module $L_{\GG}(\xi)$ is contained in $\Rep^{(r)}\GG$.
\end{lemma}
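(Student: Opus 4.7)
The plan is to first reduce to the case $r=|\lambda|$, and then to exhibit an explicit $\GG$-primitive vector of weight $\xi_\lambda:=\sum_i\lambda_i\delta_i$ inside $V^{\otimes|\lambda|}$. For the reduction, set $k=|\lambda|$; since $\lambda\in\Lambda_r$ we have $k\in\JJJ(r)$, i.e.\ $r-k=2s$ for some $s\ge 0$. The form on $V$ dualises to a $\GG$-invariant element $\zeta\in V^{\otimes 2}$, so the map $v\mapsto v\otimes \zeta^{\otimes s}$ is a $\GG$-linear embedding $V^{\otimes k}\hookrightarrow V^{\otimes r}$; it therefore suffices to prove that $L_\GG(\xi_\lambda)$ is a composition factor of $V^{\otimes k}$.

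Because $n\ge r\ge k\ge \ell(\lambda)$, the subspace $V_\ob^+\subset V_\ob$ spanned by the $+\delta$-weight vectors $f_1,\ldots,f_n$ (with $f_i$ of weight $\delta_i$) is large enough to carry the classical Weyl/Young-symmetriser construction of an $\Sp(V_\ob)$-primitive vector $w_\lambda\in V_\ob^{\otimes k}$ of weight $\xi_\lambda$; the construction is valid in any characteristic in which the Specht modules are simple, which is a standing assumption of the paper. I would then verify that $w_\lambda$ is in fact primitive for the whole group $\GG$, with respect to the Borel of $\OSp(V)$ in which $\xi_\lambda$ is dominant --- concretely, the Borel with ``$\delta$-above-$\epsilon$'' ordering, in which both $\delta_i-\epsilon_j$ and $\delta_i+\epsilon_j$ are positive odd roots. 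The check splits the positive root vectors into three families: those of $\mathfrak{sp}(V_\ob)$ kill $w_\lambda$ by $\Sp(V_\ob)$-primitivity; those of $\mathfrak{so}(V_\oa)$ act only on the $V_\oa$-factors, which do not occur in $w_\lambda$; and a direct weight-count shows that in this Borel every positive crossed odd root vector maps $V_\oa$ into $V_\ob^+$ but annihilates $V_\ob^+$ itself, so it too kills $w_\lambda$.

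Given that $w_\lambda$ is a non-zero primitive vector of weight $\xi_\lambda$ in $V^{\otimes k}$, the cyclic $\GG$-submodule it generates receives a surjection from the standard (Weyl) module of highest weight $\xi_\lambda$, whose head is $L_\GG(\xi_\lambda)$; hence $L_\GG(\xi_\lambda)$ appears as a composition factor of $V^{\otimes k}$, and therefore by the first step of $V^{\otimes r}$, as required. The delicate step is the Borel check: one must choose a Borel in which simultaneously $\xi_\lambda$ is dominant \emph{and} all odd raising operators annihilate $V_\ob^+$, and then carry out the primitive-vector argument at the level of $\Dist(\cO)$ (using Lemma~\ref{LemDist}) rather than a universal enveloping algebra, so that the conclusion is valid uniformly in positive characteristic.
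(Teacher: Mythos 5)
Your proof is correct but takes a genuinely different route from the paper's, which is essentially a one-liner. The paper sets $\mu:=\lambda^t$ and observes that $\Sym^\mu V:=\Sym^{\mu_1}V\otimes\Sym^{\mu_2}V\otimes\cdots$ has highest weight $\sum_i\lambda_i\delta_i$ (since for $n\ge a$ the top weight of $\Sym^a V$ is $\delta_1+\cdots+\delta_a$, contributed by $\Lambda^a V_\ob\subset\Sym^aV$), is a direct summand of $V^{\otimes|\mu|}$, and hence, via the same form-duality embedding $V^{\otimes|\mu|}\hookrightarrow V^{\otimes r}$ you use, a submodule of $V^{\otimes r}$. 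You instead build an explicit primitive vector by applying a classical Young symmetriser on the Lagrangian $V_\ob^+$ and then run a Borel check. The Borel check does go through, and is in fact a bit less delicate than you fear: in the ``$\delta$-above-$\epsilon$'' Borel, every positive odd root vector $E_{\delta_a\pm\epsilon_j}$ and every $\mathfrak{sp}(V_\ob)$-raising operator $E_{\delta_a+\delta_b}$, $E_{2\delta_a}$ annihilates $V_\ob^+$ outright (applying any of them to a vector of weight $\delta_c$ produces a weight not occurring in $V$), so primitivity reduces to $\mathfrak{gl}(V_\ob^+)$-primitivity, which the Young symmetriser gives. Both proofs thus ultimately exhibit a highest weight vector of weight $\xi$ inside $V^{\otimes r}$; the paper outsources the weight computation to the known structure of the symmetric powers, whereas you make the Borel and the primitive vector visible. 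Your version is substantially longer, but it does have the merit of showing exactly where $n\ge r$ enters and of explaining, via the passage to $\Dist(\cO)$, why the argument is characteristic-free --- a point the paper leaves implicit.
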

\begin{proof}
For $\mu\in\Lambda_r$, the $\GG$-module $\Sym^\mu V$ has highest weight $\sum_{i=1}^r \lambda_i\delta_i$, for $\lambda:=\mu^t$. Since 
$\Sym^\mu V$ is a direct summand of $V^{\otimes |\mu|}$, and hence a submodule of $V^{\otimes r}$, we find that $L_{\GG}(\xi)$ belongs to $\Rep^{(r)}\GG$. 
\end{proof}

\begin{proof}[Proof of Theorem~\ref{ThmOSp}] We freely use the observations in~\ref{defCM}.
Since $V^{\otimes r}$ is a submodule of $T^r$, we have $\cO^{V^{\otimes r}}\subset\cO^{T^r}$. Since $T^r$ is in turn a submodule of a direct sum of copies of $V^{\otimes r}$, the latter inclusion is actually an equality.
Part (i) thus follows from Lemma~\ref{LemOOSp}.

Now we claim that the abelian subcategory scom-$\cO^{V^{\otimes r}}$ of $\Rep\GG$ is the category of all subquotients of direct sums of copies of $V^{\otimes r}$ and $\Pi V^{\otimes r}$. By definition, all such subquotients belong to the subcategory. For arbitrary $\{i_1,\cdots, i_r\}\subset \lbr1,m+2n\rbr^{\times r}$ it follows by direct computation that the morphism
$$\Pi^{\sum _a[i_a]}V^{\otimes r}\to \cO^{V^{\otimes r}},\quad e_{j_1}\otimes\cdots \otimes e_{j_r}\mapsto (-1)^{\sum_{a>b} [i_a][j_b]} X_{i_1 j_1}\cdots X_{i_r j_r}$$
is a comodule morphism in $\svec$. Hence $\cO^{V^{\otimes r}}$, as an object in $\Rep\GG$, is a quotient of a direct sum of copies of $V^{\otimes r}$ and $\Pi V^{\otimes r}$. Since every object in com-$\cO^{V^{\otimes r}}$ is a subobject of a direct sum of copies of $\cO^{V^{\otimes r}}$ and $\Pi \cO^{V^{\otimes r}}$, our claim follows.

Now we take $g\in \GG(\mk)$ as in \ref{DefOSp}. We can correspondingly choose $\cC_1$ as in Lemma~\ref{GenZ2} as the full subcategory of modules for which $\mZ_2=\{\varepsilon,g\}\subset \GG$ acts in the canonical way on the underlying $\mZ_2$-graded space, {\it i.e.} $a_M(v)=(-1)^{|v|}v$ for $v\in M$. Then the full subcategory of $\Rep\GG$ with objects belonging  both to scom-$\cO^{V^{\otimes r}}$ and $\cC_1$ is the one of all subquotients of direct sums of copies of $V^{\otimes r}$, by the claim in the above paragraph. By Lemma~\ref{GenZ2}, this category is equivalent to $\mbox{com-}\cO^{V^{\otimes r}}$.
By Lemma~\ref{LemOOSp} and \cite[\S 3.1]{Abe}, we have an equivalence
$$\cS^{o}_r(V)\mbox{-mod}\;\stackrel{\sim}{\to}\; \mbox{com-}\cO^{V^{\otimes r}},$$
which concludes the proof of part (ii).

By Lemma~\ref{LemTilt1} and Section~\ref{SecTilt} below, the algebra $\cS^o_r(V)$ has at most $|\Lambda_r|$ simple modules up to isomorphism, so Lemma~\ref{numbsimp} describes all simple modules. This proves part (iii).
\end{proof}

\subsection{The general linear case}
\label{SecGLSchur}

We fix $m,n\in\mN$, take $V\in\svec$ of $\dim V=(m|n)$ and set $W=V^\ast$ and $\delta=\sdim(V)$.

\subsubsection{} For $r,s\in\mN$, we set
$$T^{r,s}\;:=\;\bigoplus_{j=0}^{\min(r,s)}V^{\otimes (r-j)}\otimes W^{\otimes (s-j)}.$$
By \eqref{UnivG}, we have algebra morphisms 
$$\cB^{\cc}_{r,s}(\delta)\to \End_{\mk}(T^{r,s})\quad\mbox{and}\quad \cB_{r,s}(\delta)\to\End_{\mk}(V^{\otimes r}\otimes W^{\otimes s}).$$ We define the super algebra
$$\cS^g_{r,s}(V)\;:=\;\uEnd_{\cB_{r,s}}(V^{\otimes r}\otimes W^{\otimes s}).$$

\begin{thm}\label{ThmGL}
Set $\GG=\GL(V)$.
\begin{enumerate}[(i)]
\item We have $\cS^g_{r,s}(V)\simeq \uEnd_{\cB^{\cc}_{r,s}}(T^{r,s})$.
\item If $p\not=2$, the category $\cS^g_{r,s}(V)\mbox{{\rm -mod}}$ is equivalent to the abelian subcategory $\Rep^{(r,s)}\GG$ of modules in $\Rep\GG$ which are subquotients of direct sums of $V^{\otimes r}\otimes W^{\otimes s}$.
\item If $m\ge r$, $n\ge s$ and $p\not\in\lbr2,r\rbr$, the simple module $L_{\GG}(\xi)$ for $\xi\in X^+$ is contained in $\Rep^{(r,s)}\GG$ if and only if $\xi=\sum_{i=1}^r \lambda_i\epsilon_i-\sum_{j=1}^s\mu_{j}\delta_j$ for some $(\lambda,\mu)\in \Lambda_{r,s}$.
\end{enumerate}
\end{thm}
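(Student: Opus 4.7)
The proof parallels that of Theorem~\ref{ThmOSp}, now applied to the monoidal super functor $\cOB(\delta)\to\uRep\GL(V)$ of \eqref{UnivG} and the element $g\in\GL(V)(\mk)$ in~\ref{DefGL} witnessing the hypothesis of Lemma~\ref{GenZ2}. Theorem~\ref{ThmNew} provides the bialgebra isomorphism $\phi$ of Lemma~\ref{LemBIA} in this setting, so Lemma~\ref{LemNew} identifies, with $\cO=\cO[\GL(V)]$,
\[\cS^g_{r,s}(V)\;\simeq\;(\cO^{V^{\otimes r}\otimes W^{\otimes s}})^\ast\qquad\text{and}\qquad\uEnd_{\cB^{\cc}_{r,s}}(T^{r,s})\;\simeq\;(\cO^{T^{r,s}})^\ast.\]
Part (i) then reduces to the equality $\cO^{V^{\otimes r}\otimes W^{\otimes s}}=\cO^{T^{r,s}}$. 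One inclusion is clear since $V^{\otimes r}\otimes W^{\otimes s}$ is the $j=0$ summand of $T^{r,s}$; for the other, iterated insertion of the coevaluation $\mk\to V\otimes W$ followed by appropriate braidings exhibits each $V^{\otimes(r-j)}\otimes W^{\otimes(s-j)}$ as a $\GG$-submodule of $V^{\otimes r}\otimes W^{\otimes s}$.

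For part (ii) we run the proof of Theorem~\ref{ThmOSp}(ii) essentially verbatim. The only new computation is to realize $\cO^{V^{\otimes r}\otimes W^{\otimes s}}$, as an object of $\Rep\GG$, as a quotient of a direct sum of copies of $V^{\otimes r}\otimes W^{\otimes s}$ and $\Pi(V^{\otimes r}\otimes W^{\otimes s})$. This is achieved by the family of explicit comodule morphisms
\[e_{j_1}\otimes\cdots\otimes e_{j_r}\otimes e^\ast_{k_1}\otimes\cdots\otimes e^\ast_{k_s}\;\longmapsto\;\pm\,X_{i_1j_1}\cdots X_{i_rj_r}Y_{l_1k_1}\cdots Y_{l_sk_s},\]
one per choice of index tuple and with appropriate parity shift, whose images together span $\cO^{V^{\otimes r}\otimes W^{\otimes s}}$ by surjectivity of $\phi$. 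Lemma~\ref{GenZ2} applied to the above $g$ then matches $\Rep^{(r,s)}\GG$ with $\mbox{com-}\cO^{V^{\otimes r}\otimes W^{\otimes s}}$, which by \cite[\S 3.1]{Abe} is equivalent to $\cS^g_{r,s}(V)\mbox{-mod}$.

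For part (iii), Lemma~\ref{LemTilt1} together with the analysis of Section~\ref{SecTilt} bounds the number of isomorphism classes of simple $\cS^g_{r,s}(V)$-modules above by $|\Lambda_{r,s}|$. It therefore suffices to produce, for each $(\lambda,\mu)\in\Lambda_{r,s}$, a simple $\GG$-module of highest weight $\xi=\sum_i\lambda_i\epsilon_i-\sum_j\mu_j\delta_j$ inside $\Rep^{(r,s)}\GG$. Under the assumptions $m\ge r$, $n\ge s$ and $p\notin\lbr 2,r\rbr$, the relevant Specht modules are simple, and the associated Schur-type functors yield a summand of $V^{\otimes|\lambda|}\otimes W^{\otimes|\mu|}$ whose highest weight is $\xi$; by part (i) this summand embeds into $V^{\otimes r}\otimes W^{\otimes s}$. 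The main subtlety I expect is consistent bookkeeping of partitions versus their transposes (as in the orthosymplectic case, where the input $\mu$ to $\Sym^\mu V$ satisfies $\lambda=\mu^t$) to match the weight conventions of~\ref{DefGL} and~\cite[Section~4.4]{EHS}.
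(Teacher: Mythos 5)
The paper's own proof of Theorem~\ref{ThmGL} is the single line ``Mutatis mutandis the proof of Theorem~\ref{ThmOSp},'' and your proposal is precisely a careful expansion of that instruction, following the same steps (Lemma~\ref{LemNew} via Theorem~\ref{ThmNew} to identify the Schur algebras with duals of matrix-coefficient coalgebras; exhibiting each summand of $T^{r,s}$ as a submodule of $V^{\otimes r}\otimes W^{\otimes s}$; the explicit comodule surjections onto $\cO^{V^{\otimes r}\otimes W^{\otimes s}}$ plus Lemma~\ref{GenZ2} for part (ii); and the tilting/numbsimp argument for part (iii)). One small citation inaccuracy: for part (iii) you invoke Lemma~\ref{LemTilt1} directly, but that lemma is stated for the ordinary Brauer algebra; what is needed is its walled-Brauer analogue (implicit in the ``adapting appropriately'' behind Theorem~\ref{ThmTiltW}) together with a walled analogue of Lemma~\ref{numbsimp}, which is exactly the content you then sketch with the Schur functor applied to $V^{\otimes|\lambda|}\otimes W^{\otimes|\mu|}$. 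Modulo that substitution and the transpose bookkeeping you already flag, this is the paper's argument.
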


\begin{lemma}
With $\cO:=\cO[\GL(V)]$,
we have $\cS^g_{r,s}(V)\simeq (\cO^{V^{\otimes r} \otimes W^{\otimes s}})^\ast$  and $\uEnd_{\cB^{\cc}_{r,s}}(T^{r,s})\simeq (\cO^{T^{r,s}})^\ast$.\end{lemma}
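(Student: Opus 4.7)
The plan is to proceed exactly as in Lemma~\ref{LemOOSp}: apply Lemma~\ref{LemNew} to the monoidal super functor $\cOB(\delta)\to\uRep\GL(V)$ from \eqref{UnivG} and then dualise using Lemma~\ref{LemAC}.

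First I would invoke Theorem~\ref{ThmNew} to get that the bialgebra morphism $\phi\colon \uCEnd_{\cOB(\delta)}(F)\to\cO$ is an isomorphism, where $F$ is the composition of \eqref{UnivG} with the forgetful functor to $\usvec$. This places us in the setting of Lemma~\ref{LemNew}.

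Next I would apply Lemma~\ref{LemNew} twice, for two different finite subsets $E\subset\Ob\cOB(\delta)$. For the first isomorphism, I take $E=\{\vee^{\otimes r}\otimes\wedge^{\otimes s}\}$, so that $M=V^{\otimes r}\otimes W^{\otimes s}$ and $A=\cB_{r,s}(\delta)$; Lemma~\ref{LemNew} gives a coalgebra isomorphism $M^\ast\otimes_A M\stackrel{\sim}{\to}\cO^M$, and dualising via Lemma~\ref{LemAC} yields
$$(\cO^{V^{\otimes r}\otimes W^{\otimes s}})^\ast\;\simeq\; \uEnd_{\cB_{r,s}}(V^{\otimes r}\otimes W^{\otimes s})\;=\;\cS^g_{r,s}(V).$$
For the second isomorphism, I take $E=\{\vee^{\otimes(r-j)}\otimes\wedge^{\otimes(s-j)}\mid j\in\lbr0,\min(r,s)\rbr\}$, giving $M=T^{r,s}$ and $A=\cB^{\cc}_{r,s}(\delta)$, and repeat the argument to obtain $(\cO^{T^{r,s}})^\ast\simeq\uEnd_{\cB^{\cc}_{r,s}}(T^{r,s})$.

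Since the genuine work (the isomorphism $\phi$ in Theorem~\ref{ThmNew} and the general coalgebra identity of Lemma~\ref{LemNew}) has already been done for arbitrary objects of $\cOB(\delta)$, I do not anticipate a real obstacle here; the only point requiring attention is verifying that the two chosen subsets $E$ recover precisely the algebras $\cB_{r,s}(\delta)$ and $\cB^{\cc}_{r,s}(\delta)$ as defined in Section~\ref{DiagCat}, which is immediate from those definitions.
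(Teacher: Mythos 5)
Your proposal is correct and matches the paper's own (one-line) proof, which simply invokes Lemma~\ref{LemNew} together with Theorem~\ref{ThmNew} exactly as you do, applied to the two choices of $E$ giving $\cB_{r,s}(\delta)$ and $\cB^{\cc}_{r,s}(\delta)$. The extra step you spell out — dualising via Lemma~\ref{LemAC} to pass from the coalgebra isomorphism to the algebra isomorphism — is implicit in the paper and is the right way to read it.
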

\begin{proof}
These are applications of Lemma~\ref{LemNew}, by Theorem~\ref{ThmNew}.\end{proof}

\begin{proof}[Proof of Theorem~\ref{ThmGL}]
Mutatis mutandis the proof of Theorem~\ref{ThmOSp}.
\end{proof}

\begin{rem}
Some of these and other connections between walled Brauer algebras and general linear super groups appear for $\mk=\mC$ in \cite{BS} and \cite{EHS}.
\end{rem}

\subsection{The periplectic case}\label{SecPSchur}
Fix $n,r\in\mN$ and take $V\in \svec$ with odd form and $\dim V=(n|n)$.
\subsubsection{}  By \eqref{UnivP}, we have an algebra morphism
$ \cA_r\to\End_{\mk}(V^{\otimes r}).$ We define the super algebra
$$\cS_r^p(V)\;:=\;\uEnd_{\cA_r}(V^{\otimes r}).$$

\begin{thm}\label{ThmP} Set $\GG=\Pe(V)$ and $\cO=\cO[\GG]$.
\begin{enumerate}[(i)]
\item If $p\not=2$, the category $\cS^p_r(V)\mbox{{\rm -mod}}$ is equivalent to the abelian subcategory $\Rep^{(r)}\GG$ of modules in $\Rep\GG$ which are subquotients of direct sums of $V^{\otimes r}$.
\item We have $\cS_r^p(V)\simeq (\cO^{V^{\otimes r}})^\ast\simeq\uEnd_{\cA_r^{\cc}}(\oplus_{j\in \JJJ(r)}V^{\otimes j}).$
\end{enumerate}
\end{thm}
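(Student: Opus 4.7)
The plan is to proceed \emph{mutatis mutandis} along the lines of the proof of Theorem~\ref{ThmOSp}, with some extra bookkeeping for the super/odd setting. The two isomorphisms in~(ii) will follow from Lemma~\ref{LemNew} (applied to the monoidal super functor \eqref{UnivP} with $E=\{[r]\}$ and $E=[\JJJ(r)]$ respectively), once we know that Theorem~\ref{ThmNew} gives an isomorphism $\phi$ in the periplectic case and once we have established the key identity
\[
\cO^{V^{\otimes r}}\;=\;\cO^{T^r},\qquad \text{where } T^r:=\bigoplus_{j\in\JJJ(r)}V^{\otimes j}.
\]
One direction, $\cO^{V^{\otimes r}}\subset\cO^{T^r}$, is immediate because $V^{\otimes r}$ is a direct summand of $T^r$.

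For the reverse inclusion, I would use that the odd cup $\mk\to V\otimes V$ (the image in $\uRep\GG$ of the generating morphism in~$\cA$) is injective: its image is spanned by the nonzero vector $\sum_{i,j} g^{ij}e_i\otimes e_j$ dual to the form. Tensoring (which is exact) yields, for each $j\ge 0$, an injective morphism $V^{\otimes j}\hookrightarrow V^{\otimes(j+2)}$ in $\uRep\GG$. Iterating, every $V^{\otimes j}$ with $j\in\JJJ(r)$ embeds (possibly as a parity-shifted subobject) in $V^{\otimes r}$, so $T^r$ sits inside a direct sum of copies of $V^{\otimes r}$ and $\Pi V^{\otimes r}$. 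Since $\cO^{\Pi M}=\cO^M$ by~\ref{defCM}, this gives $\cO^{T^r}\subset \cO^{V^{\otimes r}}$, completing~(ii).

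For part~(i), I would follow verbatim the second half of the proof of Theorem~\ref{ThmOSp}. First, by the same explicit comodule-morphism argument (using the coaction formula for $V$ and the defining generators $X_{ij}$ of $\cO[\Pe(V)]$), the subcoalgebra $\cO^{V^{\otimes r}}$, viewed as an object in $\Rep\GG$, is a quotient of a direct sum of copies of $V^{\otimes r}$ and $\Pi V^{\otimes r}$; hence $\mbox{scom-}\cO^{V^{\otimes r}}$ is exactly the abelian subcategory of subquotients of direct sums of $V^{\otimes r}$ and $\Pi V^{\otimes r}$. Next, use Lemma~\ref{GenZ2} with the element $g\in\GG(\mk)$, $g(X_{ij})=(-1)^{[i]}\imath\delta_{ij}$, constructed in the definition of $\Pe(V)$: this requires the hypothesis $p\ne 2$. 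Choosing $\cC_1$ as there, the full subcategory $\cC_1\cap\mbox{scom-}\cO^{V^{\otimes r}}$ consists precisely of subquotients of direct sums of $V^{\otimes r}$, i.e.\ $\Rep^{(r)}\GG$, and Lemma~\ref{GenZ2} identifies it with $\mbox{com-}\cO^{V^{\otimes r}}$. Standard comodule/module Morita duality, combined with~(ii), turns this into the desired equivalence with $\cS_r^p(V)\mbox{-mod}$.

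The main obstacle I anticipate is the one absent in the even-form case: tracking parities along the chain of cup-induced embeddings $V^{\otimes j}\hookrightarrow V^{\otimes r}$, since each cup contributes an odd morphism. The argument only survives because parity shifts are invisible to the centraliser coalgebra ($\cO^{\Pi M}=\cO^M$). Everything else is a routine transcription of the orthosymplectic argument, using Theorem~\ref{ThmNew} and Lemma~\ref{LemNew} in place of Lemma~\ref{LemOOSp}, and using the periplectic $g$ in place of the orthosymplectic one in Lemma~\ref{GenZ2}.
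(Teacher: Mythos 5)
Your proposal is correct and follows exactly the approach the paper intends: the paper's proof of Theorem~\ref{ThmP} is literally ``Mutatis mutandis the proof of Theorem~\ref{ThmOSp},'' and your write-up is the faithful transcription, including the use of Lemma~\ref{LemNew} with Theorem~\ref{ThmNew} for both isomorphisms in~(ii) and the use of Lemma~\ref{GenZ2} with the periplectic element $g(X_{ij})=(-1)^{[i]}\imath\delta_{ij}$ for~(i). You also correctly isolate the one point that genuinely changes in the periplectic setting — the coevaluation $\mk\to V\otimes V$ is an odd morphism, so the resulting embeddings $V^{\otimes j}\hookrightarrow V^{\otimes(j+2)}$ are parity-shifted — and correctly observe that this is harmless because $\cO^{\Pi M}=\cO^M$ (as recorded in~\ref{defCM}), which is exactly why the ``mutatis mutandis'' goes through.
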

\begin{proof}
Mutatis mutandis the proof of Theorem~\ref{ThmOSp}.
\end{proof}

\begin{rem}
We do not study the category $\Rep^{(r)}\GG$ of Theorem~\ref{ThmP}(i) in the current paper. If $\mk=\mC$, a thorough study of $\Rep^{(r)}\GG$ has been made recently in \cite{EnS}, where it is shown in particular that the category is of highest weight type. This justifies Conjecture~\ref{Conj} below.
\end{rem}

\subsection{The queer case}
We fix $n,r\in\mN$ and consider $V\in \svec$ with an odd isomorphism and $\dim V=(n|n)$ and set $W=V^\ast$.
\subsubsection{}  By Section~\ref{SecOBC}, we have a super algebra morphism
$$ \cBC_{r,s}\to\uEnd_{\mk}(V^{\otimes r}\otimes W^{\otimes s}).$$ We define the super algebra
$$\cS_{r,s}^q(V)\;:=\;\uEnd_{\cBC_{r,s}}(V^{\otimes r}\otimes W^{\otimes s}).$$

\begin{thm}\label{ThmQ} Set $\GG=\mathsf{Q}(V)$ and $\cO=\cO[\GG]$.
\begin{enumerate}[(i)]
\item We have $\cS^q_{r,s}(V)\simeq (\cO^{V^{\otimes r}\otimes W^{\otimes s}})^\ast$.
\item The category of modules in $\svec$ of the super algebra $\cS^q_{r,s}(V)$ is equivalent to the abelian subcategory $\Rep^{(r,s)}\GG$ of representations in $\Rep\GG$ which are subquotients of direct sums of $V^{\otimes r}\otimes W^{\otimes s}$ and $\Pi(V^{\otimes r}\otimes W^{\otimes s})$.
\end{enumerate}
\end{thm}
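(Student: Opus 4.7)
The plan is to transcribe the proof of Theorem~\ref{ThmOSp} with two modifications: (a) replace the diagram category $\cB(\delta)$ by $\cOBC$ and the monoidal functor \eqref{UnivO} by the one of Section~\ref{SecOBC}; and (b) \emph{omit} the final application of Lemma~\ref{GenZ2}, since no element $g\in\mathsf{Q}(V)(\mk)$ implements the parity automorphism of $\cO$ (the obvious diagonal candidate from the $\GL(V)$ case fails to commute with the odd endomorphism $q$, which it must do in order to lie in $\mathsf{Q}(V)$). This failure is precisely why the statement of part (ii) retains the parity shift $\Pi(V^{\otimes r}\otimes W^{\otimes s})$, in contrast with Theorems~\ref{ThmOSp}(ii) and \ref{ThmGL}(ii).

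For part (i), I apply Lemma~\ref{LemNew} to $E=\{\vee^{\otimes r}\otimes \wedge^{\otimes s}\}\subset\Ob\cOBC$ and the monoidal super functor $\cOBC\to\uRep\mathsf{Q}(V)$. This functor is one of the four for which Theorem~\ref{ThmNew} asserts that the bialgebra morphism $\phi$ of Lemma~\ref{LemBIA} is an isomorphism. Hence Lemma~\ref{LemNew} yields $M^\ast\otimes_A M\stackrel{\sim}{\to}\cO^M$ for $M:=V^{\otimes r}\otimes W^{\otimes s}$ and $A:=\cBC_{r,s}$. Dualising and invoking Lemma~\ref{LemAC} then gives the super algebra isomorphism
$$(\cO^M)^\ast\;\simeq\;\uHom_A(M,M^{\ast\ast})\;\simeq\;\uEnd_A(M)\;=\;\cS^q_{r,s}(V).$$

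For part (ii), the standard equivalence between finite-dimensional modules in $\svec$ of a finite-dimensional super algebra and finite-dimensional comodules in $\svec$ of the dual coalgebra (\cite[\S 3.1]{Abe}) identifies $\cS^q_{r,s}(V)$-modules in $\svec$ with $\mbox{scom-}\cO^M$. By~\ref{defCM}, the latter is canonically the abelian subcategory of $\Rep\GG=\mbox{scom-}\cO$ consisting of subcomodules of direct sums of the $\cO$-comodules $\cO^M$ and $\Pi\cO^M$. To match this with $\Rep^{(r,s)}\GG$ I would copy the explicit comodule surjection from the proof of Theorem~\ref{ThmOSp} — now written in terms of the generators $X_{ij}$ of $\cO[\mathsf{Q}(V)]$ — to realise $\cO^M$ as a quotient of a direct sum of copies of $M$ and $\Pi M$; together with the obvious reverse containment this identifies the above subcategory with the category of subquotients of direct sums of copies of $M$ and $\Pi M$, which is precisely $\Rep^{(r,s)}\GG$. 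The sole technical obstacle is producing this explicit surjection with correct signs: the defining relations of $\cO[\mathsf{Q}(V)]$ differ from those of $\cO[\OSp(V)]$, so the precise formula and parity conventions must be recalibrated, but no new conceptual ingredient is required beyond careful bookkeeping of the $[\cdot]$-parities of the indices appearing in a monomial in the $X_{ij}$.
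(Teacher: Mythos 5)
Your proposal is correct and follows the same route as the paper: part~(i) via Lemma~\ref{LemNew} and Theorem~\ref{ThmNew} together with Lemma~\ref{LemAC}, and part~(ii) by transcribing the proof of Theorem~\ref{ThmOSp} while staying in $\mbox{scom-}\cO^M$ rather than passing through Lemma~\ref{GenZ2}. Your explanation of \emph{why} Lemma~\ref{GenZ2} is unavailable (any $g$ whose adjoint action is the parity automorphism would anticommute with the odd isomorphism $q$, so $g\notin\mathsf{Q}(V)(\mk)$) is a correct and useful elaboration of what the paper leaves implicit.
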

\begin{proof}
Part (i) is an application of Lemma~\ref{LemNew}, by Theorem~\ref{ThmNew}.. Part (ii) follows as in the proof of Theorem~\ref{ThmOSp}, except that we do not apply Lemma~\ref{GenZ2} as we keep working with scom-$\cO^{M}$.
\end{proof}


\section{Ringel duality for Brauer algebras}\label{SecRingel}

\subsection{The Brauer algebra}
Fix $V\in\svec$ with an even form and set $(m|2n)=\dim V$ and $\delta=\sdim V$. As before, we set $T^r=\oplus_{j\in\JJJ(r)}V^{\otimes j}$. Whenever $T^r$ is considered as a $\cB^{\cc}_r(\delta)$-module its super structure is important in the definition of the action, but we consider it as an ordinary module of the non-super algebra $\cB^{\cc}_r(\delta)$.

By Proposition~\ref{PropCZ}(i) and (ii), the following theorem gives a description of the Ringel dual of the Brauer algebra, for all cases in which it is quasi-hereditary (ignoring trivial cases in which it is semisimple).
\begin{thm}\label{ThmTilt}
If $p\not\in\lbr2,r\rbr$ and $\min(m,n)\ge r$, then $T^r$ is a complete tilting module of $\cB^{\cc}_r(\delta)$.
A Ringel dual of $\cB^{\cc}_r(\delta)$ is thus given by $\cS^o_r(V)$.
\end{thm}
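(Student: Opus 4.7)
The plan is to verify that $T^r$ is a complete tilting module for the quasi-hereditary algebra $(\cB_r^{\cc}(\delta),\le)$ of Proposition~\ref{PropCZ}(iv). The Ringel dual identification then follows at once, since by Theorem~\ref{ThmOSp}(i) we have $\cS^o_r(V)\simeq\uEnd_{\cB^{\cc}_r}(T^r)$, whose underlying ungraded algebra is (the opposite of) $\End_{\cB^{\cc}_r(\delta)}(T^r)$.

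\emph{Standard filtration.} By Proposition~\ref{PropCZ}(iii), $\cB^{\cc}_r(\delta)$ is projective as a right $\cR_r$-module, so $\cB^{\cc}_r(\delta)\otimes_{\cR_r}-$ is exact and sends $S(\lambda)$ to $\Delta(\lambda)$. I would therefore exhibit $T^r$ as $\cB^{\cc}_r(\delta)\otimes_{\cR_r}N$ for some $\cR_r$-module $N$ equipped with a Specht filtration, identifying $N$ with the ``harmonic'' subspace of $T^r$ annihilated by every cap. On this subspace the $\cR_r$-action factors through its quotient $\cH_r\simeq\bigoplus_i\mk\SG_i$, and under the hypotheses $\min(m,n)\ge r$ and $p\notin\lbr 2,r\rbr$ each component inside $V^{\otimes j}$ is a permutation-type $\mk\SG_j$-module with a filtration whose sections are the Specht modules $S(\lambda)$ (with $\lambda\in\Lambda_r$), themselves simple by our assumption on $p$.

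\emph{Costandard filtration.} By Proposition~\ref{PropCZ}(v), the diagrammatic anti-involution $*$ induces a good duality on $(\cB^{\cc}_r(\delta),\le)$ interchanging $\Delta(\lambda)$ with $\nabla(\lambda)$. The form on $V$ yields a $\cB^{\cc}_r(\delta)$-linear isomorphism $V^{\otimes j}\simeq (V^{\otimes j})^*$: cups and caps on $V^{\otimes 2}$ are mutually dual under the form, so the action twisted by $*$ matches the original one. Summing over $j\in\JJJ(r)$, $T^r$ is self-dual, and dualising the standard filtration of the previous step gives a costandard filtration.

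\emph{Completeness.} It remains to show that every indecomposable tilting module occurs as a summand of $T^r$. By Theorem~\ref{ThmOSp}(iii) together with $\min(m,n)\ge r$, the algebra $\cS^o_r(V)$ has exactly $|\Lambda_r|$ isomorphism classes of simple modules, so Theorem~\ref{ThmOSp}(i) forces $T^r$ to decompose into $|\Lambda_r|$ pairwise non-isomorphic indecomposable summands. Since the previous two steps already show $T^r$ is a partial tilting module and $|\Lambda_r|$ is also the total number of indecomposable tiltings of $\cB^{\cc}_r(\delta)$, each must appear.

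The main obstacle is the first step: producing the standard filtration in arbitrary admissible characteristic. The interaction of the cup/cap structure on $V^{\otimes j}$ with the symmetric-group Specht filtration has to be handled carefully, and the right framework is via Proposition~\ref{PropCZ}(iii), reducing the full Brauer action to the much simpler $\cR_r$-action. Steps two and three are then essentially formal consequences.
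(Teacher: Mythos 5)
The self-duality and completeness steps match the paper: self-duality of $T^r$ comes from the form on $V$, and completeness is read off from the simple-module count (Lemma~\ref{numbsimp} combined with Theorem~\ref{ThmOSp}). But your first step — producing a $\Delta$-filtration by exhibiting $T^r$ as $\cB^{\cc}_r(\delta)\otimes_{\cR_r}N$ for $N$ the harmonic (cap-annihilated) subspace — is where the proposal has a genuine gap, and you correctly flag it as the obstacle. The difficulty is not just technical bookkeeping: it is not clear that the natural evaluation map $\cB^{\cc}_r(\delta)\otimes_{\cR_r}N\to T^r$ is an isomorphism. Surjectivity amounts to a harmonic decomposition of tensors and injectivity is a separate, non-formal assertion; establishing both in the range $p\notin\lbr2,r\rbr$ would require a substantial independent argument, essentially reproving a piece of classical invariant theory in a characteristic-dependent way. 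Nothing in Proposition~\ref{PropCZ} alone delivers this.

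The paper sidesteps the $\Delta$-filtration entirely. It first records (Lemma~\ref{LemInjTilt}) that a self-dual $\cB^{\cc}_r(\delta)$-module that is \emph{injective as an $\cR_r$-module} is automatically tilting: by Shapiro's lemma and Proposition~\ref{PropCZ}(iii)--(iv), $\Ext^1_{\cB_r^{\cc}}(\Delta(\lambda),M)\simeq\Ext^1_{\cR_r}(S(\lambda),M)$, so injectivity over $\cR_r$ kills the obstruction in Lemma~\ref{lemtilt}. The real work is then Lemma~\ref{LemOGL}: $T^r$ is filtered by the $\OG(m)\times\GL(2n)$-representations $Q_j$, the $\cR_r$-action on $Q_j$ factors through $A=\End_{\OG(m)\times\GL(2n)}(Q_j)$, and the fundamental theorems of invariant theory for $\OG(m)\times\GL(2n)$ (in the stable range $\min(m,2n)\ge r$) show $A$ is semisimple (when $p\notin\lbr2,r\rbr$) and projective as a right $\cR_r$-module. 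Consequently $\Hom_{\cR_r}(-,Q_j)\simeq\Hom_A(A\otimes_{\cR_r}-,Q_j)$ is exact, so each $Q_j$ and hence $T^r$ is injective over $\cR_r$. This is what replaces your proposed construction of $N$, and it is the ingredient your outline is missing: you need some input from invariant theory for the even part of the super group, and the injectivity-over-$\cR_r$ formulation packages it in a way that works in positive characteristic.
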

We will precede the proof with some lemmata and constructions.

\begin{lemma}\label{LemInjTilt}
If a $\cB^{\cc}_r(\delta)$-module is injective as an $\cR_r$-module and self-dual for the duality in Proposition~\ref{PropCZ}(v), it is a tilting module.
\end{lemma}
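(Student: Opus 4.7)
The strategy is two-fold: first exploit the $\cR_r$-injectivity to produce a $\nabla$-filtration on $M$, then invoke self-duality under the good duality $\ast$ of Proposition~\ref{PropCZ}(v) to promote this to a simultaneous $\Delta$- and $\nabla$-filtration, which is precisely the definition of a tilting module.

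For the first step, the plan is to verify the standard Ext-vanishing criterion for costandard filtrations over the quasi-hereditary algebra $(\cB^{\cc}_r(\delta),\le)$ of Proposition~\ref{PropCZ}(iv), namely that $\mathrm{Ext}^1_{\cB^{\cc}_r(\delta)}(\Delta(\lambda),M)=0$ for every $\lambda\in\Lambda_r$. Recall from Proposition~\ref{PropCZ}(iv) that $\Delta(\lambda)=\cB^{\cc}_r(\delta)\otimes_{\cR_r}S(\lambda)$, and from Proposition~\ref{PropCZ}(iii) that $\cB^{\cc}_r(\delta)$ is projective as a right $\cR_r$-module. Consequently, induction $\cB^{\cc}_r(\delta)\otimes_{\cR_r}-$ is exact and sends projective $\cR_r$-modules to projective $\cB^{\cc}_r(\delta)$-modules, so applying it to a projective $\cR_r$-resolution of $S(\lambda)$ yields a projective $\cB^{\cc}_r(\delta)$-resolution of $\Delta(\lambda)$. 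Frobenius reciprocity on Hom therefore derives to an isomorphism
\begin{equation*}
\mathrm{Ext}^i_{\cB^{\cc}_r(\delta)}(\Delta(\lambda),M)\;\simeq\;\mathrm{Ext}^i_{\cR_r}(S(\lambda),M)\qquad(i\ge 0),
\end{equation*}
and the right-hand side vanishes for $i\ge 1$ exactly because $M$ is injective as an $\cR_r$-module.

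For the second step, the contravariant endofunctor of $\cB^{\cc}_r(\delta)$-mod induced by the good duality $\ast$ sends $\nabla(\lambda)$ to $\Delta(\lambda)$ by the defining property of goodness, hence transports the $\nabla$-filtration of $M$ to a $\Delta$-filtration of $M^\ast$. Since $M\simeq M^\ast$ by hypothesis, $M$ simultaneously carries a $\Delta$- and a $\nabla$-filtration, so it is tilting. I do not foresee a serious obstacle: the whole argument rests on three inputs that are already in place, namely the right-$\cR_r$-projectivity in Proposition~\ref{PropCZ}(iii), the quasi-hereditary structure in Proposition~\ref{PropCZ}(iv), and the exchange property of the good duality of Proposition~\ref{PropCZ}(v). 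The only care needed is to confirm that Frobenius reciprocity really does derive to Ext, which is immediate once one observes that tensoring a projective $\cR_r$-module with $\cB^{\cc}_r(\delta)$ over $\cR_r$ yields a projective $\cB^{\cc}_r(\delta)$-module.
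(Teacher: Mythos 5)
Your proposal is correct and takes essentially the same route as the paper's proof: both establish $\Ext^1_{\cB^{\cc}_r(\delta)}(\Delta(\lambda),M)\simeq\Ext^1_{\cR_r}(S(\lambda),M)$ from the $\cR_r$-projectivity of $\cB^{\cc}_r(\delta)$ (Proposition~\ref{PropCZ}(iii), Shapiro's lemma / derived Frobenius reciprocity), observe that the right-hand side vanishes because $M$ is $\cR_r$-injective, and then combine this Ext-vanishing with self-duality. The only stylistic difference is that the paper packages the second step by citing Lemma~\ref{lemtilt}, whereas you unpack it directly: the Ext-vanishing yields a $\nabla$-filtration, and applying the good duality transports it to a $\Delta$-filtration of $\dd M\simeq M$. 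One small slip of labeling: the good duality $\dd$ sends $\nabla(\lambda)$ to $\Delta(\lambda^\varphi)$ for the induced permutation $\varphi$ of the label set, not necessarily to $\Delta(\lambda)$ itself; since $\varphi$ is a poset automorphism this does not affect the conclusion that $M$ has both a $\Delta$- and a $\nabla$-filtration.
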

\begin{proof}
By Lemma~\ref{lemtilt}, a self-dual module $M$ is tilting if
$$\Ext^1_{\cB_r^{\cc}}(\Delta(\lambda),M)\;\simeq\; \Ext^1_{\cR_r}(S(\lambda), M)$$
vanish for all $\lambda\in\Lambda_r$. The isomorphism follows from Proposition~\ref{PropCZ}(iv) and (iii) and Shapiro's lemma. This concludes the proof.
\end{proof}

\subsubsection{} The tensor algebra of $V$ has an $\mN$-grading defined by placing $V_{\oa}$ in degree $0$ and $V_{\ob}$ in degree $1$. We have a corresponding grading of vector spaces,
$T^r=\oplus_{i=0}^{r} T^r[i].$
This allows to define a filtration of $T^r$ as an $\cR_r$-module
$$0=F_{-1}T^r\subset F_0 T^r\subset\cdots \subset F_rT^r=T^r,\quad\mbox{with}\quad F_j T^r\;=\;\bigoplus_{i=0}^{j} T^r[i]. $$
Note that $T^r$ has a canonical structure of an $\OG(m)\times \GL(2n)$-representation and each $T^r[i]$ is a direct summand of this representation. In particular $F_\bullet T^r$ is also a filtration of the $\OG(m)\times \GL(2n)$ representation $T^r$. We set $Q_j:= F_j T^r/F_{j-1}T^r$ and consider it as an $\cR_r$-module and $\OG(m)\times \GL(2n)$-representation.

\begin{lemma}\label{LemOGL}
For $0\le j\le r$, the image of 
$\cR_r\to \End_{\mk}(Q_j )$
is contained in $A:=\End_{\OG(m)\times \GL(2n)}(Q_j)$. If $\min(m,2n)\ge r$ this morphism $\cR_r\to A$ makes $A$ projective as a right $\cR_r$-module.
\end{lemma}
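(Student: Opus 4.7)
For the first assertion, my plan is to analyse the action of $\cR_r$ on the associated graded piece $Q_j$. Elements of $\cR_r$ are linear combinations of diagrams without cups, so their action on $V^{\otimes k} \subset T^r$ consists of permutations of tensor factors together with applications of the cap $\cap = \langle\cdot,\cdot\rangle : V^{\otimes 2}\to\mk$ on adjacent bottom positions. Since the form is even, $\cap$ vanishes on factors of mismatched parity, so on $Q_j = T^r[j]$ the only surviving caps are those pairing two $V_{\oa}$-factors (caps pairing two $V_{\ob}$-factors drop into $F_{j-1}$ and hence vanish in $Q_j$). Permutations of tensor positions commute with $G := \OG(m)\times\GL(2n)$ trivially, while the $V_{\oa}$-caps commute with $G$ because they use only the $\OG(m)$-invariant form on $V_{\oa}$ and leave all $V_{\ob}$-factors untouched. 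This gives the containment in $A$.

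For the second assertion, my plan is to first identify $A$ concretely via classical invariant theory. Using the decomposition
\[
Q_j \;\cong\; \bigoplus_{k,\,S} V_{\oa}^{\otimes(k-j)} \otimes V_{\ob}^{\otimes j},
\]
where $k$ ranges over $\JJJ(r)$ with $k\ge j$ and $S$ over subsets of $[k]$ of size $j$, the hypothesis $\min(m,2n)\ge r$ lets me apply the first fundamental theorem for $\OG(m)$ (valid since $m\ge r\ge k-j$) together with Schur--Weyl duality for $\GL(2n)$ (valid since $2n\ge r\ge j$). This yields a basis of $A$ indexed by tuples $(k,k',S,S',d,\sigma)$, where $d$ is a Brauer diagram on $(k-j)+(k'-j)$ points encoding the $V_{\oa}$-morphism and $\sigma\in\SG_j$ is a permutation encoding the $V_{\ob}$-morphism.

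To conclude projectivity over $\cR_r$, my plan is to exhibit $A$ as a direct summand of a projective right $\cR_r$-module, the natural candidate being a suitable associated graded of $\cB_r^{\cc}(\delta)$, which is itself projective over $\cR_r$ by Proposition~\ref{PropCZ}(iii). Concretely, the $\ob$-filtration on $T^r$ induces a filtration on $\cB_r^{\cc}(\delta)$ through its action, and the $j$-th graded piece should be identifiable with (a matrix algebra over) $A$ after factoring out the position multiplicities. The main obstacle, I expect, will be constructing an explicit right-$\cR_r$-linear splitting of this induced filtration on $\cB_r^{\cc}(\delta)$: the filtration is defined via the $\OSp(V)$-representation theory rather than intrinsically on the diagram algebra, so the splitting must be produced by a careful combinatorial argument on diagram bases, tracking how cups on $V_{\oa}$-positions (present in $A$ but absent in $\cR_r$) are absorbed by right multiplication by elements of $\cB_r^{\cc}(\delta)$. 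Once such a splitting is produced, projectivity of each graded piece, in particular of $A$, follows from projectivity of $\cB_r^{\cc}(\delta)$.
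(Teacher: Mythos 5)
The first assertion and the invariant-theoretic identification of $A$ are handled essentially as in the paper: you correctly observe that caps pairing $V_{\ob}$-factors die in the associated graded piece $Q_j$, that the surviving caps use only the $\OG(m)$-invariant form on $V_{\oa}$, and that the fundamental theorem for $\OG(m)$ together with Schur--Weyl duality for $\GL(2n)$ (valid since $\min(m,2n)\ge r$) describes $A$ by ``coloured'' Brauer-type diagrams. So far this tracks the paper's argument.

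The projectivity step, however, has a genuine gap that you yourself flag. Your plan is to endow $\cB_r^{\cc}(\delta)$ with a filtration induced by the $\ob$-degree on $T^r$, identify the $j$-th graded piece with (a matrix algebra over) $A$, and then produce a right-$\cR_r$-linear splitting. There is no obvious reason such a splitting exists, and the filtration is not intrinsic to the diagram algebra, which makes this route unwieldy. The paper sidesteps it entirely with a two-step Morita argument: first it observes that $A$ is Morita equivalent to $fAf$, where $f$ is the sum of the $A$-diagrams with only non-crossing propagating lines and all $j$ white dots on the left. Since $f$ is full, $A$ is a right-$\cR_r$-module direct summand of a finite direct sum of copies of $fA$, so it suffices to prove $fA_{\cR_r}$ is projective. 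The second step is then entirely combinatorial: after the Morita reduction, all white dots sit in fixed positions at the left of each row, and the map that \emph{forgets the colours} of the dots identifies $fA$ with a direct summand of $\cB_r^{\cc}(\delta)$ as a right $\cR_r$-module; projectivity then follows from Proposition~\ref{PropCZ}(iii). This colour-forgetting isomorphism is the idea your proposal is missing, and it removes the need for any filtration or splitting on $\cB_r^{\cc}(\delta)$ altogether.
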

\begin{proof}
For simplicity, we use the canonical isomorphism of vector spaces $T^r[j]\stackrel{\sim}{\to}Q_j$.
Set $\GG=\OG(V_{\oa})\times \GL(V_{\ob})$. It suffices to show that a set of generators of $\cR_r$ is mapped to elements in $\End_{\GG}(T^r[j])$. We take a basis of $T^r[j]$ induced from a homogeneous basis of $V$. 
A diagram in $\cR_r$ consisting of one cap and otherwise only non-crossing propagating lines yields a morphism which is zero on basis elements unless the cap is evaluated on two elements of copies of $V_{\oa}$. In the latter case, we get the evaluation of the $\OG(V_{\oa})$-invariant bilinear form on $V_{\oa}$. Diagrams which belong to $\cH$ are mapped to braiding morphisms in $\Rep\GG$. The above two types of diagrams generate the algebra $\cR_r$.

If $\min(m,2n)\ge r$, the fundamental theorems of invariant theory, see e.g. \cite[Theorem~4.2 and Theorem~5.7]{Concini} imply that the algebra $A$ can be described diagrammatically as follows. 
Consider $i$ dots on a horizontal line and $k$ dots on a parallel line above the first one. On both lines there are $j$ white dots and the others are black. We define an ``$A$-diagram'' to be a graphical representation of a pairing of the dots such that each white dot is connected with a white dot on the other line. If $j=2$, an example of an $A$-diagram is
$$\begin{tikzpicture}[scale=0.9,thick,>=angle 90]
\begin{scope}[xshift=4cm]
\node at (0,0) {$\circ$};
\node at (1,0) {$\bullet$};
\node at (2,0) {$\circ$};
\node at (-1,2) {$\circ$};
\node at (0,2) {$\bullet$};
\node at (1,2) {$\bullet$};
\node at (2,2) {$\circ$};
\node at (3,2) {$\bullet$};
\draw (2,0) to [out=120, in=-60] +(- 3,2);
\draw (0,0) to [out=70, in=-110] +(2,2);
\draw  (1,0) -- +(0,2);
\draw (0,2) to [out=-70,in=180] +(1.5,-0.8) to [out=0,in=-110] +(1.5,0.8);
\end{scope}
\end{tikzpicture}
$$
The space $A$ has a basis consisting of all $A$-diagrams, with at most $r$ (and at least $j$) dots on each line and on each line a total number of dots in $\JJJ(r)$. The product of two diagrams is zero unless the dots on the lower line of the left diagram match the dots on the upper line of the right diagram. When the dots match, the product is given by concatenation, with evaluation of loops at $m$. We have an obvious interpretation of $A$-diagrams as morphisms. The above diagram is a morphism 
$$V_{\ob}\otimes V_{\oa}\otimes V_{\ob}\;\to\; V_{\ob}\otimes V_{\oa}^{\otimes 2}\otimes V_{\ob}\otimes V_{\oa}.$$
In particular, the morphism $\cR_r\to A$ is determined by the local relations
$$\begin{tikzpicture}[scale=0.9,thick,>=angle 90]
\begin{scope}[xshift=4cm]
\draw (1,0) to [out=90,in=-180] +(0.5,0.8) to [out=0,in=90] +(0.5,-0.8);
\node at (3,1) {$\mapsto$};
\draw (4,0) to [out=90,in=-180] +(0.5,0.8) to [out=0,in=90] +(0.5,-0.8);
\node at (4,0) {$\bullet$};
\node at (5,0) {$\bullet$};
\node at (6.5,1) {and};
\draw (8,0) to [out=70, in=-110] +(1,2);
\draw (9,0) to [out=110, in=-70] +(-1,2);
\node at (10,1) {$\mapsto$};
\draw (11,0) to [out=70, in=-110] +(1,2);
\draw (12,0) to [out=110, in=-70] +(-1,2);
\node at (11,2) {$\bullet$};
\node at (12,2) {$\bullet$};
\node at (11,0) {$\bullet$};
\node at (12,0) {$\bullet$};
\node at (12.5,1) {$+$};
\draw (13,0) to [out=70, in=-110] +(1,2);
\draw (14,0) to [out=110, in=-70] +(-1,2);
\node at (13,2) {$\circ$};
\node at (14,2) {$\circ$};
\node at (13,0) {$\circ$};
\node at (14,0) {$\circ$};
\node at (14.5,1) {$+$};
\draw (15,0) to [out=70, in=-110] +(1,2);
\draw (16,0) to [out=110, in=-70] +(-1,2);
\node at (15,2) {$\bullet$};
\node at (16,2) {$\circ$};
\node at (15,0) {$\circ$};
\node at (16,0) {$\bullet$};
\node at (16.5,1) {$+$};
\draw (17,0) to [out=70, in=-110] +(1,2);
\draw (18,0) to [out=110, in=-70] +(-1,2);
\node at (17,2) {$\circ$};
\node at (18,2) {$\bullet$};
\node at (17,0) {$\bullet$};
\node at (18,0) {$\circ$};
\end{scope}
\end{tikzpicture}
$$

Now we prove that $A_{\cR_r}$ is projective. First observe that, by definition, $A$ is Morita equivalent to its centraliser algebra $fAf$ with $f\in A$ the sum over all diagrams with only non-intersecting propagating lines with the $j$ white dots on the left. It thus suffices to prove that $fA_{\cR_r}$ is projective. It follows easily that $fA$ is isomorphic to a direct summand of $\cB_r^{\cc}(\delta)$ as a right $\cR_r$-module, with the isomorphism realised by forgetting the colour of the dots. This means the claim follows from Proposition~\ref{PropCZ}(iii). 
\end{proof}

\begin{lemma}\label{LemTilt1}
If $\min(m,2n)\ge r$ and $p\not\in\lbr2,r\rbr$, then $T^r$ is a tilting module for $\cB^{\cc}_r(\delta)$.
\end{lemma}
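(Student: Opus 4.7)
By Lemma~\ref{LemInjTilt} it suffices to verify that $T^r$ is (a) self-dual under the $\ast$-duality of Proposition~\ref{PropCZ}(v) and (b) injective as a left $\cR_r$-module.

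For (a), the even non-degenerate form on $V$ identifies $V\simeq V^\ast$ as $\OSp(V)$-modules, so after iterating and summing over $i\in\JJJ(r)$ one obtains $T^r\simeq (T^r)^\ast$. Under the functor~\eqref{UnivO} the reflection anti-involution $\ast$ on Brauer diagrams corresponds exactly to the adjoint with respect to $\langle\cdot,\cdot\rangle$: caps and cups are mutually adjoint via the form, while crossings are self-adjoint. Consequently the isomorphism $T^r\simeq (T^r)^\ast$ is $\cB^{\cc}_r(\delta)$-equivariant for the $\ast$-twisted action.

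For (b), I will use the filtration $F_\bullet T^r$ of $\cR_r$-modules with subquotients $Q_j$. Applying the long exact sequence of $\Ext^\bullet_{\cR_r}(N,-)$ for each simple $\cR_r$-module $N$ to the short exact sequence $0\to F_{j-1}T^r\to F_jT^r\to Q_j\to 0$ and inducting on $j$ reduces the claim to showing each $Q_j$ is injective as an $\cR_r$-module. For this, Lemma~\ref{LemOGL} provides an algebra $A:=\End_{\OG(m)\times\GL(2n)}(Q_j)$ through which $\cR_r$ acts on $Q_j$ and which is projective as a right $\cR_r$-module; hence $A\otimes_{\cR_r}-$ is exact and its right adjoint -- the restriction functor $A\mbox{-mod}\to\cR_r\mbox{-mod}$ -- preserves injective objects. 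It therefore suffices to prove $Q_j$ is injective as an $A$-module. Via the Morita equivalence $A\mbox{-mod}\simeq fAf\mbox{-mod}$ from the proof of Lemma~\ref{LemOGL}, a direct diagrammatic computation identifies $fAf$ with $\cB^{\cc}_{r-j}(m)\otimes\mk\SG_j$ -- the symmetric group factor permutes the $j$ white strands, the Brauer factor handles the pairings of the black strands evaluated at $\delta=m$ -- under which $fQ_j$ corresponds to $T_{\oa}^{r-j}\otimes V_{\ob}^{\otimes j}$, where $T_{\oa}^{r-j}:=\bigoplus_{k\in\JJJ(r-j)}V_{\oa}^{\otimes k}$. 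The hypothesis $p\not\in\lbr2,r\rbr$ makes $\mk\SG_j$ semisimple, so the $V_{\ob}$-factor is injective over it; the remaining question is whether $T_{\oa}^{r-j}$ is injective over $\cB^{\cc}_{r-j}(m)$, i.e.\ the $n=0$ specialisation of the theorem.

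\textbf{Main obstacle.} This last reduction is delicate, because a complete tilting module need not be injective in the full module category of a quasi-hereditary algebra, so one cannot simply quote the non-super classical result and conclude. The natural remedy is to arrange matters so that only a Borelic-type injectivity (analogous to property (b) itself) needs to propagate through the Morita equivalence: this means replacing $A$ by an appropriate ``Borelic'' subalgebra, built from the image of the $\cR_r$-diagrams acting on $Q_j$, and showing that Lemma~\ref{LemOGL}'s projectivity statement together with the identification of $fAf$ reduces cleanly to the $\cR_{r-j}$-injectivity of $T_{\oa}^{r-j}$ -- which is precisely (b) for $n=0$ and so an instance of a genuine induction. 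Verifying the compatibility of these two Borelic structures (on $\cB^{\cc}_r(\delta)$ and on $A$) and carrying them through the Morita equivalence is the technical heart of the argument.
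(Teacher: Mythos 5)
Your plan matches the paper's up to the reduction: you correctly apply Lemma~\ref{LemInjTilt} after checking self-duality, correctly pass to the filtration $F_\bullet T^r$ to reduce to injectivity of each $Q_j$ over $\cR_r$, and correctly use Lemma~\ref{LemOGL} together with the adjunction $\Hom_{\cR_r}(-,Q_j)\simeq\Hom_A(A\otimes_{\cR_r}-,Q_j)$ so that the remaining task is to show $Q_j$ is injective over $A=\End_{\OG(m)\times\GL(2n)}(Q_j)$. At that point the proofs diverge and yours does not close.

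The missing observation is that under the hypotheses $\min(m,2n)\ge r$ and $p\not\in\lbr2,r\rbr$, the algebra $A$ is \emph{semisimple}, so that every $A$-module -- in particular $Q_j$ -- is automatically injective; the functor $\Hom_A(-,Q_j)$ is exact for free, and the proof ends there. (The paper cites Rui's semisimplicity criterion for this.) Instead of invoking this, you pass through the Morita equivalence $A\simeq fAf$, identify $fAf$ with something like $\cB^{\cc}_{r-j}(m)\otimes\mk\SG_j$, and try to recurse to the ``$n=0$ case'' of the lemma. This fails on two counts. First, as you yourself note, a complete tilting module of a quasi-hereditary algebra is not generally injective, so the reduction cannot be closed by quoting a tilting statement for $\cB^{\cc}_{r-j}(m)$. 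Second, the purported ``genuine induction'' is not available: the $n=0$ specialisation of the lemma's hypothesis $\min(m,2n)\ge r-j$ forces $r=j$, so the base of the claimed induction is empty for the cases you would need. The ``main obstacle'' you flag is therefore not a technical wrinkle to be resolved by carrying Borelic structures through the Morita equivalence; it is the sign that the route is wrong, and the correct move is to observe that $A$ is already semisimple.
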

\begin{proof}
It follows from a straightforward computation that the canonical isomorphism from $T^r$ to $(T^r)^\ast$, induced by $V\to V^\ast$ with $v\mapsto \langle v,\cdot\rangle$, makes $T^r$ self-dual with respect to the duality in Proposition~\ref{PropCZ}(v).
By Lemma~\ref{LemInjTilt} it thus suffices to prove that $T^r$ is injective as a $\cR_r$-module.
We prove the stronger statement that each $\cR_r$-module $Q_j$ is injective.
Under the assumptions on $p$, the algebras $A$ in Lemma~\ref{LemOGL} are semisimple, see e.g. \cite{Rui}.
Consequently, by Lemma~\ref{LemOGL}, the functor
$$\Hom_{\cR_r}(-, Q_j)\;\simeq\; \Hom_{A}(A\otimes_{\cR_r}-,Q_j)$$
is exact and hence $Q_j$ is injective as a $\cR_r$-module.
\end{proof}
\begin{proof}[Proof of Theorem~\ref{ThmTilt}]
By Lemma~\ref{LemTilt1}, the $\cB^{\cc}_r(\delta)$-module $T^r$ is tilting. By Lemma~\ref{numbsimp},
$\End_{\cB^{\cc}_r}(T^r)$ has as many simple modules as $\cB^{\cc}_r(\delta)$, which implies that $T^r$ is complete.
\end{proof}

\subsection{The walled Brauer algebra}
Resume the notation of Subsection~\ref{SecGLSchur}.
By equipping the tensor algebra of $V\oplus W$ with an $\mN$-grading with degree of $V_{\oa}\oplus W_{\oa}$ equal to $0$ and degree of $V_{\ob}\oplus W_{\ob}$ equal to $1$, we get a filtration of $T^{r,s}$ as a $\GL(V_{\oa})\times \GL(V_{\ob})\times \GL(W_{\ob})$-representation.
By adapting the proof of Theorem~\ref{ThmTilt} appropriately, we find the following result.
\begin{thm}\label{ThmTiltW}
Assume $p\not\in\lbr2,\max(r,s)\rbr$, $m\ge r+s$, $n\ge \max(r,s)$ and $m-n=\delta$. Then $T^{r,s}$ is a complete tilting module of $\cB^{\cc}_{r,s}(\delta)$.
A Ringel dual of $\cB^{\cc}_{r,s}(\delta)$ is thus given by $\cS^g_{r,s}(V)$.
\end{thm}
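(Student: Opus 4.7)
The plan is to mirror the proof of Theorem~\ref{ThmTilt} step by step, with the bigrading on the tensor algebra of $V\oplus W$ replacing the grading of $V$, and with the walled analogues of $\cR_r$ and of the good duality on $\cB^{\cc}_r(\delta)$ (both available from \cite[\S 8]{Borelic}) replacing the unoriented ones. Let $\cR_{r,s}$ denote the dense subalgebra of $\cB^{\cc}_{r,s}(\delta)$ spanned by walled Brauer diagrams containing no cups. By the walled analogues of Proposition~\ref{PropCZ}(iii)--(v), $\cB^{\cc}_{r,s}(\delta)$ is projective as a right $\cR_{r,s}$-module and carries a good duality induced by reflection of oriented diagrams; the evaluation pairing $V\otimes W\to\mk$ and its transpose identify $T^{r,s}$ with its own dual, so an oriented analogue of Lemma~\ref{LemInjTilt} reduces tiltingness of $T^{r,s}$ to injectivity as an $\cR_{r,s}$-module.

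To establish this injectivity I would introduce the filtration $F_\bullet T^{r,s}$ coming from the $\mN$-grading on the tensor algebra of $V\oplus W$ that places $V_{\oa}\oplus W_{\oa}$ in degree zero and $V_{\ob}\oplus W_{\ob}$ in degree one. The associated graded pieces $Q_j$ are representations of $\GG_0:=\GL(V_{\oa})\times\GL(V_{\ob})\times\GL(W_{\ob})$, where $V_{\ob}$ and $W_{\ob}$ are treated as independent after passing to the associated graded. Copying Lemma~\ref{LemOGL}, the action of $\cR_{r,s}$ on $Q_j$ factors through $A:=\End_{\GG_0}(Q_j)$: caps in $\cR_{r,s}$ are sent to the $\GL(V_{\oa})$-invariant evaluation $V_{\oa}\otimes W_{\oa}\to\mk$ (and vanish whenever the cap connects slots of different parities on the graded), while crossings are mapped to the symmetric braiding on $\Rep\GG_0$.

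Under the hypotheses $m\ge r+s$, $n\ge\max(r,s)$ and $p\not\in\lbr 2,\max(r,s)\rbr$, the first and second fundamental theorems of invariant theory for products of general linear groups describe $A$ diagrammatically via ``bi-coloured'' oriented walled-Brauer diagrams, and the semisimplicity results for walled Brauer algebras ensure that $A$ itself is semisimple. As in the proof of Lemma~\ref{LemOGL}, an obvious idempotent truncation of $A$ is isomorphic to a direct summand of $\cB^{\cc}_{r,s}(\delta)$ as a right $\cR_{r,s}$-module via the colour-forgetting map, so $A$ is projective over $\cR_{r,s}$. The tensor--hom adjunction $\Hom_{\cR_{r,s}}(-,Q_j)\simeq\Hom_A(A\otimes_{\cR_{r,s}}-,Q_j)$ is then exact, yielding the required injectivity of each $Q_j$ and hence of $T^{r,s}$.

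Tiltingness of $T^{r,s}$ then follows, and completeness is a consequence of Theorem~\ref{ThmGL}(iii), which provides exactly $|\Lambda_{r,s}|$ simple $\cS^g_{r,s}(V)$-modules, matching the simple count for $\cB^{\cc}_{r,s}(\delta)$. The main obstacle I anticipate is the explicit diagrammatic presentation of $A$ and the verification of its projectivity over $\cR_{r,s}$ via the colour-forgetting idempotent truncation; once this is in place everything else is a routine oriented adaptation of the arguments used in the unoriented case.
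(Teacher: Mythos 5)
Your proposal follows essentially the same approach as the paper: the paper's own proof of Theorem~\ref{ThmTiltW} is just the statement that one should equip the tensor algebra of $V\oplus W$ with the $\mN$-grading placing $V_{\oa}\oplus W_{\oa}$ in degree $0$ and $V_{\ob}\oplus W_{\ob}$ in degree $1$, obtain a filtration of $T^{r,s}$ as a $\GL(V_{\oa})\times\GL(V_{\ob})\times\GL(W_{\ob})$-representation, and ``adapt the proof of Theorem~\ref{ThmTilt} appropriately.'' Your write-up fills in those details — the walled analogue of $\cR_r$ and its good duality from \cite[\S 8]{Borelic}, the oriented analogue of Lemma~\ref{LemInjTilt}, the identification of the image of $\cR_{r,s}$ in $\End(Q_j)$ inside the centraliser algebra $A$ of $\GL(V_{\oa})\times\GL(V_{\ob})\times\GL(W_{\ob})$ via the fundamental theorems for $\GL$, the idempotent truncation identifying $fA$ with a summand of $\cB^{\cc}_{r,s}(\delta)$ as a right $\cR_{r,s}$-module, and the simple count giving completeness — exactly in the spirit the paper intends.

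For the record, one minor inaccuracy worth correcting when you write this up in full: the cap in $\cR_{r,s}$ acts on the associated graded $Q_j$ via the evaluation pairing $W_{\oa}\otimes V_{\oa}\to\mk$ (not $V_{\oa}\otimes W_{\oa}$), and one should verify carefully that this vanishes on $W_{\ob}\otimes V_{\ob}$ in the associated graded (it lowers the grading degree by $2$) so that only the $\GL(V_{\oa})$-invariant even pairing contributes; this is precisely why the group can be taken as $\GL(V_{\oa})\times\GL(V_{\ob})\times\GL(W_{\ob})$ rather than a smaller group preserving the full pairing. Also, appealing to Theorem~\ref{ThmGL}(iii) for the completeness step is fine, but its proof in the paper already uses the walled analogue of Lemma~\ref{LemTilt1}; the cleaner formulation is to invoke the walled analogue of Lemma~\ref{numbsimp} directly, as in the proof of Theorem~\ref{ThmTilt}, so that the logical dependence is transparent.
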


\subsection{The periplectic Brauer algebra}

It was proved in \cite{PB1} that $\cA_r^{\cc}$ is quasi-hereditary when $p\not\in\lbr2,r\rbr$ and Morita equivalent to $\cA_r$ when $r$ is odd. Resume the notation of Subsection~\ref{SecPSchur}.
\begin{conj}\label{Conj}
Take $n\ge 2 r $ and assume $p\not\in\lbr2,r\rbr$. A Ringel dual of $\cA_r^{\cc}$ is given by $\cS^p_r(V)$ for $V\in\svec$ with $\dim V=(n|n)$ equipped with an odd form.
\end{conj}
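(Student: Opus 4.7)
The plan is to transport the proof strategy of Theorem~\ref{ThmTilt} to the periplectic setting. First I would introduce the dense subcategory $\cR^p\subset \cA$ spanned by periplectic diagrams containing no cups, giving subalgebras $\cR^p_r$ and, on the objects $[\JJJ(r)]$, an algebra parallel to $\cR_r$. With $(\cA_r^{\cc}, \cR^p_r)$ in hand, the preliminary task is to verify the periplectic analogues of Proposition~\ref{PropCZ}(iii)--(v): that $\cA_r^{\cc}$ is projective as a right $\cR^p_r$-module, that the standard modules of the quasi-hereditary structure obtained in \cite{PB1} are realised as $\cA_r^{\cc}\otimes_{\cR^p_r} S(\lambda)$ for $\lambda\in \Lambda_r$, and that diagrammatic reflection (perhaps up to a sign twist by parity) provides a good duality. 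This reduces tiltingness of a self-dual $\cA_r^{\cc}$-module to its injectivity as an $\cR^p_r$-module, exactly as in Lemma~\ref{LemInjTilt}.

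Next I would show that $T^r := V^{\otimes r}$ is self-dual for this duality. The odd form on $V$ gives an odd isomorphism $V\stackrel{\sim}{\to} V^\ast$, i.e.\ an isomorphism $V\stackrel{\sim}{\to} \Pi V^\ast$ in $\svec$, which tensors up to an isomorphism $T^r\stackrel{\sim}{\to} \Pi^r (T^r)^\ast$. After forgetting the super grading this yields self-duality as $\cA_r^{\cc}$-modules, and compatibility with the good duality is checked on generators.

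The core step is proving injectivity of $T^r$ over $\cR^p_r$. Following Lemma~\ref{LemOGL}, place $V_{\oa}$ in degree $0$ and $V_{\ob}$ in degree $1$ to obtain a filtration $F_\bullet T^r$ whose associated graded pieces $Q_j$ carry canonical $\GL(V_{\oa})\times \GL(V_{\ob})$-actions. The hypothesis $n\ge 2r$ places us in the stable range of the first and second fundamental theorems of invariant theory for $\GL\times \GL$, so that $A_j := \End_{\GL(V_{\oa})\times \GL(V_{\ob})}(Q_j)$ is a direct sum of walled-Brauer type algebras, which are semisimple under $p\notin \lbr2,r\rbr$. A local inspection of generators shows that the action of $\cR^p_r$ on $Q_j$ factors through $A_j$, and a direct-summand argument analogous to the one in Lemma~\ref{LemOGL} identifies $A_j$ as a projective right $\cR^p_r$-module. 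Combining projectivity with semisimplicity of $A_j$ yields injectivity of each $Q_j$, and hence of $T^r$, as an $\cR^p_r$-module. By self-duality $T^r$ is then a tilting $\cA_r^{\cc}$-module; completeness follows once one knows that $\cS^p_r(V)$-mod has $|\Lambda_r|$ simple objects, which the detailed description of $\Rep^{(r)}\Pe(V)$ announced in \cite{EnS} provides (at least over $\mC$).

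The principal obstacle is the first step: in contrast to the Brauer and walled-Brauer cases, the quasi-hereditary structure of $\cA_r^{\cc}$ from \cite{PB1} is not manifestly of induced form from a cap-free subcategory, and because the form on $V$ is odd the natural duality is likely to differ from the diagrammatic reflection by a parity-twist, so the existence of a good duality making $T^r$ self-dual is not automatic. A secondary technical complication is the bookkeeping of super signs in the diagrammatic identification of $A_j$, since the supercategory structure of $\cA$ introduces signs absent from the $\OSp$ situation treated in Theorem~\ref{ThmTilt}; in positive characteristic one must also be careful that the relevant stable-range fundamental theorems of invariant theory for $\GL\times \GL$ survive.
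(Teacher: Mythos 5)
The statement you are addressing is a \emph{conjecture}: the paper does not prove it in general. What the paper does prove is the special case $\mk=\mC$, $n\ge 8r$, and it does so by an argument that runs, as the author says, ``in the opposite direction'' from Theorem~\ref{ThmTilt}. Concretely, the paper cites \cite[Propositions~5.2.1 and~9.2.3]{EnS} for the fact that $\Rep^{(r)}\Pe(V)$ is a highest weight category whose indecomposable tilting modules are exactly the summands of
$$T^r\,=\,\bigoplus_{j\in\JJJ(r)}\Pi^{\frac{r-j}{2}}\,(V^{\otimes j}),$$
then identifies $\End_{\mathfrak{pe}(V)}(T^r)\simeq\cA_r^{\cc}$ via \cite{PB1,Kujawa}, and finally invokes Proposition~\ref{PropRingel}(ii) to pass back and conclude that $\cS^p_r(V)$ is a Ringel dual of $\cA_r^{\cc}$.

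Your proposal, by contrast, tries to transport the mechanism of Theorem~\ref{ThmTilt} (and Lemmas~\ref{LemInjTilt}, \ref{LemOGL}, \ref{LemTilt1}) to the periplectic side: exhibit a cap-free subalgebra $\cR_r^p$, establish periplectic analogues of Proposition~\ref{PropCZ}(iii)--(v), and then reduce tiltingness of $T^r$ to $\cR_r^p$-injectivity via semisimple $\GL\times\GL$ centraliser algebras. This is precisely the route the author does \emph{not} take, and the obstacles you list at the end are not loose ends but the reason the statement is left as a conjecture. The quasi-hereditary structure of $\cA_r^{\cc}$ obtained in \cite{PB1} is not realised through the Borelic-pair mechanism that underlies Proposition~\ref{PropCZ}(iii)--(iv), so the Shapiro-type reduction of Lemma~\ref{LemInjTilt} has no counterpart, and you cannot simply assert it as a ``preliminary task.'' Moreover, the odd form forces the parity twists $\Pi^{(r-j)/2}$ on the summands of $T^r$ (visible in the paper's displayed formula above), and these are essential for $T^r$ to even define a self-dual $\cA_r^{\cc}$-module; your observation $T^r\cong\Pi^r(T^r)^\ast$ for the top tensor power neither supplies these twists nor shows compatibility with a good duality on $\cA_r^{\cc}$, whose existence you also leave unresolved. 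Finally, for completeness you already concede reliance on \cite{EnS} and hence on $\mk=\mC$; at that point the paper's argument, which reads off the tilting theory of $\Rep^{(r)}\Pe(V)$ directly from \cite{EnS}, is shorter and bypasses all the unproven periplectic analogues of Proposition~\ref{PropCZ}. In short: you have sketched the approach that the author explicitly did not pursue, correctly flagged why it does not go through, and have not closed any of those gaps, so this is not a proof of the conjecture nor of the special case the paper does establish.
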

In characteristic zero there there is a proof of the conjecture, which goes in the opposite direction from our above proof for the (walled) Brauer algebra, by using Proposition~\ref{PropRingel}(ii), based on recent results in~\cite{EnS}.
\begin{prop}[Entova-Eizenbud, Serganova]
Conjecture \ref{Conj} is true for $\mk=\mC$ and $n\ge 8r$.
\end{prop}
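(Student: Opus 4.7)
The plan is to mirror the strategy of Theorem~\ref{ThmTilt}, but to run the tilting verification on the super-group side rather than on the periplectic Brauer side, and then to transport the Ringel duality back via Proposition~\ref{PropRingel}(ii). The enabling input in characteristic zero is the recent work of Entova-Eizenbud and Serganova in~\cite{EnS}, which equips $\Rep^{(r)}\Pe(V)\simeq \cS^p_r(V)\text{-mod}$ (Theorem~\ref{ThmP}(i)) with a highest weight structure whose standard, costandard and indecomposable tilting objects are explicitly understood.

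Concretely, I would proceed in four steps. First, I would import from~\cite{EnS} the quasi-heredity of $\cS^p_r(V)$ together with the realisation of its standard modules as subquotients of tensor powers of the natural module $V$. Second, I would show that $T^r := \bigoplus_{j\in \JJJ(r)} V^{\otimes j}$, viewed through Theorem~\ref{ThmP}(ii) as an object of $\cS^p_r(V)\text{-mod}$, is tilting: the standard filtration is inherited from the fact that tensor products of the Weyl-type modules of~\cite{EnS} remain Weyl-filtered, and the costandard filtration follows by self-duality. Third, I would verify completeness, namely that every indecomposable tilting of $\cS^p_r(V)$ arises as a summand of $T^r$. Finally, combining Theorem~\ref{ThmP}(ii) with Lemma~\ref{LemAC} gives $\uEnd_{\cS^p_r(V)}(T^r)\simeq \cA_r^{\cc}$, so that Proposition~\ref{PropRingel}(ii), together with the quasi-heredity of $\cA_r^{\cc}$ established in~\cite{PB1}, will deliver that $\cS^p_r(V)$ is a Ringel dual of $\cA_r^{\cc}$.

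The main obstacle is the completeness step. Unlike the orthosymplectic situation, the periplectic tilting theory is exotic and one cannot transplant the argument of Lemma~\ref{LemTilt1}: there is no available analogue of the subalgebra $\cR_r$ supplying injectivity along a grading filtration of $T^r$, and tilting multiplicities for $\Pe(V)$ are not controlled by the classical characters appearing in Lemma~\ref{LemOGL}. Completeness must therefore be extracted from the combinatorial classification of tilting modules in~\cite{EnS}, and the numerical hypothesis $n\ge 8r$ is precisely the stability range in which that classification certifies that every tilting label of $\Rep^{(r)}\Pe(V)$ is realised by an indecomposable summand of some $V^{\otimes j}$ with $j\in \JJJ(r)$.
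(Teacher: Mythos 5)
Your proposal follows the same overall route as the paper: use Theorem~\ref{ThmP} to identify $\cS^p_r(V)\text{-mod}$ with $\Rep^{(r)}\Pe(V)$, quote the Entova-Eizenbud--Serganova results for the highest weight structure, show that the tensor module is a complete tilting module, identify its endomorphism algebra with $\cA^{\cc}_r$, and conclude via Proposition~\ref{PropRingel}(ii). So the strategy is sound.

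That said, you take a detour in step two that the paper avoids, and the detour contains a gap. You propose to certify that $T^r$ is tilting by arguing (a) that tensor products of Weyl-type modules remain $\Delta$-filtered and (b) self-duality, and then to treat \emph{completeness} as a separate combinatorial step. But (a) is a nontrivial ``Donkin--Mathieu--Wang'' type statement whose validity for $\Pe(V)$ is not established in the paper nor obviously available, and you give no argument for it. The paper sidesteps this entirely: \cite[Propositions~5.2.1 and~9.2.3]{EnS} are cited as already proving \emph{both} that $\Rep^{(r)}\Pe(V)$ is a highest weight category \emph{and} that the indecomposable tilting modules are precisely the indecomposable direct summands of $T^r$. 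That single citation delivers tilting-ness and completeness simultaneously, making your step-two reconstruction unnecessary and, as formulated, not justified. You should either cite the classification of tilting objects in~\cite{EnS} directly for step two as well, or supply a proof of the Weyl-filtration claim.

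Two smaller points. First, the paper's $T^r$ in this proof carries parity shifts, $T^r=\bigoplus_{j\in\JJJ(r)}\Pi^{(r-j)/2}V^{\otimes j}$, which are needed to match the indecomposable tilting objects of $\Rep^{(r)}\Pe(V)$ on the nose (they do not affect $\uEnd$, so your final isomorphism is unaffected, but the intermediate tilting statement is cleaner with them). Second, the identification $\uEnd_{\mathfrak{pe}(V)}(T^r)\simeq\cA^{\cc}_r$ is not a formal consequence of Theorem~\ref{ThmP}(ii) and Lemma~\ref{LemAC} alone: one needs a double-centralizer/fullness input, which the paper supplies by citing \cite[Proposition~8.3.3(i), Theorem~7.3.1(ii)]{PB1} and \cite[Theorem~5.4.2(ii)]{Kujawa}. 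You should include such a citation rather than treating the identification as automatic.
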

\begin{proof}
In \cite[Propositions~5.2.1 and~9.2.3]{EnS} it is proved that $\Rep^{(r)}\Pe(V)$ is a highest weight category where the tilting modules are precisely the direct summands of 
$$T^r\,=\,\bigoplus_{j\in \JJJ(r)}\Pi^{\frac{r-j}{2}} (V^{\otimes j}).$$ The Ringel dual of $\Rep^{(r)}\Pe(V)$ is thus $\End_{\mathfrak{pe}(V)}(T^r)$. The latter algebra is precisely $\cA^{\cc}_r$, see e.g. \cite[Proposition~8.3.3(i) and Theorem~7.3.1(ii)]{PB1} or \cite[Theorem~5.4.2(ii)]{Kujawa}.
\end{proof}


\section{Applications to super groups and Lie algebras}\label{SecApp}
We list some consequences of our results on Brauer algebras for the representation theory of super groups. Some results are extensions of known results from characteristic zero to positive characteristic, but with entirely different proofs.

\subsection{The orthosymplectic super group}
Resume the notation of Subsection~\ref{SchurO}. In particular $V$ has dimension $(m|2n)$ and is equipped with an even form $\langle\cdot,\cdot\rangle$.
\begin{thm}
Assume $p\not\in\lbr2,r\rbr$ and $\min(m,n)\ge r$. Then $\Rep^{(r)}\OSp(V)$ is a highest weight category for partial order $\xi<\eta$ if and only if $|\xi|< |\eta|$ with $|\xi|=|\lambda|$ for $\xi=\sum_{i=1}^n\lambda_i\delta_i$, with $\lambda\in\Lambda_r$. Furthermore, $\Rep^{(r)}\OSp(V)$ does not depend on $V$, up to equivalence.
\end{thm}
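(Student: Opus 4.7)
The plan is to derive the theorem by combining the identification $\Rep^{(r)}\OSp(V) \simeq \cS_r^o(V)\text{-mod}$ from Theorem~\ref{ThmOSp}(ii), the tilting statement of Theorem~\ref{ThmTilt}, the description of the quasi-hereditary structure of $\cB_r^{\cc}(\delta)$ in Proposition~\ref{PropCZ}(iv), and the standard Ringel-duality facts collected in Appendix~\ref{AppRingel}.

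First, I would invoke Theorem~\ref{ThmTilt}: under the hypotheses $p\not\in\lbr 2,r\rbr$ and $\min(m,n)\ge r$, the module $T^r$ is a complete tilting module of $\cB_r^{\cc}(\delta)$, so $\cS_r^o(V)$ is a Ringel dual of $\cB_r^{\cc}(\delta)$. By the general machinery of Ringel duality, $\cS_r^o(V)$ is itself quasi-hereditary, with the same labelling set as $\cB_r^{\cc}(\delta)$ but with the opposite partial order. Combining this with Proposition~\ref{PropCZ}(iv), $\cS_r^o(V)$ is quasi-hereditary with labels $\Lambda_r$ and partial order $\lambda<\mu$ iff $|\lambda|<|\mu|$.

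Next I transport this structure across the equivalence $\cS_r^o(V)\text{-mod}\stackrel{\sim}{\to}\Rep^{(r)}\OSp(V)$ of Theorem~\ref{ThmOSp}(ii), so that $\Rep^{(r)}\OSp(V)$ inherits a highest weight category structure. Theorem~\ref{ThmOSp}(iii) identifies its simple objects as the $L_{\GG}(\xi)$ with $\xi=\sum_{i=1}^r\lambda_i\delta_i$, $\lambda\in\Lambda_r$; since $|\xi|$ is by definition $|\lambda|$, this bijection converts the order $|\lambda|<|\mu|$ into the stated order $|\xi|<|\eta|$.

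Finally, for the independence statement, the key observation is that $\cB_r^{\cc}(\delta)$ depends only on $\delta=\sdim V\in\mI$, not on the individual choice of $V$. Its Ringel dual is then uniquely determined up to Morita equivalence, so $\Rep^{(r)}\OSp(V)\simeq\cS_r^o(V)\text{-mod}$ depends, up to equivalence of abelian categories, only on $\delta$ within the range $\min(m,n)\ge r$. The one point requiring real care is the bookkeeping of labels and partial orders through the two successive equivalences (Ringel duality and the Schur/coalgebra equivalence of Theorem~\ref{ThmOSp}(ii)); once this is set up, every ingredient is already in place in the paper.
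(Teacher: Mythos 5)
Your proposal reproduces the paper's own argument: reduce via Theorem~\ref{ThmOSp}(ii) to $\cS_r^o(V)$-mod, apply Theorem~\ref{ThmTilt} to identify $\cS_r^o(V)$ with the Ringel dual of $\cB_r^{\cc}(\delta)$, and then combine Proposition~\ref{PropRingel}(i) with Proposition~\ref{PropCZ}(iv) to get the highest weight structure and the reversed order. Two small points of bookkeeping, both already handled implicitly in what you wrote but worth flagging: first, $\cS_r^o(V)\simeq\uEnd_{\cB_r^{\cc}}(T^r)$ is, strictly, the Ringel dual of $\cB_r^{\cc}(\delta)^{\op}$; the identification with a Ringel dual of $\cB_r^{\cc}(\delta)$ itself uses the good (order-preserving) anti-automorphism of Proposition~\ref{PropCZ}(v), which the paper invokes at exactly this point. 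Second, what your argument actually establishes for the last sentence is that the equivalence class of $\Rep^{(r)}\OSp(V)$ depends only on $\delta=\sdim V$ (and $r,p$), not on the particular pair $(m,n)$ realising that $\delta$ within the range $\min(m,n)\ge r$; this is indeed the correct reading of the theorem's "does not depend on $V$'', and you state it more precisely than the theorem does. Aside from these clarifications the proof is the same as the paper's.
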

\begin{proof}
By Theorems~\ref{ThmOSp}(ii), the statements are about $\cS^o_r(V)$-mod. By Theorem~\ref{ThmTilt}, the algebra $\cS^o_r(V)$ is Ringel dual to $\cB_r^{\cc}(\delta)^{\op}\simeq \cB_r^{\cc}(\delta)$. 
This results thus follow from Propositions~\ref{PropRingel}(i) and~\ref{PropCZ}(iv).
\end{proof}

\begin{thm}\label{ConseqFT}
Assume $p\not\in\lbr2,r\rbr$ and $\min(m,n)\ge r$ and set $\delta=m-2n$. 
\begin{enumerate}[(i)]
\item We have an algebra isomorphism $$\cB_r(\delta)\;\stackrel{\sim}{\to}\; \uEnd_{\OSp(V)}(V^{\otimes r}).$$
\item The space of $\OSp(V)$-invariant multilinear forms $V^{\times 2r}\to \mk$ is spanned by all transforms under the symmetric group of 
$$v_1,v_2,\cdots,v_{2r}\;\mapsto\; \langle v_1, v_{2}\rangle\cdots\langle v_{2r-1},v_{2r}\rangle.$$
\end{enumerate}
\end{thm}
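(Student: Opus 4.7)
The plan is to deduce part (i) as a consequence of the Ringel duality between $\cB_r^{\cc}(\delta)$ and $\cS^o_r(V)$ established in Theorem~\ref{ThmTilt}, via the standard double centraliser property for complete tilting modules, and then to reduce part (ii) to (i) using the self-duality $V\simeq V^\ast$ induced by the even form.

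For part (i), I would begin with the natural algebra morphism $\cB_r(\delta)\to \uEnd_\GG(V^{\otimes r})$ coming from the monoidal functor~\eqref{UnivO}, where $\GG=\OSp(V)$. By Theorem~\ref{ThmOSp}(ii) the target equals $\uEnd_{\cS^o_r(V)}(V^{\otimes r})$. By Theorem~\ref{ThmTilt} the module $T^r$ is a complete tilting module of the quasi-hereditary algebra $\cB_r^{\cc}(\delta)$, with endomorphism algebra $\cS^o_r(V)$. Applying the double centraliser property for complete tilting modules (Appendix~\ref{AppRingel}) together with the self-duality $*$ from Proposition~\ref{PropCZ}(v) yields an abstract isomorphism $\cB_r^{\cc}(\delta)\simeq \uEnd_{\cS^o_r(V)}(T^r)$. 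Restricting through the idempotent $e$ corresponding to the object $[r]$, for which $eT^r=V^{\otimes r}$ and $e\cB_r^{\cc}(\delta)e=\cB_r(\delta)$, gives an abstract isomorphism $\cB_r(\delta)\simeq \uEnd_{\cS^o_r(V)}(V^{\otimes r})$. Since the image of the natural map is tautologically contained in the centraliser of $\cS^o_r(V)$ in $\End_\mk(V^{\otimes r})$, dimension counting against the previous abstract isomorphism forces the natural morphism itself to be both injective and surjective.

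For part (ii), $\OSp(V)$-invariant multilinear forms $V^{\times 2r}\to \mk$ are the same as $\Hom_\GG(V^{\otimes 2r},\mk)$, and the even non-degenerate form gives a $\GG$-equivariant isomorphism $V\simeq V^\ast$, hence $\Hom_\GG(V^{\otimes 2r},\mk)\simeq \uEnd_\GG(V^{\otimes r})$. In the Brauer category this corresponds to bending $r$ of the strands from bottom to top: it identifies the basis of pairings of $2r$ dots spanning $\cB(\delta)([2r],[0])$ with the basis of $\cB_r(\delta)$. Under the universal functor, a pairing of $2r$ dots is sent to the multilinear form $(v_1,\dots,v_{2r})\mapsto \prod_{(i,j)}\langle v_i,v_j\rangle$, where the product runs over the paired indices. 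By part (i) these forms span all invariants; since $\SG_{2r}$ acts transitively on the set of pairings of $2r$ dots with orbit representative the standard consecutive pairing $\{(1,2),(3,4),\dots,(2r-1,2r)\}$, every invariant form is a symmetric group transform of the form displayed in the statement.

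The main obstacle will be carefully matching the abstract isomorphism produced by the double centraliser property with the natural functorial map, and tracking the opposite algebras and super structures introduced by the anti-involution $*$ and the braiding. Once this bookkeeping is handled, the remainder is formal.
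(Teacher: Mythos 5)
Your proposal follows essentially the same route as the paper: the paper proves part (i) directly from Proposition~\ref{PropRingel}(iii) and Theorem~\ref{ThmTilt} (giving $\cB_r^{\cc}(\delta)\stackrel{\sim}{\to}\uEnd_{\cS^o_r(V)}(T^r)\stackrel{\sim}{\to}\uEnd_{\OSp(V)}(T^r)$ and then cutting by the idempotent for $[r]$), and obtains part (ii) from the adjunction $\uEnd(V^{\otimes r})\simeq\uHom(V^{\otimes 2r},\mk)$. One minor remark: the dimension-counting step at the end of your part (i) is not by itself sufficient to get injectivity of the natural map (an image inside a space of the right dimension could still have a kernel); but this is a non-issue because Proposition~\ref{PropRingel}(iii) already asserts that the isomorphism $A\stackrel{\sim}{\to}\End_{R(A)}(T)$ \emph{is} the restriction of the natural morphism $A\to\End_{\mk}(T)$, so no abstract-versus-natural matching is needed, and the invocation of the duality from Proposition~\ref{PropCZ}(v) is likewise unnecessary for this step.
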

\begin{proof}
By Proposition~\ref{PropRingel}(iii) and Theorem~\ref{ThmTilt} we have algebra isomorphisms
$$\cB_r^{\cc}(\delta)\;\stackrel{\sim}{\to}\;\uEnd_{\cS^o_r(V)}(T^{ r})  \;\stackrel{\sim}{\to}\;\uEnd_{\OSp(V)}(T^{ r}).$$
Part (i) is a restriction of this isomorphism. Part (ii) then follows from applying the isomorphism between $\uEnd(V^{\otimes r})$ and $\uHom(V^{\otimes 2r},\mk)$, see e.g.~\cite{BrCat, Kujawa}.
\end{proof}

\begin{rem}
\begin{enumerate}[(i)]
\item For $\mk=\mC$, it was recently proved in \cite{Yang, Sel} that the isomorphism in Theorem~\ref{ConseqFT}(i) holds more generally if $r<(m+1)(n+1)$. In \cite[Theorem~A]{ES}, the case $\mk=\mC$ and $r\le m+n$ was proved. Our result in positive characteristic seems to be new.
\item For $\mk=\mC$, but without the condition $\min(m,n)\ge r$, Theorem~\ref{ConseqFT}(ii) is the main result of \cite{DLZ}, proved through geometric methods.
\end{enumerate}
\end{rem}

\subsection{The general linear super group}

\begin{thm}\label{CorTiltW}
Assume $p\not\in\lbr2,\max(r,s)\rbr$, $m\ge r+s$ and $n\ge \max(r,s)$. Then $\Rep^{(r,s)}\GL(V)$ is a highest weight category which does not depend on $V$.
\end{thm}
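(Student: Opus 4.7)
The plan is to imitate the proof of the orthosymplectic analogue stated immediately above, with every step having a direct walled-Brauer counterpart already assembled in the paper. First, I would invoke Theorem~\ref{ThmGL}(ii) to replace the assertion about $\Rep^{(r,s)}\GL(V)$ with the corresponding assertion about the module category $\cS^g_{r,s}(V)$-mod. Next, under our hypotheses with $\delta := m-n$, Theorem~\ref{ThmTiltW} applies and identifies $\cS^g_{r,s}(V)$ as a Ringel dual of $\cB^{\cc}_{r,s}(\delta)^{\op}$. The walled analogue of Proposition~\ref{PropCZ}(v), flagged as holding in Section~\ref{DiagCat}, supplies a good duality implementing $\cB^{\cc}_{r,s}(\delta)^{\op} \simeq \cB^{\cc}_{r,s}(\delta)$.

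Second, the walled analogue of Proposition~\ref{PropCZ}(iv) (also noted to hold in the relevant range) tells us that $\cB^{\cc}_{r,s}(\delta)$ is quasi-hereditary, with standard modules indexed by $\Lambda_{r,s}$ under the combinatorial partial order $(\lambda,\mu) < (\lambda',\mu')$ iff $|\lambda|+|\mu| > |\lambda'|+|\mu'|$. Feeding this into Proposition~\ref{PropRingel}(i) (a Ringel dual of a quasi-hereditary algebra is quasi-hereditary, with the partial order reversed) immediately yields that $\cS^g_{r,s}(V)$-mod, and hence $\Rep^{(r,s)}\GL(V)$, is a highest weight category with poset $\Lambda_{r,s}$ under the opposite order.

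For the independence from $V$: the poset $\Lambda_{r,s}$ and its partial order depend only on $r$ and $s$, not on the specific choice of $V$ within our constraints (nor on the resulting $\delta = m-n$). Hence the highest weight structures produced on $\Rep^{(r,s)}\GL(V)$ for different $V$ carry the same abstract combinatorial data. The main obstacle to flag is precisely here: one must check that the walled analogues of Proposition~\ref{PropCZ}(iv)--(v) cited from \cite{Borelic} really deliver both the quasi-hereditary structure with the stated partial order \emph{and} the good duality, and that the Ringel dual is functorial enough that matching combinatorial invariants give an equivalence of highest weight categories independent of the auxiliary $V$. Once these verifications are in place, the proof is otherwise \emph{mutatis mutandis} the orthosymplectic argument.
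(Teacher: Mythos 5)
Your reduction to $\cS^g_{r,s}(V)$-mod via Theorem~\ref{ThmGL}(ii) and the Ringel-duality step via Theorem~\ref{ThmTiltW} and Proposition~\ref{PropRingel}(i) is exactly the paper's route, and your worry about whether \cite{Borelic} supplies the walled analogues of Proposition~\ref{PropCZ}(iv)--(v) is unnecessary: the paper explicitly records (after introducing $\cB_{r,s}^{\cc}$) that the analogues of (ii)--(v) are proved in \cite[\S 8]{Borelic}, so you may cite them without re-verification.

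The gap is in your argument for independence of $V$. Observing that the poset $\Lambda_{r,s}$ with its order is the same for all $V$ does not establish an equivalence of categories; two highest weight categories can share a poset and still be inequivalent. What you actually need is the elementary fact recorded in \S\ref{SecTilt} of the appendix: \emph{all Ringel duals of a fixed quasi-hereditary algebra are Morita equivalent}, because any two complete tilting modules have the same indecomposable summands $T(\lambda)$ up to multiplicity. By Theorem~\ref{ThmTiltW}, for every admissible $V$ the algebra $\cS^g_{r,s}(V)$ is a Ringel dual of $\cB^{\cc}_{r,s}(\delta)$, hence all these $\cS^g_{r,s}(V)$ are Morita equivalent and the categories $\Rep^{(r,s)}\GL(V)$ coincide up to equivalence. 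No ``functoriality of the Ringel dual'' is required. Note also that this argument (and the theorem) should be read with $\delta=m-n$ held fixed: your parenthetical ``(nor on the resulting $\delta=m-n$)'' overreaches, since $\cB^{\cc}_{r,s}(\delta)$ for different $\delta$ are generally not Morita equivalent, so their Ringel duals are not either; Theorem~\ref{ThmTiltW} lists $m-n=\delta$ among its hypotheses precisely to pin this down.
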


\begin{proof}
This is an application of Proposition~\ref{PropRingel}(i) by Theorem~\ref{ThmTiltW}.
\end{proof}

\begin{rem}
If $\mk=\mC$ and $4(r+s)\le \min(m,n)$, the results in Theorem~\ref{CorTiltW} were first proved in \cite[Theorem~7.1.1 and~Corollary~8.5.2]{EHS} through completely different methods.
\end{rem}

\begin{prop}
Assume $p\not\in\lbr2,r\rbr$ and $\min(m,n)\ge r$, then we have an isomorphism $\mk\SG_r\stackrel{\sim}{\to}\uEnd_{\GL(V)}(V^{\otimes r})$.
\end{prop}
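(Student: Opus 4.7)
The plan is to specialise Theorem~\ref{ThmTiltW} to the case $s=0$ and then apply the double-centraliser property for Ringel duality, exactly as was done in Theorem~\ref{ConseqFT}(i) for the orthosymplectic case.

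First I would observe that when $s=0$ the walled Brauer algebra collapses: the set $\Lambda_{r,0}$ only contains pairs with $j=0$ and $\mu=\emptyset$, so both $\cB_{r,0}(\delta)$ and $\cB_{r,0}^{\cc}(\delta)$ coincide with $\mk\SG_r$, and $T^{r,0}=V^{\otimes r}$. Under the hypotheses $p\not\in\lbr 2,r\rbr$ and $\min(m,n)\ge r$, the numerical conditions of Theorem~\ref{ThmTiltW} ($p\not\in\lbr 2,\max(r,s)\rbr$, $m\ge r+s$, $n\ge\max(r,s)$, with $\delta=m-n$) are all satisfied with $s=0$. Hence $V^{\otimes r}$ is a complete tilting module of $\mk\SG_r$, with Ringel dual $\cS^g_{r,0}(V)=\uEnd_{\mk\SG_r}(V^{\otimes r})$.

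Next I would invoke Proposition~\ref{PropRingel}(iii) to recover $\mk\SG_r$ from the double centraliser, namely $\mk\SG_r\stackrel{\sim}{\to}\uEnd_{\cS^g_{r,0}(V)}(V^{\otimes r})^{\op}$, noting that $\mk\SG_r$ is canonically anti-isomorphic to $\mk\SG_r^{\op}$ via $w\mapsto w^{-1}$. Finally I would use Theorem~\ref{ThmGL}(ii), which identifies $\cS^g_{r,0}(V)\text{-mod}$ with the abelian subcategory $\Rep^{(r,0)}\GL(V)$ of subquotients of direct sums of $V^{\otimes r}$; this equivalence sends the object $V^{\otimes r}$ to itself, so
\[
\uEnd_{\cS^g_{r,0}(V)}(V^{\otimes r})\;\simeq\;\uEnd_{\GL(V)}(V^{\otimes r}).
\]
Composing these isomorphisms yields the claimed isomorphism $\mk\SG_r\stackrel{\sim}{\to}\uEnd_{\GL(V)}(V^{\otimes r})$, and by construction it is the natural map induced by the functor \eqref{UnivG}.

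The only mildly subtle point is verifying that the specialisation $s=0$ of Theorem~\ref{ThmTiltW} indeed produces the symmetric group rather than a proper quotient, but this is a direct consequence of the definition of $\Lambda_{r,s}$ and the description of $\cB_{r,s}^{\cc}(\delta)$ via the oriented Brauer category. Everything else is formal from the results already established in Sections~\ref{SecSchur}--\ref{SecRingel} and Appendix~\ref{AppRingel}; no new computation of invariants is required.
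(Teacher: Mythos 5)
Your proof is correct and follows essentially the same route as the paper, which simply cites Theorem~\ref{ConseqFT}(i) ``mutatis mutandis'': specialise Theorem~\ref{ThmTiltW} to $s=0$ (so $\cB^{\cc}_{r,0}(\delta)=\mk\SG_r$ and $T^{r,0}=V^{\otimes r}$), apply the double-centraliser property of Proposition~\ref{PropRingel}(iii), and identify $\uEnd_{\cS^g_{r,0}(V)}(V^{\otimes r})$ with $\uEnd_{\GL(V)}(V^{\otimes r})$. The detour through $\uEnd_{\cS^g_{r,0}(V)}(V^{\otimes r})^{\op}$ and the anti-automorphism $w\mapsto w^{-1}$ is superfluous, since Proposition~\ref{PropRingel}(iii) already gives the isomorphism $A\stackrel{\sim}{\to}\End_{R(A)}(T)$ with no opposite.
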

\begin{proof}
Mutatis mutandis Theorem~\ref{ConseqFT}(i).
\end{proof}

\begin{prop}\label{CorGL}
If $\mk=\mC$, the morphism $U(\mathfrak{gl}(V))\to \uEnd_{\cB_{r,s}}(V^{\otimes r}\otimes W^{\otimes s})$ is surjective.
\end{prop}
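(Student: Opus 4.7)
The plan is to assemble the proposition from ingredients already set up in Sections~\ref{SecCOAL} and~\ref{SecSchur}. Set $\cO := \cO[\GL(V)]$ and $M := V^{\otimes r}\otimes W^{\otimes s}$. By Theorem~\ref{ThmGL}(i) and the lemma immediately following it,
$$\cS^g_{r,s}(V) \;=\; \uEnd_{\cB_{r,s}}(M) \;\simeq\; (\cO^M)^\ast \;=:\; S^M.$$
The commutative diagram~\eqref{CommD} of~\ref{Omod} thus factors the $\mathfrak{gl}(V)$-action on $M$ as
$$U(\mathfrak{gl}(V)) \;=\; \Dist(\cO) \;\to\; S^M \;\hookrightarrow\; \uEnd(M),$$
where the first equality uses Lemma~\ref{LemDist}(i), valid since $\charr(\mk)=0$ and $\Lie\GL(V) = \mathfrak{gl}(V)$. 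Thus the proposition reduces to surjectivity of the natural morphism $\Dist(\cO)\to S^M$.

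Unwinding definitions, this morphism is just the composition of $\Dist(\cO)\hookrightarrow \cO^\ast$ with the restriction map $\cO^\ast\twoheadrightarrow (\cO^M)^\ast$. Since $\cO^M$ is finite-dimensional (it is the image of $M^\ast\otimes M\to\cO$), the dual space $(\cO^M)^\ast$ is finite-dimensional too, and surjectivity onto it is equivalent to the restricted pairing $\Dist(\cO)\times \cO^M\to \mk$ being non-degenerate in the $\cO^M$ slot, i.e., no non-zero $f\in\cO^M$ is annihilated by all of $\Dist(\cO)$. The even part $\GL(V)_{ev}\simeq \GL(V_{\oa})\times \GL(V_{\ob})$ is connected, so Lemma~\ref{LemDist}(ii) provides the non-degeneracy on the whole of $\cO$, hence a fortiori on the subspace $\cO^M\subset\cO$.

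There is essentially no obstacle; the proof is a direct assembly of machinery already in place. The one point requiring care is identifying the morphism in the statement with the composition above, which is built into diagram~\eqref{CommD} once one recalls that the $\Dist(\cO)$-action on a comodule $M$ is given by $u\mapsto (\id_M\otimes u)\circ c_M$, as recorded in~\ref{Omod}, so that its restriction to $\Lie\GL(V)\subset\Dist(\cO)$ is precisely the differentiated $\mathfrak{gl}(V)$-action.
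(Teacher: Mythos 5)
Your proof is correct and follows the paper's own argument essentially step for step: identify $\cS^g_{r,s}(V)\simeq(\cO^M)^\ast$ via the lemma after Theorem~\ref{ThmGL}, pass through diagram~\eqref{CommD}, invoke Lemma~\ref{LemDist}(i) to identify $\Dist(\cO)$ with $U(\mathfrak{gl}(V))$, and use Lemma~\ref{LemDist}(ii) to get surjectivity onto the dual of a finite-dimensional subspace of $\cO$. The only difference is that you spell out the non-degeneracy-of-pairing argument in more detail than the paper does.
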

\begin{proof}
It follows from Lemma~\ref{LemDist}(ii) that the morphism $\Dist(\cO)\to X^\ast$ is surjective for any finite dimensional space $X\subset \cO$. Combined with Lemma~\ref{LemDist}(i) this shows that $U(\mathfrak{gl}(V))\to (\cO^{V^{\otimes r}\otimes W^{\otimes s}})^\ast$ is surjective. The conclusion follows from Theorem~\ref{ThmGL}(i) and diagram~\eqref{CommD}.
\end{proof}

\subsection{The periplectic super group}

The following answers in particular \cite[Question~8.1.6]{PB1}.
\begin{prop}\label{CorP}
If $\mk=\mC$, the morphism $U(\mathfrak{pe}(V))\to \uEnd_{\cA_r}(V^{\otimes r})$ is surjective.
\end{prop}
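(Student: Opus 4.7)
The plan is to adapt the proof of Proposition~\ref{CorGL} verbatim to the periplectic setting, with $\GG = \Pe(V)$ in place of $\GL(V)$. Write $\cO = \cO[\Pe(V)]$ and set $M = V^{\otimes r}$, viewed as an object of $\uRep\GG$.

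First I would check that both parts of Lemma~\ref{LemDist} apply. Part~(i) is available since $\charr(\mk) = 0$, yielding $\Dist(\cO) = U(\Lie\Pe(V)) = U(\mathfrak{pe}(V))$. For part~(ii), one needs $\Pe(V)_{ev}$ to be connected; but from the definition in~\ref{DefGL}--\ref{DefOSp} (just the $p$-case), the even subgroup of $\Pe(V)$ is $\GL(V_{\oa})$, which is a connected algebraic group over $\mC$. Hence the natural pairing $\Dist(\cO)\times \cO \to \mk$ is non-degenerate in the second variable.

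Next I would restrict this pairing to the finite dimensional subcoalgebra $\cO^M \subset \cO$ of~\ref{defCM}. Non-degeneracy on $\cO$ forces non-degeneracy on the finite dimensional subspace $\cO^M$, so the induced algebra morphism $\Dist(\cO) \to (\cO^M)^\ast$ is surjective. By Theorem~\ref{ThmP}(ii) we have algebra isomorphisms
$$(\cO^M)^\ast \;\simeq\; \cS^p_r(V) \;=\; \uEnd_{\cA_r}(V^{\otimes r}),$$
and the commutative diagram~\eqref{CommD} identifies the composition $\Dist(\cO) \to (\cO^M)^\ast \hookrightarrow \uEnd(M)$ with the canonical action map. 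This gives the desired surjection $U(\mathfrak{pe}(V)) \twoheadrightarrow \uEnd_{\cA_r}(V^{\otimes r})$.

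The argument is essentially formal once Theorem~\ref{ThmP}(ii) is available, and there is no serious obstacle. The only substantive input beyond~\ref{CorGL} is the connectedness of $\Pe(V)_{ev}$, needed to invoke Lemma~\ref{LemDist}(ii); everything else, including the identification of $\cS^p_r(V)$ with the dual of the matrix coefficient coalgebra of $V^{\otimes r}$, is already packaged in Section~\ref{SecSchur}.
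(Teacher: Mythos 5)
Your argument is correct and is exactly the "mutatis mutandis" route the paper intends: the only verification beyond the proof of Proposition~\ref{CorGL} is that $\Pe(V)_{ev}\cong\GL(V_{\oa})$ is connected so that Lemma~\ref{LemDist}(ii) applies, and then Theorem~\ref{ThmP}(ii) together with diagram~\eqref{CommD} supplies the identification $(\cO^{V^{\otimes r}})^\ast\simeq\cS^p_r(V)=\uEnd_{\cA_r}(V^{\otimes r})$. This matches the paper's proof.
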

\begin{proof}
Mutatis mutandis Proposition~\ref{CorGL}.\end{proof}

\subsection{The queer super group}

\begin{prop}
If $\mk=\mC$, the morphism $U(\mathfrak{q}(V))\to \uEnd_{\cBC_{r,s}}(V^{\otimes r}\otimes W^{\otimes s})$ is surjective.
\end{prop}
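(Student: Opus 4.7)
The plan is to follow the template of Propositions~\ref{CorGL} and~\ref{CorP} verbatim, adapting the input data to the queer case. First I would set $\GG = \mathsf{Q}(V)$ and $\cO = \cO[\mathsf{Q}(V)]$, and take $M = V^{\otimes r}\otimes W^{\otimes s}$ as the relevant finite dimensional $\GG$-module. The finite dimensional subcoalgebra $\cO^M \subset \cO$ is defined as in \ref{defCM}, and the aim is to show that the composition $U(\mathfrak{q}(V)) \to \Dist(\cO) \to (\cO^M)^\ast$ is surjective, since by Theorem~\ref{ThmQ}(i) and the commutative diagram~\eqref{CommD} this target is exactly $\uEnd_{\cBC_{r,s}}(V^{\otimes r} \otimes W^{\otimes s})$.

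Next I would verify the hypothesis of Lemma~\ref{LemDist}(ii), namely that $\mathsf{Q}(V)_{ev}$ is connected. This is immediate: the even part of $\mathsf{Q}(V)$ is the stabiliser in $\GL(V)$ of the odd automorphism $q$, and is isomorphic to $\GL(V_{\oa}) \simeq \GL_n$ (via the diagonal embedding compatible with $q$), which is connected. Consequently Lemma~\ref{LemDist}(ii) applies: the natural pairing $\Dist(\cO)\times \cO \to \mk$ is non-degenerate in the first argument, which by duality and finite dimensionality of $\cO^M$ yields surjectivity of the restriction map $\Dist(\cO)\to (\cO^M)^\ast$.

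Then, since $\mk = \mC$, Lemma~\ref{LemDist}(i) identifies $\Dist(\cO)$ with $U(\Lie \mathsf{Q}(V)) = U(\mathfrak{q}(V))$, so the map $U(\mathfrak{q}(V)) \to (\cO^M)^\ast$ is surjective. Finally, applying Theorem~\ref{ThmQ}(i) to rewrite the target as $\cS^q_{r,s}(V) = \uEnd_{\cBC_{r,s}}(V^{\otimes r}\otimes W^{\otimes s})$, and noting via diagram~\eqref{CommD} that this surjection is precisely the canonical action map of $U(\mathfrak{q}(V))$ on $M$ corestricted to its centraliser algebra, completes the proof.

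There is no serious obstacle here; the only point that requires brief verification beyond quoting the general linear proof is the connectedness of $\mathsf{Q}(V)_{ev}$, which is standard. Everything else is a mechanical transcription of Proposition~\ref{CorGL}, with Theorem~\ref{ThmGL}(i) replaced by Theorem~\ref{ThmQ}(i) and $\mathfrak{gl}(V)$ replaced by $\mathfrak{q}(V)$.
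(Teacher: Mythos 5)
Your proposal is correct and follows the same route as the paper, which literally gives the proof as ``Mutatis mutandis Proposition~\ref{CorGL}.'' The only addition you make beyond a mechanical transcription is the explicit verification that $\mathsf{Q}(V)_{ev}\simeq\GL_n$ is connected, which the paper leaves implicit but which is indeed the hypothesis needed to invoke Lemma~\ref{LemDist}(ii).
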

\begin{proof}
Mutatis mutandis Proposition~\ref{CorGL}.\end{proof}


\appendix

\section{Ringel duality}\label{AppRingel}
In this section we recall some results from \cite{CPS, Ringel}. We work in $\vvec$, not in $\svec$ or $\VVec$.

\subsection{Definitions}

Consider a (finite dimensional unital associative) algebra~$A$ over $\mk$. 
We label the isoclasses of simple left $A$-modules by the (finite) set~$\Lambda=\Lambda_A$. When we consider a partial order $\le$ on~$\Lambda_A$, we will write $(A,\le)$. 
The simple modules are denoted by~$\{L(\lambda)\,|\,\lambda\in\Lambda\}$ and their respective projective covers and injective envelopes in $A$-mod by~$P(\lambda)$ and $I(\lambda)$.

\begin{ddef}\label{DefQH}
An algebra~$(A,\le)$ is {\bf quasi-hereditary} if there exist~$\{\Delta(\lambda)\,|\, \lambda\in \Lambda\}$ in $A$-mod such that, for all $\lambda,\mu\in\Lambda$:
\begin{enumerate}[(a)]
\item $[\Delta(\lambda):L(\lambda)]=1$;
\item $[\Delta(\lambda):L(\mu)]=0$ unless~$\mu\le\lambda$;
\item there is an epimorphism $P(\lambda)\tto \Delta(\lambda)$ where the kernel has a filtration with each quotient of the form $\Delta(\nu)$, with $\nu> \lambda$.
\end{enumerate}
\end{ddef}
Quasi-heredity is thus a property of the Morita class of an algebra. The conditions in the above definition actually state that $A$-mod is a {\bf highest weight category}. We now fix a quasi-hereditary algebra $(A,\le)$ for the remainder of this appendix.

\subsubsection{}\label{DefGood}It follows that for each $\lambda\in\Lambda$, the standard module $\Delta(\lambda)$ is the maximal quotient of $P(\lambda)$ for which $[\Delta(\lambda):L(\mu)]=0$ unless~$\mu\le\lambda$. We define dually the costandard module $\nabla(\lambda)$ as the maximal submodule of $I(\lambda)$ for which $[\nabla(\lambda):L(\mu)]=0$ unless~$\mu\le\lambda$.

Assume that $A$ has an anti-automorphism $\varphi$. We have the corresponding contragredient equivalence
$$\dd:\; A\mbox{-mod}\; \stackrel{\sim}{\to}\; A\mbox{-mod}, \quad M\mapsto (M^\ast)_\varphi.$$
This induces a bijection $\lambda\mapsto \lambda^\varphi$ on $\Lambda$ such that $\dd L(\lambda)\simeq L(\lambda^\varphi)$. If $(\cdot)^\varphi$ induces an automorphism of the poset $(\Lambda,\le)$, we refer to $\dd$ as a {\bf good duality}. In particular, we then have $\dd\Delta(\lambda)\simeq \nabla(\lambda^\varphi)$.

\subsubsection{}\label{SecTilt} 
A filtration of a module where each quotient is a standard, resp. costandard, module is referred to as a $\Delta$-flag, resp. $\nabla$-flag.
We refer to modules which admit a both a $\Delta$-flag and a $\nabla$-flag as {\bf tilting} modules. By \cite[Section~5]{Ringel}, every direct summand of a tilting module is again a tilting module and there exist precisely $|\Lambda|$ indecomposable tilting modules up to isomorphism. We label them as $\{T(\lambda)\,|\,\lambda\in\Lambda\}$, where $T(\lambda)$ has $\Delta(\lambda)$ as a submodule for which the quotient has a $\Delta$-flag. We say that a tilting module is {\bf complete} if each $T(\lambda)$ appears as a direct summand.

An algebra of the form
$\End_A(T)^{\op},$
for a complete tilting module $T$ of $A$, is a {\bf Ringel dual} of $A$. All Ringel duals of $A$ are clearly Morita equivalent.

\subsection{Properties}

The following is an immediate consequence of \cite[Theorem~4]{Ringel}.
\begin{lemma}\label{lemtilt}
Let $(A,\le)$ be a quasi-hereditary algebra with good duality $\dd$. If an $A$-module $M$ satisfies $M\simeq \dd M$ and
$$\Ext^1_A(\Delta(\lambda),M)=0,\qquad\mbox{for all $\lambda\in\Lambda$},$$
then it is a tilting module.
\end{lemma}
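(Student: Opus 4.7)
The plan is to deduce this directly from Ringel's criterion for the existence of $\nabla$-flags, then use the good duality to upgrade to a $\Delta$-flag as well, which together make $M$ tilting by definition.

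First, I would invoke the standard consequence of Ringel's Theorem~4 (as cited in the excerpt): for a quasi-hereditary algebra, an $A$-module $N$ admits a $\nabla$-flag if and only if $\Ext^1_A(\Delta(\lambda),N)=0$ for every $\lambda\in\Lambda$. (The "only if" direction is standard from the fact that $\Ext^i_A(\Delta(\lambda),\nabla(\mu))=0$ for $i\ge 1$; the "if" direction is the content of Ringel's criterion.) Applying this to our $M$, the hypothesis $\Ext^1_A(\Delta(\lambda),M)=0$ for all $\lambda$ gives that $M$ has a $\nabla$-flag.

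Next, I would exploit the good duality $\dd$. Since $\dd$ is an exact contravariant equivalence of $A\mbox{-mod}$ to itself that, by \ref{DefGood}, sends $\Delta(\lambda)$ to $\nabla(\lambda^\varphi)$ (and hence, by the same reasoning applied to $\dd^{-1}=\dd$, sends $\nabla(\mu)$ to $\Delta(\mu^\varphi)$), the functor $\dd$ carries $\nabla$-flags to $\Delta$-flags and vice versa. Thus $\dd M$ admits a $\Delta$-flag. But $M\simeq \dd M$ by hypothesis, so $M$ itself admits a $\Delta$-flag.

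Combining the two paragraphs, $M$ has both a $\Delta$-flag and a $\nabla$-flag, which is exactly the definition of a tilting module recalled in \ref{SecTilt}. There is no real obstacle here: the only nontrivial input is Ringel's $\nabla$-flag criterion, which is the "Theorem~4" referenced in the sentence preceding the lemma, and the bookkeeping that $\dd$ exchanges the two classes of flags, which is immediate from $\dd\Delta(\lambda)\simeq\nabla(\lambda^\varphi)$ and exactness of $\dd$.
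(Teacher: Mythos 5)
Your proof is correct and is precisely the argument the paper has in mind: the paper's entire proof is the one-line remark that the lemma is ``an immediate consequence of \cite[Theorem~4]{Ringel}'', and you have simply spelled out the two steps (Ringel's $\Ext^1$-criterion gives a $\nabla$-flag; the good duality together with $M\simeq\dd M$ then gives the $\Delta$-flag). One tiny caveat: you justify $\dd\nabla(\mu)\simeq\Delta(\mu^\varphi)$ by asserting $\dd^{-1}=\dd$, which need not hold for an arbitrary anti-automorphism $\varphi$; it is cleaner to observe directly that $\dd$ swaps $I(\mu)$ with $P(\mu^\varphi)$ and preserves the order condition on composition factors, whence $\dd\nabla(\mu)$ is the maximal quotient of $P(\mu^\varphi)$ with factors $\le\mu^\varphi$, i.e.\ $\Delta(\mu^\varphi)$.
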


\begin{prop}[Ringel]\label{PropRingel}
Let $(A,\le)$ be quasi-hereditary with complete tilting module $T$ and set $R(A):=\End_{A}(T)$. 
\begin{enumerate}[(i)]
\item We have a bijection $\Lambda_A\stackrel{\sim}{\to}\Lambda_{R(A)^{\op}}$, denoted by $\lambda\mapsto \widetilde{\lambda}$, where $P(\widetilde{\lambda})\simeq\Hom_A(T,T(\lambda))$. Then $(R(A)^{\op},\le)$ is quasi-hereditary, with $\widetilde{\lambda}\le \widetilde{\mu}$ if and only if $\mu\le\lambda$.
\item The original $(A,\le)$ is a Ringel dual of $(R(A)^{\op},\le)$.
\item The morphism $A\to\End_{\mk}(T)$ yields an isomorphism
$A\stackrel{\sim}{\to}\End_{R(A)}(T).$
\end{enumerate}
\end{prop}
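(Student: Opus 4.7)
The plan is to use the functor $F:=\Hom_A(T,-)\colon A\text{-mod}\to R(A)^{\op}\text{-mod}$ as the main tool connecting $A$ and its Ringel dual. The foundational technical step is to establish $\Ext^i_A(T,\nabla(\mu))=0$ for all $i>0$ and $\mu\in\Lambda$, which follows from the standard quasi-hereditary vanishing $\Ext^i_A(\Delta(\lambda),\nabla(\mu))=0$ by dimension-shifting along a $\Delta$-flag of $T$. Consequently $F$ is exact on the full subcategory $\mathcal{F}(\nabla)\subset A\text{-mod}$ of $\nabla$-filtered modules, and $T$ is a projective generator there.

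For part (i), write $T=\bigoplus_\lambda T(\lambda)^{m_\lambda}$ with each $m_\lambda\ge 1$. Then the modules $F(T(\lambda))$ form a complete set of indecomposable projective $R(A)^{\op}$-modules, whose simple tops I label $\widetilde{L}(\widetilde{\lambda})$ to fix the asserted bijection and the identification $P_{R(A)^{\op}}(\widetilde{\lambda})=F(T(\lambda))$. I set $\widetilde{\Delta}(\widetilde{\lambda}):=F(\nabla(\lambda))$ as candidate standard. Applying $F$ to the distinguished short exact sequence $0\to K\to T(\lambda)\to\nabla(\lambda)\to 0$, where $K$ is $\nabla$-filtered with factors $\nabla(\mu)$ for $\mu$ strictly above $\lambda$, yields a surjection $P_{R(A)^{\op}}(\widetilde{\lambda})\twoheadrightarrow\widetilde{\Delta}(\widetilde{\lambda})$ with kernel filtered by $\widetilde{\Delta}(\widetilde{\mu})$ for $\widetilde{\mu}>\widetilde{\lambda}$ in the reversed order, proving axiom (c) of Definition~\ref{DefQH}. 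Axioms (a)--(b) then follow by a composition-factor count, using the Hom-computation $\dim\Hom_A(T(\mu),\nabla(\lambda))=(T(\mu):\Delta(\lambda))=[\nabla(\lambda):L(\mu)]$ from Brauer--Humphreys reciprocity together with the identification of the simple top of $\widetilde{\Delta}(\widetilde{\lambda})$.

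For part (ii), I would verify that $T$, viewed now as a left $R(A)^{\op}$-module, is itself a complete tilting module: it is manifestly $\widetilde{\Delta}$-filtered because it is a direct sum of $P_{R(A)^{\op}}(\widetilde{\lambda})$'s, and the $\Delta$-flag of each $T(\lambda)$ (starting with the submodule $\Delta(\lambda)$ and continuing with $\Delta(\mu)$ for $\mu$ above $\lambda$) is carried by $F$ to a $\widetilde{\nabla}$-flag of $F(T(\lambda))$ via the dual identification $F(\Delta(\mu))\simeq\widetilde{\nabla}(\widetilde{\mu})$; completeness is automatic from the count of indecomposable summands. Part (iii) is then the standard double-centraliser consequence: $F$ restricts to an equivalence of exact categories $\mathcal{F}(\nabla)_A\simeq\mathcal{F}(\Delta)_{R(A)^{\op}}$ whose quasi-inverse is $\Hom_{R(A)^{\op}}(T,-)$, and taking endomorphism rings of $T$ on both sides gives the required isomorphism $A\stackrel{\sim}{\to}\End_{R(A)}(T)$.

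The principal obstacle will be navigating the order-reversal bookkeeping in part (i): one must carefully match the direction ``strictly above $\lambda$'' in the $\nabla$-flag of $T(\lambda)$ against the opposite direction ``strictly above $\widetilde{\lambda}$'' in the axiom for $P_{R(A)^{\op}}(\widetilde{\lambda})$, and verify that the labelling of simple tops is consistent via Brauer--Humphreys reciprocity with both the $\Delta$- and $\nabla$-filtration pictures. Once this combinatorial correspondence is firmly in place, the remaining checks (exactness of $F$ on $\mathcal{F}(\nabla)$, locality of $\End_A(T(\lambda))$ needed for the simple tops, and the double-centraliser identity in (iii)) are routine.
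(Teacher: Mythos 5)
The paper disposes of all three parts by citing Ringel's original 1991 paper (Theorems 5--7 there), with only a short dimension-count supplement for part~(iii). Your proposal instead reproves Ringel's theorem from scratch via the functor $F=\Hom_A(T,-)$. That is the standard modern route and is fine in outline, but two points need repair before the write-up would stand on its own.

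First, a sign error in the order bookkeeping, which you yourself flag as the principal obstacle. In the exact sequence $0\to K\to T(\lambda)\to\nabla(\lambda)\to0$ the kernel $K$ has a $\nabla$-flag with factors $\nabla(\mu)$ for $\mu$ strictly \emph{below} $\lambda$ in the original order, not strictly above (dually, $T(\lambda)/\Delta(\lambda)$ has $\Delta$-factors $\Delta(\mu)$ with $\mu<\lambda$). Only with this corrected direction does the reversed order give $\widetilde\mu>\widetilde\lambda$, which is what axiom (c) of Definition~\ref{DefQH} requires for $P(\widetilde\lambda)\twoheadrightarrow\widetilde\Delta(\widetilde\lambda)$. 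As written your two sentences contradict each other: ``$\mu$ strictly above $\lambda$'' reverses to $\widetilde\mu<\widetilde\lambda$, not $\widetilde\mu>\widetilde\lambda$.

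Second, and more substantively, the argument you give for part~(iii) does not produce what you claim. You invoke the equivalence $F\colon\mathcal{F}(\nabla)_A\stackrel{\sim}{\to}\mathcal{F}(\widetilde\Delta)_{R(A)^{\op}}$ and propose to ``take endomorphism rings of $T$ on both sides.'' But $F(T)=\Hom_A(T,T)=R(A)$ is the regular $R(A)^{\op}$-module, so comparing endomorphism rings of $T$ and $F(T)$ just returns the tautology $R(A)\simeq\End_{R(A)^{\op}}(R(A))$, not the double-centraliser isomorphism $A\simeq\End_{R(A)}(T)$. Moreover $A$ itself lies in $\mathcal{F}(\Delta)_A$, not $\mathcal{F}(\nabla)_A$, so it is not even an object of the category on which your equivalence is defined. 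The correct route (essentially Ringel's Theorems 5 and 7, which the paper cites) is: since $A\in\mathcal{F}(\Delta)$, there is an exact sequence $0\to A\to T^0\to T^1$ with $T^0,T^1\in\add(T)$; one applies a suitable Hom-functor and uses that $T$ is faithful over $A$ to get injectivity of $A\to\End_{R(A)}(T)$, and then matches dimensions (the argument sketched in the paper), or one exploits the left-exact sequence directly. As it stands your justification of (iii) has a genuine gap.

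Everything else --- the $\Ext$-vanishing $\Ext^i_A(T,\nabla(\mu))=0$, exactness of $F$ on $\mathcal{F}(\nabla)$, the labelling via $P(\widetilde\lambda)=F(T(\lambda))$ and $\widetilde\Delta(\widetilde\lambda)=F(\nabla(\lambda))$, the use of Brauer--Humphreys reciprocity for the multiplicity check, and the observation that $\End_A(T(\lambda))$ must be local to identify indecomposable projectives --- is sound and is indeed the content of Ringel's proof, so you gain a self-contained argument at the cost of roughly a page of verification that the paper simply outsources.
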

\begin{proof}
Part (i) follows from \cite[Theorem~6]{Ringel}.
Part (ii) follows from \cite[Theorem~7]{Ringel}.
Since $T$ is a faithful $A$-module, see \cite[Theorem~5]{Ringel}, to prove part (iii) it suffices to show that $\End_{R(A)}(T)$ has the same dimension as $A$. The latter follows as in the proof of \cite[Theorem~7]{Ringel}.\end{proof}

\subsection*{Acknowledgement}
The research was supported by ARC grants DE170100623 and DP180102563. The author thanks Sigiswald Barbier for useful comments on the first version of the manuscript.

\begin{flushleft}
	K. Coulembier\qquad \url{kevin.coulembier@sydney.edu.au}
	
	School of Mathematics and Statistics, University of Sydney, NSW 2006, Australia
	
	\end{flushleft}

\end{document}